\newcommand{\newpara}{\vspace{1ex}}
\theoremstyle{theorem}
  \newtheorem{theorem}{Theorem}[section]
  \newtheorem{proposition}[theorem]{Proposition}
  \newtheorem{lemma}[theorem]{Lemma}
  \newtheorem{corollary}[theorem]{Corollary}
\theoremstyle{definition}
  \newtheorem{definition}[theorem]{Definition}
  \newtheorem{example}[theorem]{Example}
  \newtheorem{remark}[theorem]{Remark}
  \newtheorem{notation}[theorem]{Notation}
\newcommand{\bbA}{\mathbb{A}}
\newcommand{\bbB}{\mathbb{B}}
\newcommand{\bbN}{\mathbb{N}}
\newcommand{\bbR}{\mathbb{R}}
\newcommand{\bbS}{\mathbb{S}}
\newcommand{\bbP}{\mathbb{P}}
\newcommand{\bbH}{\mathbb{H}}
\newcommand{\calB}{\mathcal{B}}
\newcommand{\calC}{\mathcal{C}}
\newcommand{\calF}{\mathcal{F}}
\newcommand{\calG}{\mathcal{G}}
\newcommand{\calU}{\mathcal{U}}
\newcommand{\cat}[1]{\mathcal{#1}}
\newcommand{\bigland}{\bigwedge}
\newcommand{\pow}{\mathcal{P}}
\newcommand{\id}{\mathrm{id}}
\newcommand{\isom}{\simeq}
\newcommand{\Term}{\mathcal{T}}
\newcommand{\Free}{\mathcal{F}}
\newcommand{\set}[2]{\left\{\, #1 \mathrel{}\middle|\mathrel{} #2 \,\right\}}
\newcommand{\abs}[1]{{\lvert #1 \rvert}}
\newcommand{\norm}[1]{\left\lVert #1 \right\rVert}
\newcommand{\blank}{{-}} 
\newcommand{\mono}{\rightarrowtail}
\newcommand{\epi}{\twoheadrightarrow}
\newcommand{\onto}{\epi}
\newcommand{\e}{\varepsilon}
\newcommand{\sem}[1]{{\llbracket #1 \rrbracket}}
\newcommand{\bbRp}{\bbR_{\ge 0}}
\newcommand{\quot}{/}
\newcommand{\cle}{\preccurlyeq}
\newcommand{\cge}{\succcurlyeq}
\newcommand{\cjoin}{\curlyvee}
\newcommand{\cmeet}{\curlywedge}
\newcommand{\bigcjoin}{\bigcurlyvee}
\newcommand{\Con}{\mathop{\mathbf{Con}}\nolimits}
\mathchardef\gt="313E 
\mathchardef\lt="313C 
\renewcommand{\phi}{\varphi}
\newcommand{\im}{\mathop{\mathrm{im}}\nolimits}
\renewcommand{\subset}{\subseteq}
\title{Continuous Varieties of Metric and Quantitative Algebras}
\author{Wataru Hino \\ The University of Tokyo, Japan \\
        \url{wataru@is.s.u-tokyo.ac.jp}}
\date{\today}
\begin{document}

\maketitle

\begin{abstract}
  A \emph{metric algebra} is a metric variant of the notion of $\Sigma$-algebra,
  first introduced in universal algebra to deal with
  algebras equipped with metric structures such as normed vector spaces.
  In this paper, we showed metric versions of the \emph{variety theorem},
  which characterizes \emph{strict varieties}
  (classes of metric algebras defined by \emph{metric equations})
  and \emph{continuous varieties}
  (classes defined by \emph{a continuous family of basic quantitative inferences})
  by means of closure properties.
  To this aim, we introduce the notion of \emph{congruential pseudometric}
  on a metric algebra, which corresponds to congruence in classical universal algebra, and
  we investigate its structure.
\end{abstract}

\section{Introduction}
\subsection{Metric and Quantitative Algebra}
A \emph{quantitative algebra} is introduced by Mardare et al.\
as a quantitative variant of the notion of $\Sigma$-algebra, in the sense of classical universal algebra in \cite{Mardare2016}.
They use an atomic formula of the form $s =_{\e} t$, where $\e$ is a non-negative real number,
instead of an equation $s = t$, and give a complete deductive system with respect
to quantitative algebras.
They investigate classes defined by \emph{basic quantitative inferences},
which are formulas of the form $\bigland_{i = 1}^{n} x_i =_{\e_i} y_i \to s =_{\e} t$
where $x_i$ and $y_i$ are restricted to variables.
They show that various well-known metric constructions, such as the Hausdorff metric,
the Kantorovich metric and the Wasserstein metric, naturally arise as free quantitative algebras
with suitable axioms consisting of basic quantitative inferences.
The theory of quantitative algebra is applied
to the axiomatization of the behavioral distance \cite{Bacci2016}.

In fact, the idea of using indexed binary relations to axiomatize metric structures
is already in the literature of universal algebra
\cite{Weaver1995, Khudyakov2003} under the name of \emph{metric algebra}.
This notion is slightly wider than that of quantitative algebra
in the sense that operations in metric algebras are not required to be non-expansive.
Weaver \cite{Weaver1995} and Khudyakov \cite{Khudyakov2003} prove
continuous versions of the characterization theorem
for \emph{quasivarieties}, i.e., classes of algebras defined by implications,
and the decomposition theorem corresponding to the one in the classical theory.

However a metric version of the variety theorem has been missing for long.
We give a very straightforward version in \cite{Hino2016},
and Mardare et al.\ \cite{Mardare2017} give the characterization theorem for $\kappa$-variety,
where $\kappa$ is a cardinal, which generalizes our result in \cite{Hino2016}.

\subsection{Contributions}
In this paper, we will investigate the universal algebraic treatment of metric
and quantitative algebra. More specifically,

\begin{itemize}
  \item We give a clean formulation of the theory of metric
    and quantitative algebra based
    on \emph{congruential pseudometric} (Definition~\ref{def:cong}).
    We prove some basic results on congruential pseudometric,
    including the metric variant of the isomorphism theorems.
    Especially the characterization theorem of direct products via congruential pseudometrics
    seems non-trivial since we need to assume that the given metric space is complete.
  \item We prove the variety theorem for classes of metric (or quantitative) algebras
    defined by metric equations.
    This is proved in our previous work \cite{Hino2016}.
    Here we give a more concise proof by congruential pseudometrics.
  \item We prove the variety theorem for \emph{continuous varieties},
  which are classes of metric (or quantitative) algebras defined by basic quantitative inferences
  and satisfy the \emph{continuity condition}.
\end{itemize}

As we mentioned, a basic quantitative inference is an implicational formula
whose assumptions are metric equations between variables.
One of the main challenges when considering implicationally defined classes is the \emph{size problem};
it is often easy to show that a given class is defined by implications
if we allow infinitely many assumptions, and difficulties arise
when we want to have finitary axioms.
We use \emph{ultraproduct} to deal with the size problem, following the approach in \cite{Weaver1995},
but we make the relation between ultraproducts and the size problem more explicit:
we first show the weak version of the compactness theorem for metric algebras,
and use it for the restriction of the size of assumptions.

A variety theorem for
$\kappa$-variety\footnote{a class defined by $\kappa$-basic quantitative inferences.}
is already shown in \cite{Mardare2017}, but it lacks the continuity condition.
The continuity condition is important, especially when we work with complete metric spaces.
Indeed, as pointed out in \cite{Mardare2016}, a class defined by basic quantitative inferences
is closed under completion if its axioms satisfy the continuity condition
(the situation is the same for quasivarieties \cite{Weaver1995}).
Moreover the continuity condition also implies the closure property
under \emph{ultralimits},
which can be seen as a robustness condition in some sense.
This point is discussed in Section~\ref{sec:ultralimit-metsp}.

\section{Preliminaries}
\label{ch:preliminaries}

In this section, we review some notions that we will need in the following sections.

\subsection{Classical Universal Algebra}
\label{sec:classical-theory}

Let $\Sigma$ be an algebraic signature, i.e., a set with an arity map
$\abs{\blank} \colon \Sigma \to \bbN$.
We define $\Sigma_n$ by $\Sigma_n = \set{\sigma \in \Sigma}{\abs{\sigma} = n}$
for each $n \in \bbN$.

\begin{definition}[See e.g.\ \cite{Burris1981}] \mbox{}
  \begin{itemize}
    \item A \emph{$\Sigma$-algebra} is a tuple $A = (A, (\sigma^A)_{\sigma \in \Sigma})$
    where $A$ is a set endowed with an operation $\sigma^A \colon A^n \to A$ for
    each $\sigma \in \Sigma_n$. We will just write $\sigma$ for $\sigma^A$
    if $A$ is clear from the context.

    \item A map $f \colon A \to B$ between $\Sigma$-algebras is a \emph{$\Sigma$-homomorphism}
    if it preserves all $\Sigma$-operations, i.e., \
    $f(\sigma^A(a_1, \ldots, a_n)) = \sigma^B(f(a_1), \ldots, f(a_n))$
    for each $\sigma \in \Sigma_n$ and $a_1, \ldots, a_n \in A$.

    \item A \emph{subalgebra} of a $\Sigma$-algebra $A$ is a subset of $A$ closed under $\Sigma$-operations,
    regarded as a $\Sigma$-algebra by restricting operations.
    A subalgebra is identified with (the isomorphic class of) a pair $(B, i)$,
    where $B$ is a $\Sigma$-algebra
    and $i \colon B \mono A$ is an injective homomorphism.

    \item The \emph{product} of $\Sigma$-algebras $(A_i)_{i \in I}$ is the direct product of
    the underlying sets endowed with the pointwise $\Sigma$-operations.

    \item A \emph{quotient} (also called a \emph{homomorphic image}) of a $\Sigma$-algebra $A$ is
    a pair $(B, \pi)$ where $B$ is a $\Sigma$-algebra
    and $\pi \colon A \epi B$ is a surjective homomorphism.

    \item Given a set $X$, the \emph{set $\Term_{\Sigma} X$ of $\Sigma$-terms over $X$} is inductively defined as follows:
    each $x \in X$ is a $\Sigma$-term (called a \emph{variable}), and
    if $\sigma \in \Sigma_n$ and $t_1, \ldots, t_n$ are $\Sigma$-terms,
    then $\sigma(t_1, \ldots, t_n)$ is a $\Sigma$-term.

    The set $\Term_{\Sigma} X$ is endowed with a natural $\Sigma$-algebra structure,
    and this algebra is called the \emph{free $\Sigma$-algebra over $X$}.
    It satisfies the following universality:
    for each $\Sigma$-algebra $A$, a map $v \colon X \to A$ uniquely extends
    to a $\Sigma$-homomorphism $v^{\sharp} \colon \Term_{\Sigma} X \to A$.
    We also denote $v^{\sharp}(t)$ by $\sem{t}_v$.

    \item Given a set $X$,
    a \emph{$\Sigma$-equation over $X$} is a formula $s = t$ where $s, t \in \Term_{\Sigma} X$.

    We say that a $\Sigma$-algebra $A$ \emph{satisfies} a $\Sigma$-equation $s = t$ over $X$
    (denoted by $A \models s = t$)
    if $\sem{s}_v = \sem{t}_v$ holds for any map $v \colon X \to A$.

    For a set of $\Sigma$-equations, we say $A \models E$ if
    $A$ satisfies all equations in $E$.

    \item A class $\cat{K}$ of $\Sigma$-algebras is a \emph{variety} if there is a set $E$ of
    equations such that
    $\cat{K} = \set{A \colon \text{a $\Sigma$-algebra}}{A \models E}$ holds.
  \end{itemize}
\end{definition}

If the signature $\Sigma$ is obvious from the context,
we omit the prefix $\Sigma$ and just say \emph{homomorphism}, \emph{equation}, etc.

\newpara

The following theorem is fundamental in universal algebra, and is proved by Birkhoff.
It states that the property of being a variety is equivalent to a certain closure property;
see e.g.\ \cite{Burris1981}.
Our main goal is to prove the metric version of this theorem.

\begin{theorem}[Variety theorem \cite{Birkhoff1935}]
  A class $\cat{K}$ of $\Sigma$-algebras is a variety
  if and only if $\cat{K}$ is closed under subalgebras, products and quotients.
\end{theorem}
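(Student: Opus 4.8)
The plan is to prove the two implications separately, with the converse (``closed under subalgebras, products and quotients $\Rightarrow$ variety'') carrying all the real content; the forward direction is the routine soundness check.

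\textbf{Easy direction.} First I would verify that a variety $\cat{K} = \set{A}{A \models E}$ is closed under the three operations. This reduces to the observation that satisfaction of a single equation $s = t$ is inherited by each construction: a valuation into a subalgebra is simply a valuation into the ambient algebra; in a product $\sem{t}_v$ is computed coordinatewise, so componentwise validity gives validity; and for a quotient $\pi \colon A \epi B$ and a valuation $v \colon X \to B$, one lifts $v$ along the surjection to $w \colon X \to A$ and uses $\sem{t}_v = \pi(\sem{t}_w)$ together with $A \models s = t$.

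\textbf{Hard direction.} For the converse, suppose $\cat{K}$ is closed under subalgebras, products and quotients. Let $E$ be the set of all equations valid throughout $\cat{K}$ and set $\cat{V} = \set{A}{A \models E}$, the variety axiomatised by $E$. By construction $\cat{K} \subseteq \cat{V}$, so it remains to prove $\cat{V} \subseteq \cat{K}$. The key idea is to realise the free $\cat{K}$-algebra as a genuine member of $\cat{K}$ and then exhibit an arbitrary $A \in \cat{V}$ as one of its quotients. Fix $A \in \cat{V}$, choose a set $X$ with a surjection onto $A$, and let $q \colon \Term_\Sigma X \epi A$ be its homomorphic extension. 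On $\Term_\Sigma X$ define $s \sim t$ iff $(s = t) \in E$; this is a congruence, and I claim $F := \Term_\Sigma X / {\sim}$ lies in $\cat{K}$. Indeed, for every pair $s \not\sim t$ the definition of $E$ supplies an algebra $B_{s,t} \in \cat{K}$ and a homomorphism $h_{s,t} \colon \Term_\Sigma X \to B_{s,t}$ with $h_{s,t}(s) \ne h_{s,t}(t)$; replacing $B_{s,t}$ by $\im h_{s,t}$ (a subalgebra of a member of $\cat{K}$, hence again in $\cat{K}$) I may assume $\abs{B_{s,t}} \le \abs{\Term_\Sigma X}$. The induced homomorphism $\Term_\Sigma X \to \prod_{s \not\sim t} B_{s,t}$ then has kernel exactly $\sim$ (the $\sim$-related pairs are collapsed because $E$-equations hold in every $B_{s,t}$, and the non-related pairs are separated by the corresponding factor), so $F$ embeds into this product; closure under products and subalgebras yields $F \in \cat{K}$. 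Finally, since $A \models E$, the map $q$ identifies every $\sim$-related pair and therefore factors as a surjection $F \epi A$; closure under quotients gives $A \in \cat{K}$, completing the proof.

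\textbf{Main obstacle.} The only genuinely delicate point is the size issue concealed in the product: a priori the separating algebras $B_{s,t}$ range over a proper class, and it is precisely the passage to homomorphic images of the \emph{fixed} term algebra $\Term_\Sigma X$ --- bounding their cardinality by $\abs{\Term_\Sigma X}$ --- that lets one form a legitimate set-indexed product and stay within $\cat{K}$. This is exactly the phenomenon flagged in the introduction, and it is the step whose metric analogue will later force the use of the compactness and ultraproduct machinery.
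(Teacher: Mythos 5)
Your proof is correct and is the standard Birkhoff HSP argument, which is exactly the proof the paper has in mind: the paper states this classical theorem without proof, deferring to \cite{Burris1981}. Your key construction --- realizing the $\cat{K}$-free algebra as a subalgebra of a set-indexed product of separating homomorphic images of $\Term_\Sigma X$ (with the cardinality bound handling the size issue), and then exhibiting $A \models E$ as a quotient of it --- is also precisely the strategy the paper itself follows for its metric analogue (Theorem~\ref{hsp-theorem}, via Lemma~\ref{lem:presentation} and Corollary~\ref{cor:cong-complat}).
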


\subsection{Metric Space and Pseudometric}
\label{sec:metric-sp}

Now we review the notions regarding metric spaces.

\begin{definition}[e.g.\ \cite{Howes1995}] \mbox{}
  \begin{itemize}
    \item An \emph{extended real} is an element of $\overline{\bbR} = \bbR \cup \{\pm \infty\}$.

    \item Given a set $X$, an \emph{(extended) pseudometric} on $X$ is
    a function $d \colon X \times X \to [0, \infty]$ that satisfies
    $d(x, x) = 0$, $d(x, y) = d(y, x)$ and $d(x, y) + d(y, z) \ge d(x, z)$.
    A pseudometric $d$ is a \emph{metric} if
    it also satisfies $d(x, y) = 0 \Rightarrow x = y$.

    A pseudometric space (resp. metric space) is a tuple $(X, d)$
    where $X$ is a set and $d$ is a pseudometric (resp. metric) on $X$.

    \item A map $f \colon X \to Y$ between metric spaces is \emph{non-expansive}
    if it satisfies $d(f(x), f(y)) \le d(x, y)$ for each $x, y \in X$.

    \item For a family $(X_i, d_i)_{i \in I}$ of metric spaces,
    its \emph{product} is defined by $\left( \prod_{i} X_i, d \right)$ where
     $d((x_i)_i, (y_i)_i) = \sup_{i \in I} d_i(x_i, y_i)$,
     and $d$ is called the \emph{supremum metric}.
  \end{itemize}
\end{definition}

Note that we admit infinite distances, called \emph{extended},
because the category of extended metric spaces is categorically more amenable
than that of ordinary metric spaces; it has coproducts and arbitrary products.
Moreover a set can be regarded as a discrete metric space,
where every pair of two distinct points has an infinite distance.

\newpara

In this paper, we denote $d(x, y) \le \e$ by $x =_{\e} y$.
To consider a metric structure as a family of binary relations
works well with various metric notions;
e.g.\ $f \colon X \to Y$ is non-expansive if and only if $x =_\e y$ implies
$f(x) =_\e f(y)$ for each $x, y \in X$ and $\e \ge 0$.
The supremum metric of the product space $\prod_{i \in I} X_i$ is also compatible with
this relational view of metric spaces; it is characterized by
$(x_i)_i =_\e (y_i)_i \iff x_i =_{\e} y_i$ for all $i \in I$.

We adopt the supremum metric rather than other metrics
(e.g.\ the 2-product metric) for the product of metric spaces.
One reason is the compatibility with the relational view above.
Another reason is that it corresponds to the product
in the category of extended metric spaces and non-expansive maps.

Recall that the supremum metric does not always give rise to the product topology;
the product of uncountably many metrizable spaces is not in general metrizable.

\newpara

Given a pseudometric space, we can always turn it into a metric space
by identifying points whose distance is zero.

\begin{proposition}[e.g.\ \cite{Howes1995}]
  \label{thm:metric-id}
  Given a pseudometric $d$ on $X$, the binary relation $\sim_d$ on $X$
  defined by $x \sim_d y \Leftrightarrow d(x, y) = 0$ is an equivalence relation.
  Moreover $\overline{d}([x], [y]) = d(x, y)$ defines
  a metric $\overline{d}$ on $\overline{X} = X \quot {\sim_d}$
  and yields to a metric space $(\overline{X}, \overline{d})$.
\end{proposition}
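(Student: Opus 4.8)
The plan is to verify, in turn, that $\sim_d$ is an equivalence relation, that the assignment $([x], [y]) \mapsto d(x, y)$ descends to a well-defined function $\overline{d}$ on equivalence classes, and that this $\overline{d}$ satisfies all the metric axioms, including the separation condition that distinguishes a metric from a pseudometric. Each part reduces to a direct application of the three pseudometric axioms.

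First I would check the equivalence-relation properties straight from the definition of $d$. Reflexivity is immediate from $d(x, x) = 0$, and symmetry from $d(x, y) = d(y, x)$. For transitivity, if $d(x, y) = 0$ and $d(y, z) = 0$, then the triangle inequality gives $d(x, z) \le d(x, y) + d(y, z) = 0$, whence $d(x, z) = 0$, i.e.\ $x \sim_d z$. The only step requiring a genuine, if still brief, argument is well-definedness: I must show that $d(x, y)$ depends only on the classes $[x]$ and $[y]$, and this is the step I expect to be the main (albeit mild) obstacle. Suppose $x \sim_d x'$ and $y \sim_d y'$. Applying the triangle inequality twice yields
\[
  d(x', y') \le d(x', x) + d(x, y) + d(y, y') = d(x, y),
\]
and the symmetric computation gives $d(x, y) \le d(x', y')$, so the two values coincide. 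Hence $\overline{d}$ is well defined.

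Finally I would confirm that $\overline{d}$ is a metric by transporting the pseudometric axioms through the quotient. The identity $\overline{d}([x], [x]) = d(x, x) = 0$, the symmetry $\overline{d}([x], [y]) = d(x, y) = d(y, x) = \overline{d}([y], [x])$, and the triangle inequality $\overline{d}([x], [z]) = d(x, z) \le d(x, y) + d(y, z) = \overline{d}([x], [y]) + \overline{d}([y], [z])$ all follow immediately from the corresponding properties of $d$ and the defining formula. The separation axiom, which upgrades the pseudometric to a metric, follows because $\overline{d}([x], [y]) = 0$ means $d(x, y) = 0$, i.e.\ $x \sim_d y$, i.e.\ $[x] = [y]$. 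Everything outside the well-definedness check is mechanical.
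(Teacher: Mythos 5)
Your proof is correct and complete: the equivalence-relation check, the well-definedness argument via two applications of the triangle inequality, and the transport of the pseudometric axioms (plus the separation property) to the quotient are exactly the standard argument for this fact. The paper itself states this proposition without proof, citing it as a known result from \cite{Howes1995}, so there is no in-paper proof to diverge from; your write-up supplies precisely the argument the citation stands in for.
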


\begin{definition}[e.g.\ \cite{Howes1995}]
  \label{def:metric-id}
  The equivalence relation $\sim_d$ defined in Proposition~\ref{thm:metric-id}
  is called the \emph{metric identification of $d$},
  and $(\overline{X}, \overline{d})$ is called a \emph{metric space induced by the pseudometric $d$}.
  We denote it by $X \quot d$.
\end{definition}

Technically, whenever we encounter a pseudometric space,
we can regard it as a metric space by the above construction.
However it does not mean that pseudometric is a totally redundant notion.
Our slogan is: \emph{pseudometrics is to metric spaces what equivalence relations is to sets}.
Later we discuss pseudometrics that are compatible with given algebraic structures,
which correspond to congruences in classical universal algebra.
We utilize this notion intensively in the proof of the variety theorem.

\subsection{Filter and Limit}
\label{sec:filter-limit}

Limits with respect to filters play an important role
in the construction of ultralimits of metric spaces.
Most of the results are straightforward generalizations of those for the usual limits.

\begin{definition}[e.g.\ \cite{Jech2002}]
  Let $I$ be a nonempty set.
  A \emph{filter on $I$} is a subset $\calF$ of $\pow(I)$
  that satisfies the following conditions:
  \begin{enumerate}
    \item $I \in \calF$,\quad $\emptyset \not\in \calF$.
    \item If $G \in \calF$ and $G \subset H$, then $H \in \calF$.
    \item If $G \in \calF$ and $H \in \calF$, then $G \cap H \in \calF$.
  \end{enumerate}
  A filter $\calF$ is an \emph{ultrafilter} if, for any $G \subset I$,
  either $G \in \calF$ or $I \setminus G \in \calF$ holds.
\end{definition}

\begin{example}
  Let $I$ be a nonempty set.
  \begin{itemize}
    \item For $a \in I$,
    a set $\calF_{a}$ defined by
    $\calF_{a} = \set{G \subset I}{a \in G}$ is an ultrafilter on $I$.
    It is called the \emph{principal ultrafilter at $a$}.
    \item Assume $I$ is infinite.
    The set $\calF_{\omega}$ of cofinite (i.e.\ its complement is finite)
    subsets of $I$ is a filter on $I$.
    It is called the \emph{cofinite filter on $I$}.
    A filter is \emph{free} if it contains the cofinite filter.
  \end{itemize}
\end{example}

\begin{lemma}[e.g.\ \cite{Jech2002}] \mbox{}
  \begin{enumerate}
    \item For a filter $\calF$ on $I$ and $J \in \calF$,
    the family $\calF |_{J} := \calF \cap \pow(J)$ is a filter on $J$.
    If $\calF$ is an ultrafilter, then $\calF |_{J}$ is an ultrafilter.
    \item Let $\calU$ be an ultrafilter on $I$ and $A, B \subset I$.
    If $A \cup B \in \calU$ holds, then either $A \in \calU$ or $B \in \calU$ holds.
  \end{enumerate}
\end{lemma}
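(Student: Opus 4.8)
The plan is to derive both parts directly from the filter and ultrafilter axioms; the arguments are elementary bookkeeping, and the only point requiring care in part~1 is that the filter conditions must be read relative to the new ground set $J$, and that complements taken in $I$ must be converted to complements in $J$ using the hypothesis $J \in \calF$. For part~1, I would first verify the three defining conditions for $\calF|_J = \calF \cap \pow(J)$ as a filter on $J$. Since $J \in \calF$ and $J \subseteq J$, we get $J \in \calF|_J$, while $\emptyset \notin \calF$ gives $\emptyset \notin \calF|_J$. For upward closure, if $G \in \calF|_J$ and $G \subseteq H \subseteq J$, then $H \in \calF$ by upward closure of $\calF$ and $H \in \pow(J)$, so $H \in \calF|_J$. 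For finite intersections, if $G, H \in \calF|_J$ then $G \cap H \in \calF$ and $G \cap H \subseteq J$, whence $G \cap H \in \calF|_J$.

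For the ultrafilter claim in part~1, I would suppose $\calF$ is an ultrafilter and fix an arbitrary $G \subseteq J$. Applying the ultrafilter property of $\calF$ to $G$, either $G \in \calF$, in which case $G \in \calF|_J$ immediately, or $I \setminus G \in \calF$. In the latter case I intersect with $J \in \calF$ to obtain $(I \setminus G) \cap J = J \setminus G \in \calF$, and since $J \setminus G \subseteq J$ this gives $J \setminus G \in \calF|_J$. Because $J \setminus G$ is exactly the complement of $G$ within the ground set $J$, this establishes the ultrafilter condition for $\calF|_J$.

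For part~2, I would argue by contraposition: assume $A \notin \calU$. By the ultrafilter property $I \setminus A \in \calU$, and combining this with $A \cup B \in \calU$ under finite intersection yields $(A \cup B) \cap (I \setminus A) \in \calU$. Since $(A \cup B) \cap (I \setminus A) = B \setminus A \subseteq B$, upward closure of $\calU$ gives $B \in \calU$, as required.

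I expect no genuine obstacle, as these are the standard elementary facts about ultrafilters. The single subtlety worth flagging is the relativization in part~1: one must remember to pass from $I \setminus G$ to $J \setminus G$ via $J \in \calF$, since otherwise the complement witnessing the ultrafilter condition for the restricted family would not lie in $\pow(J)$ and the condition would be misphrased.
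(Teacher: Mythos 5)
Your proof is correct, and it is the standard elementary verification: the paper itself gives no proof of this lemma, citing it as a known fact (e.g.\ Jech), so there is nothing to diverge from. Both halves of your argument check out, including the one genuinely delicate point you correctly flag --- converting $I \setminus G \in \calF$ into $J \setminus G \in \calF|_J$ by intersecting with $J \in \calF$, so that the ultrafilter condition is verified relative to the ground set $J$.
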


\begin{definition}
  Let $\calF$ be a filter on $I$.
  For an $I$-indexed family $(a_i)_{i \in I}$ of extended reals,
  we define $\liminf_{i \to \calF} a_i$ and $\limsup_{i \to \calF} a_i$ by
  \begin{align*}
    \liminf_{i \to \calF} a_i &= \adjustlimits \sup_{J \in \calF} \inf_{i \in J} a_i \\
    \limsup_{i \to \calF} a_i &= \adjustlimits \inf_{J \in \calF} \sup_{i \in J} a_i \quad.
  \end{align*}
  When $\liminf_{i \to \calF} a_i = \limsup_{i \to \calF} a_i = \alpha \in [-\infty, \infty]$,
  we write $\lim_{i \to \calF} a_i = \alpha$.
\end{definition}

\begin{example}
  \begin{itemize}
    \item For a set $I$ and $k \in I$, we have $\lim_{i \to \calF_k} a_i = a_k$.
    \item For the cofinite filter $\calF_\omega$ on $\bbN$,
    we have $\liminf_{n \to \calF_\omega} a_n = \liminf_{n \to \infty} a_n$
    and $\limsup_{n \to \calF_\omega} a_n = \limsup_{n \to \infty} a_n$.
    Thus the limit with respect to a filter is the generalization of the usual limit.
  \end{itemize}
\end{example}

The following results on filters and limits are all elementary.

\begin{lemma}
  \label{lem:filter-le}
  Let $\calF$ and $\calG$ be a filter on $I$ where $\calF \subset \calG$.
  For a family $(a_i)_{i \in I}$ of extended reals,
  $\limsup_{i \to \calF} a_i \ge \limsup_{i \to \calG} a_i$
  and $\liminf_{i \to \calF} a_i \le \liminf_{i \to \calG} a_i$.
\end{lemma}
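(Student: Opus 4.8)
The plan is to unfold the defining formulas for $\liminf$ and $\limsup$ and reduce the claim to a standard order-theoretic fact about infima and suprema taken over nested index sets. Concretely, I would set $g(J) = \sup_{i \in J} a_i$ and $h(J) = \inf_{i \in J} a_i$ for each nonempty $J \subset I$; these are well-defined elements of $[-\infty, \infty]$ because the extended real line is a complete lattice, so no emptiness or unboundedness issue arises. By definition we then have $\limsup_{i \to \calF} a_i = \inf_{J \in \calF} g(J)$ and $\limsup_{i \to \calG} a_i = \inf_{J \in \calG} g(J)$, and dually $\liminf_{i \to \calF} a_i = \sup_{J \in \calF} h(J)$ and $\liminf_{i \to \calG} a_i = \sup_{J \in \calG} h(J)$.

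The key observation is the hypothesis $\calF \subset \calG$: on the two sides the infimum (resp.\ supremum) ranges over the \emph{same} function $g$ (resp.\ $h$) but over nested domains. For the $\limsup$ inequality I would invoke the fact that an infimum over a larger index set is no larger than the infimum over a smaller one; since $\calF \subset \calG$, this gives $\inf_{J \in \calG} g(J) \le \inf_{J \in \calF} g(J)$, which is exactly $\limsup_{i \to \calG} a_i \le \limsup_{i \to \calF} a_i$. Dually, a supremum over a larger index set is no smaller, so $\sup_{J \in \calF} h(J) \le \sup_{J \in \calG} h(J)$, giving $\liminf_{i \to \calF} a_i \le \liminf_{i \to \calG} a_i$.

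There is essentially no obstacle here, consistent with the remark preceding the statement that these results are elementary. The only points to watch are that all the infima and suprema are legitimately defined (guaranteed by completeness of $[-\infty,\infty]$) and that one does not accidentally reverse the two inequalities: the asymmetry between $\inf$, which shrinks as its domain grows, and $\sup$, which grows, is precisely what produces the opposite-facing inequalities for $\limsup$ and $\liminf$. In short, the whole lemma is just the monotonicity of $\inf$ and $\sup$ in their index sets, transported through the definitions.
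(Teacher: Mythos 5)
Your proof is correct and is exactly the argument the paper has in mind: the paper's proof is simply ``Obvious from the definition of limit infimum and supremum,'' and your unfolding of the definitions together with the monotonicity of $\inf$ and $\sup$ in their index sets is precisely that obvious argument spelled out. Nothing is missing, and the direction of each inequality is handled correctly.
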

\begin{proof}
  Obvious from the definition of limit infimum and supremum.
\end{proof}

\begin{lemma}
  \label{lem:sup-plus}
  Let $\calF$ be a filter on $I$ and $(x_i)_{i \in I}, (y_i)_{i \in I}$ be families of reals.
  Then we have:
  \[
    \limsup_{i \to \calF} (x_i + y_i) \le \limsup_{i \to \calF} x_i  + \limsup_{i \to \calF} y_i \quad .
  \]
\end{lemma}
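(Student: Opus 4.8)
The plan is to unfold the definition $\limsup_{i \to \calF} a_i = \inf_{J \in \calF} \sup_{i \in J} a_i$ and reduce the claimed inequality to the elementary subadditivity of the supremum, namely $\sup_{i \in L}(x_i + y_i) \le \sup_{i \in L} x_i + \sup_{i \in L} y_i$ over a fixed index set $L$. The only genuine idea is that the two suprema appearing on the right-hand side of the statement range over possibly different members $J, K \in \calF$, whereas subadditivity of $\sup$ requires a common index set. This mismatch is resolved precisely by the filter axiom that $J \cap K \in \calF$, which lets us compare all three quantities on a single member of $\calF$.

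Concretely, I would fix arbitrary $J, K \in \calF$ and work on $L = J \cap K$, which lies in $\calF$ by the intersection axiom for filters (and is nonempty, since $\emptyset \notin \calF$). On $L$, subadditivity of $\sup$ together with the monotonicity of $\sup$ under the inclusions $L \subset J$ and $L \subset K$ yields
\[
  \sup_{i \in J \cap K}(x_i + y_i) \le \sup_{i \in J \cap K} x_i + \sup_{i \in J \cap K} y_i \le \sup_{i \in J} x_i + \sup_{i \in K} y_i .
\]
Since $J \cap K \in \calF$, the leftmost term bounds the infimum defining the limit superior, so that
\[
  \limsup_{i \to \calF}(x_i + y_i) \le \sup_{i \in J} x_i + \sup_{i \in K} y_i
\]
holds for every choice of $J, K \in \calF$.

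Finally I would take infima on the right, first over $J \in \calF$ with $K$ held fixed and then over $K \in \calF$, which gives exactly $\limsup_{i \to \calF}(x_i + y_i) \le \limsup_{i \to \calF} x_i + \limsup_{i \to \calF} y_i$. Rather than a real obstacle, the only point demanding care is the arithmetic of extended reals: because $J \cap K$ is nonempty, every supremum above lies in $(-\infty, +\infty]$, so no indeterminate sum $+\infty + (-\infty)$ can occur in the displayed inequalities and the two successive infima commute with the additive constants; in the degenerate case where one of $\limsup_{i \to \calF} x_i$ or $\limsup_{i \to \calF} y_i$ is $+\infty$ the asserted bound holds trivially. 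In the intended application to pseudometrics all quantities are nonnegative, so these subtleties do not even arise.
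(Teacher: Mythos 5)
Your proof is correct and follows essentially the same route as the paper's: both hinge on the observation that $\sup_{i \in J \cap J'}(x_i + y_i) \le \sup_{i \in J} x_i + \sup_{i \in J'} y_i$ with $J \cap J' \in \calF$ by the filter intersection axiom. The only difference is cosmetic --- you finish by taking the two infima over $J$ and $K$ directly, whereas the paper picks $J, J'$ witnessing the two limits superior within $\e$ and lets $\e \to 0$; your handling of the extended-real edge cases is if anything more careful than the paper's.
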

\begin{proof}
  For $\e \gt 0$, there exist $J, J' \in \calF$ such that
  \[
    \sup_{i \in J} x_i + \sup_{i \in J'} y_i
    \le \limsup_{i \to \calF} x_i  + \limsup_{i \to \calF} y_i + \e \quad.
  \]
  Since
  $\sup_{i \in J} x_i + \sup_{i \in J'} y_i
  \ge \sup_{i \in J \cap J'} (x_i + y_i) \ge \limsup_{i \to \calF} (x_i + y_i)$,
  we have
  \[
  \limsup_{i \to \calF} (x_i + y_i) \le \limsup_{i \to \calF} x_i  + \limsup_{i \to \calF} y_i + \e \quad .
  \]
  Then letting $\e \to 0$ completes the proof.
\end{proof}

\begin{lemma}\label{lem:usual-limit}
  Let $\calF$ be a filter on $I$, and $(a_i)_{i \in I}$ be a family of extended reals.
  Then $\lim_{i \in \calF} a_i = \alpha$ if and only if
  $\set{i \in I}{\abs{a_i - \alpha} \le \e} \in \calF$ for any $\e \gt 0$.
\end{lemma}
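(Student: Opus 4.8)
The plan is to unwind the definitions
\[
  \liminf_{i \to \calF} a_i = \sup_{J \in \calF} \inf_{i \in J} a_i,
  \qquad
  \limsup_{i \to \calF} a_i = \inf_{J \in \calF} \sup_{i \in J} a_i,
\]
and to translate each side of the claimed equivalence into the membership of suitable level sets in $\calF$. Here $\alpha$ is a finite real, as the expression $\abs{a_i - \alpha}$ requires. I will freely use that a filter is upward closed (condition~2) and closed under binary intersection (condition~3), together with the general inequality $\liminf_{i \to \calF} a_i \le \limsup_{i \to \calF} a_i$, which follows because for any $J, J' \in \calF$ the set $J \cap J'$ is a nonempty member of $\calF$, so $\inf_{i \in J} a_i \le \inf_{i \in J \cap J'} a_i \le \sup_{i \in J \cap J'} a_i \le \sup_{i \in J'} a_i$, and one takes the supremum over $J$ and the infimum over $J'$.

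For the forward direction, assume $\lim_{i \to \calF} a_i = \alpha$ and fix $\e \gt 0$. Since $\limsup_{i \to \calF} a_i = \alpha \lt \alpha + \e$, the infimum defining $\limsup$ is not attained below $\alpha+\e$ only if some witness is smaller, so there is $J_1 \in \calF$ with $\sup_{i \in J_1} a_i \lt \alpha + \e$, giving $a_i \lt \alpha + \e$ for all $i \in J_1$. Symmetrically, from $\liminf_{i \to \calF} a_i = \alpha \gt \alpha - \e$ I obtain $J_2 \in \calF$ with $\inf_{i \in J_2} a_i \gt \alpha - \e$, so $a_i \gt \alpha - \e$ for all $i \in J_2$. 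Then $J_1 \cap J_2 \in \calF$ is contained in $\set{i \in I}{\abs{a_i - \alpha} \le \e}$, and upward closure yields the desired membership.

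For the backward direction, assume $J_\e := \set{i \in I}{\abs{a_i - \alpha} \le \e} \in \calF$ for every $\e \gt 0$. On $J_\e$ one has $\alpha - \e \le a_i \le \alpha + \e$, hence $\sup_{i \in J_\e} a_i \le \alpha + \e$ and $\inf_{i \in J_\e} a_i \ge \alpha - \e$. Using $J_\e$ as a witness in the defining infimum and supremum gives $\limsup_{i \to \calF} a_i \le \alpha + \e$ and $\liminf_{i \to \calF} a_i \ge \alpha - \e$ for all $\e \gt 0$, so $\limsup_{i \to \calF} a_i \le \alpha \le \liminf_{i \to \calF} a_i$. Combined with the general inequality $\liminf_{i \to \calF} a_i \le \limsup_{i \to \calF} a_i$, both coincide with $\alpha$.

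I expect no real obstacle here: the argument is entirely elementary. The only points demanding minor care are the reduction from $\inf_{J}\sup_{i\in J}$ to a single witnessing $J \in \calF$ (justified by the definition of infimum together with $\alpha$ being finite), and the bookkeeping for extended-real values $a_i = \pm\infty$, which are automatically excluded from every $J_\e$ and therefore cause no difficulty.
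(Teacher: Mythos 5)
Your proof is correct. The paper gives no proof of this lemma at all (it is one of the facts dismissed as ``elementary'' in Section~2.3), and your argument---extracting witnesses $J_1, J_2 \in \calF$ from the infimum/supremum defining $\limsup$ and $\liminf$ and intersecting them for the forward direction, then using the level sets $J_\e$ as witnesses together with the general inequality $\liminf_{i \to \calF} a_i \le \limsup_{i \to \calF} a_i$ for the converse---is precisely the standard definitional unwinding the authors presumably intend; your side remarks (finiteness of $\alpha$, automatic exclusion of $a_i = \pm\infty$ from $J_\e$, and the nonemptiness-of-$J \cap J'$ argument, which the paper itself reuses in the subsequent proposition on ultrafilter limits) are all sound.
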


\begin{proposition}
  \label{cont-limit}
  Let $\calF$ be a filter on $I$, $f \colon \bbR^n \to \bbR$ be a continuous function
  and $(x^0_i)_{i \in I}, \ldots, (x^n_i)_{i \in I}$ be families of real numbers.
  If $\lim_{i \to \calF} x^k_i = \alpha^k \in \bbR$ for each $k$,
  then $\lim_{i \to \calF} f(x^0_i, \ldots, x^n_i) = f(\alpha^0, \ldots, \alpha^n)$.
\end{proposition}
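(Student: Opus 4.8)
The plan is to reduce everything to the characterization of filter-limits given in Lemma~\ref{lem:usual-limit}, which says that $\lim_{i \to \calF} a_i = \alpha$ precisely when $\set{i \in I}{\abs{a_i - \alpha} \le \e} \in \calF$ for every $\e > 0$. So it suffices to fix an arbitrary $\e > 0$ and show that the set $\set{i \in I}{\abs{f(x^0_i, \ldots, x^n_i) - f(\alpha^0, \ldots, \alpha^n)} \le \e}$ belongs to $\calF$; once this is established for all $\e$, the conclusion follows immediately from the same lemma applied in the reverse direction.

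First I would invoke the continuity of $f$ at the point $(\alpha^0, \ldots, \alpha^n)$: there is a $\delta > 0$ such that $\abs{y^k - \alpha^k} \le \delta$ for all $k$ implies $\abs{f(y^0, \ldots, y^n) - f(\alpha^0, \ldots, \alpha^n)} \le \e$. Next, applying Lemma~\ref{lem:usual-limit} to each hypothesis $\lim_{i \to \calF} x^k_i = \alpha^k$ with this same $\delta$, I obtain that each set $J_k := \set{i \in I}{\abs{x^k_i - \alpha^k} \le \delta}$ lies in $\calF$.

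Since there are only finitely many indices $k$, the intersection $J := \bigcap_{k} J_k$ is a finite intersection of members of $\calF$, hence $J \in \calF$ by repeated use of the filter axiom for intersections. For any $i \in J$ we have $\abs{x^k_i - \alpha^k} \le \delta$ for all $k$, so by the choice of $\delta$ we get $\abs{f(x^0_i, \ldots, x^n_i) - f(\alpha^0, \ldots, \alpha^n)} \le \e$; that is, $J$ is contained in the target set. By upward closure of the filter, the target set is then itself in $\calF$, which is what we wanted.

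There is no serious obstacle in this argument; it is the standard $\e$--$\delta$ transfer of continuity, with the filter axioms playing the role that finite conjunctions of inequalities play in the classical setting. The only point requiring the slightest care is that the intersection $J$ must remain inside the filter, which is exactly why it is essential that only finitely many families are involved: a filter is closed merely under finite intersections, so the same strategy would break down for a continuous function of infinitely many arguments.
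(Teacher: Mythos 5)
Your proof is correct and follows essentially the same route as the paper's: fix $\e > 0$, extract $\delta$ from continuity of $f$ at $(\alpha^0, \ldots, \alpha^n)$, and transfer through Lemma~\ref{lem:usual-limit} together with upward closure of the filter. You are in fact slightly more explicit than the paper, which applies the lemma directly to the vector condition $\abs{\vec{x_i} - \vec{\alpha}} \le \delta$; your finite intersection of the coordinate sets $J_k$ is precisely the step that justifies that claim, since the lemma as stated concerns scalar families.
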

\begin{proof}
  Fix $\e \gt 0$. Since $f$ is continuous, there exists $\delta \gt 0$ such that
  for any $\vec{x} \in \bbR^n$ with $\abs{\vec{x} - \vec{\alpha}} \le \delta$,
  we have $\abs{f(\vec{x}) - f(\vec{\alpha})} \le \e$.
  Let $J = \set{i \in I}{\abs{\vec{x_i} - \vec{\alpha}} \le \delta}$
  and $J' = \set{i \in I}{\abs{f(\vec{x_i}) - f(\vec{\alpha})} \le \e}$.
  By Lemma~\ref{lem:usual-limit} we have $J \in \calF$, and
  since $J \subset J'$ holds, we also have $J' \in \cal{F}$.
  Again by Lemma~\ref{lem:usual-limit}, we conclude $\lim_{i \to \calF} f(\vec{x_i}) = f(\vec{\alpha})$,
  which completes the proof.
\end{proof}

\begin{proposition}\label{lem:limit-exists}
  Let $\calF$ be a free filter on $\bbN$,
  and $(a_n)_{n=0}^{\infty}$ be a sequence of real numbers.
  If $\lim_{n \to \infty} a_n = \alpha$, then
  \[
    \liminf_{n \to \calF} a_n = \limsup_{n \to \calF} a_n = \alpha  \quad .
  \]
\end{proposition}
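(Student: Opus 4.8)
The plan is to recognize that the desired conclusion is precisely the statement $\lim_{n \to \calF} a_n = \alpha$ unpacked through the definition of the limit with respect to a filter. Hence it suffices to verify the characterization supplied by Lemma~\ref{lem:usual-limit}, namely that $\set{n \in \bbN}{\abs{a_n - \alpha} \le \e} \in \calF$ for every $\e \gt 0$. I would fix $\e \gt 0$ and reduce everything to the interplay between ordinary convergence and the freeness of $\calF$.

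First I would use the hypothesis $\lim_{n \to \infty} a_n = \alpha$ to obtain, for the fixed $\e$, an index $N$ such that $\abs{a_n - \alpha} \le \e$ for all $n \ge N$. This shows that the set $\set{n \in \bbN}{\abs{a_n - \alpha} \le \e}$ contains the cofinite set $\set{n \in \bbN}{n \ge N}$. Since $\calF$ is free, it contains the cofinite filter $\calF_\omega$, so $\set{n \in \bbN}{n \ge N} \in \calF$; applying the upward-closure axiom of filters then gives $\set{n \in \bbN}{\abs{a_n - \alpha} \le \e} \in \calF$. As $\e \gt 0$ was arbitrary, Lemma~\ref{lem:usual-limit} yields $\lim_{n \to \calF} a_n = \alpha$, which by definition is exactly $\liminf_{n \to \calF} a_n = \limsup_{n \to \calF} a_n = \alpha$.

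I expect no real obstacle here: the only genuine content is the observation that ordinary convergence places every tail $\set{n \in \bbN}{n \ge N}$, hence every $\e$-neighborhood set, into any free filter. As an alternative route that avoids Lemma~\ref{lem:usual-limit}, one could sandwich the quantities directly: freeness gives $\calF_\omega \subset \calF$, so Lemma~\ref{lem:filter-le} bounds $\limsup_{n \to \calF} a_n \le \limsup_{n \to \calF_\omega} a_n = \limsup_{n \to \infty} a_n = \alpha$ and symmetrically $\liminf_{n \to \calF} a_n \ge \alpha$, while the elementary inequality $\liminf_{n \to \calF} a_n \le \limsup_{n \to \calF} a_n$ (valid for any filter, since $\inf_{i \in J} a_i \le \sup_{i \in J'} a_i$ for all $J, J' \in \calF$ because $J \cap J' \in \calF$ is nonempty) closes the chain $\alpha \le \liminf_{n \to \calF} a_n \le \limsup_{n \to \calF} a_n \le \alpha$.
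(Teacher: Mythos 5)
Your proposal is correct, and your primary argument takes a genuinely different route from the paper's. You verify the $\e$-set characterization of filter limits (Lemma~\ref{lem:usual-limit}) directly: ordinary convergence puts the tail $\set{n \in \bbN}{n \ge N}$ inside the set $\set{n \in \bbN}{\abs{a_n - \alpha} \le \e}$, freeness of $\calF$ puts that tail in $\calF$, and upward closure finishes the job, after which the conclusion is just the definition of $\lim_{n \to \calF} a_n = \alpha$ unpacked. The paper instead proves the statement by exactly the sandwich you sketch as your ``alternative route'': since $\calF_\omega \subset \calF$, Lemma~\ref{lem:filter-le} together with the identification of cofinite-filter limits with the ordinary $\liminf$/$\limsup$ gives $\alpha = \liminf_{n \to \infty} a_n \le \liminf_{n \to \calF} a_n \le \limsup_{n \to \calF} a_n \le \limsup_{n \to \infty} a_n = \alpha$. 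So your alternative \emph{is} the paper's proof --- and in fact you improve on it, since the paper silently uses the middle inequality $\liminf_{n \to \calF} a_n \le \limsup_{n \to \calF} a_n$, which it only proves later (for ultrafilters), whereas you justify it for arbitrary filters via the nonemptiness of $J \cap J'$. As for what each approach buys: the sandwich is shorter, needs only the order-theoretic Lemma~\ref{lem:filter-le}, and works verbatim when $\alpha = \pm\infty$ or the $a_n$ are extended reals (relevant for the paper's application to pointwise limits of metrics, which may take the value $\infty$); your primary route is a more hands-on, first-principles verification, but it leans on Lemma~\ref{lem:usual-limit} (which the paper states without proof) and tacitly assumes $\alpha$ is finite, since $\abs{a_n - \alpha}$ is meaningless for infinite $\alpha$.
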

\begin{proof}
  Since $\calF$ contains the cofinite filter, by Lemma~\ref{lem:filter-le},
  \[
  \alpha = \liminf_{n \to \infty} a_n
  \le \liminf_{n \to \calF} a_n
  \le \limsup_{n \to \calF} a_n
  \le \limsup_{n \to \infty} a_n = \alpha \quad .
  \]
\end{proof}

\begin{proposition}
  Given an ultrafilter $\calU$ on $I$ and a family of real numbers $(a_i)_{i \in I}$,
  then $\lim_{i \to \calU} a_i$ exists, i.e.,
  $\liminf_{i \to \calU} a_i$ and $\limsup_{i \to \calU} a_i$ coincide.
\end{proposition}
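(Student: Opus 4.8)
The plan is to split the claim into two inequalities: the easy bound $\liminf_{i \to \calU} a_i \le \limsup_{i \to \calU} a_i$, which in fact holds for any filter, and the reverse bound $\limsup_{i \to \calU} a_i \le \liminf_{i \to \calU} a_i$, where the defining dichotomy of an ultrafilter is exactly what makes the argument work. Combining the two gives the equality, i.e.\ the existence of the limit in $[-\infty, \infty]$.

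For the easy direction I would argue directly from the definitions. For any $J, J' \in \calU$ the intersection $J \cap J'$ lies in $\calU$ and is therefore nonempty (recall $\emptyset \notin \calU$), so choosing any $i \in J \cap J'$ gives $\inf_{k \in J} a_k \le a_i \le \sup_{k \in J'} a_k$. Taking the supremum over $J \in \calU$ and then the infimum over $J' \in \calU$ yields $\liminf_{i \to \calU} a_i \le \limsup_{i \to \calU} a_i$; no appeal to ultrafilterness is needed here.

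For the reverse direction I would argue by contradiction. Write $\ell = \liminf_{i \to \calU} a_i$ and $L = \limsup_{i \to \calU} a_i$, and suppose $\ell \lt L$ in $[-\infty, \infty]$. Since $\ell \lt L$, there is a genuine real number $c$ with $\ell \lt c \lt L$ (this single choice uniformly covers the cases where $\ell$ or $L$ is infinite). Now partition the index set as $I = A \cup B$ with $A = \set{i \in I}{a_i \le c}$ and $B = \set{i \in I}{a_i \gt c}$, where $A \cap B = \emptyset$. By the ultrafilter property, either $A \in \calU$ or $B \in \calU$. If $A \in \calU$, then $\sup_{i \in A} a_i \le c$, so $L = \inf_{J \in \calU} \sup_{i \in J} a_i \le \sup_{i \in A} a_i \le c \lt L$, a contradiction. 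If instead $B \in \calU$, then $\inf_{i \in B} a_i \ge c$, so $\ell = \sup_{J \in \calU} \inf_{i \in J} a_i \ge \inf_{i \in B} a_i \ge c \gt \ell$, again a contradiction. Hence $L \le \ell$.

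I expect no serious obstacle: the whole content sits in the dichotomy step, and the only point requiring mild care is the bookkeeping for infinite values of $\ell$ and $L$, which is dispatched by the observation that a real separator $c$ exists as soon as $\ell \lt L$. One could alternatively invoke Lemma~\ref{lem:usual-limit} to phrase the contradiction in terms of the sets $\set{i}{\abs{a_i - c} \le \e}$, but the direct sup/inf comparison above is cleaner and avoids introducing $\e$.
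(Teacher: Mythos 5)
Your proof is correct, but it takes a genuinely different route in the key step. The paper proves the equality directly, by a three-way case split: when $\limsup_{i \to \calU} a_i - \liminf_{i \to \calU} a_i \lt \infty$, it subdivides the interval $[\inf_{i \in J_0} a_i, \sup_{i \in J_0} a_i]$ (for a suitable $J_0 \in \calU$) into finitely many pieces of length at most $\e$ and invokes the finite pigeonhole property of ultrafilters (a finite union lying in $\calU$ forces one piece to lie in $\calU$) to conclude $\limsup - \liminf \le \e$ for every $\e \gt 0$; the cases $\limsup = \infty$ and $\liminf = -\infty$ are then handled separately by a threshold dichotomy. Your argument instead runs by contradiction: assuming $\ell = \liminf \lt \limsup = L$, you pick a real separator $c$ with $\ell \lt c \lt L$ and apply only the binary dichotomy ($A \in \calU$ or $I \setminus A \in \calU$) to the cut $A = \set{i \in I}{a_i \le c}$, obtaining a contradiction in either branch. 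This is essentially the paper's case (2) mechanism promoted to a uniform argument: it avoids the $\e$-subdivision, avoids the case analysis on finiteness (your observation that a real separator exists whenever $\ell \lt L$ in $[-\infty,\infty]$ handles the infinite cases for free), and needs only the defining property of ultrafilters rather than the finite-cover pigeonhole. What the paper's subdivision argument buys in exchange is a direct quantitative statement ($\limsup - \liminf \le \e$ for each $\e$) and a technique that generalizes to ultrafilter-convergence arguments in compact spaces; your proof is the shorter and more elementary of the two.
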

\begin{proof}
  First we show that $\liminf_{i \to \calU} a_i \le \limsup_{i \to \calU} a_i$ holds.
  Let $J, J' \in \calU$.
  By the nonemptiness of $J \cap J'$, we have
  $\inf_{i \in J} a_i \le \inf_{i \in J \cap J'} a_i \le \sup_{i \in J \cap J'} a_i \le \sup_{i \in J'} a_i$.
  Since this inequality holds for any $J$ and $J'$, we have
  $\sup_{J \in \calU} \inf_{i \in J} a_i \le \sup_{J' \in \calU} \inf_{i \in J'} a_i$.

  Now we show the equality.
  (1) Consider $\limsup_{i \in \calU} a_i - \liminf_{i \in \calU} a_i \lt \infty$,
  i.e, $\sup_{i \in J_0} a_i - \inf_{i \in J_0} a_i \lt \infty$ for some $J_0 \in \calU$.
  Let $(E_k)_{k=1}^{m}$ be a division of the interval
  $[\inf_{i \in J_0} a_i, \sup_{i \in J_0} a_i]$ to intervals whose lengths are smaller than $\e$.
  We define $(J_i)_{k=1}^{m}$ by $J_i = \set{i \in J_0}{a_i \in E_i}$.
  Since $\bigcup_{k=1}^{m} J_i = J_0 \in \calU$ and $\calU$ is an ultrafilter,
  there exists $k'$ such that $J_{k'} \in \calU$, and
  by the construction of $J_{k'}$, $\sup_{i \in J_{k'}} a_i - \inf_{i \in J_{k'}} a_i \le \e$ holds.
  Thus we conclude $\limsup_{i \in \calU} a_i - \liminf_{i \in \calU} a_i \le \e$.
  (2) Consider $\limsup_{i \to \calU} a_i = \infty$.
  For $M \in \bbR$,
  we have the division $I = J_0 \cup J_1$ for $J_0 = \set{i \in I}{a_i \lt M}$
  and $J_1 = \set{i \in I}{a_i \ge M}$. Since $\sup_{i \in J_0} a_i \le M \lt \infty$,
  we have $J_0 \not\in \calU$ and then $J_1 \in \calU$.
  Therefore we have $M \le \inf_{i \in J_1} a_i \le \liminf_{i \to \calU} a_i$
  and this inequality holds for any $M \in \bbR$.
  (3) $\liminf_{i \to \calU} a_i = - \infty$ is dual.
\end{proof}

\subsection{Ultralimit of Metric Spaces}
\label{sec:ultralimit-metsp}

\emph{Ultralimit} of metric spaces is introduced in \cite{Kapovich1995}.
It is a metric variant of ultraproduct of first order structures,
and in some sense it is considered as the \emph{limit}
(in the topological sense) of metric spaces.

\begin{lemma}
  Let $\calF$ be a filter on $I$.
  For a family $(X_i, d_i)_{i \in I}$ of metric spaces,
  the function
  $\theta \colon {\prod_{i} X_i} \times {\prod_{i} X_i} \to [0, \infty]$
  defined by
  \[
    \theta((x_i)_i, (y_i)_i) = \limsup_{i \to \calF} d_i(x_i, y_i)
  \]
  is a pseudometric.
\end{lemma}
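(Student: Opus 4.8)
The plan is to verify the three defining properties of a pseudometric for $\theta$: reflexivity $\theta(x,x) = 0$, symmetry, and the triangle inequality. The first two follow immediately from the corresponding pointwise properties of the $d_i$, so the real content lies in the triangle inequality. For reflexivity, since each $d_i$ is a metric we have $d_i(x_i, x_i) = 0$ for every $i$, whence $\theta((x_i)_i, (x_i)_i) = \limsup_{i \to \calF} 0 = 0$. For symmetry, the symmetry of each $d_i$ gives $d_i(x_i, y_i) = d_i(y_i, x_i)$ termwise, and taking $\limsup_{i \to \calF}$ of both sides yields $\theta((x_i)_i, (y_i)_i) = \theta((y_i)_i, (x_i)_i)$.

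For the triangle inequality I would start from the pointwise estimate $d_i(x_i, z_i) \le d_i(x_i, y_i) + d_i(y_i, z_i)$, which holds for each $i \in I$ because each $d_i$ is a metric. Applying the monotonicity of $\limsup_{i \to \calF}$ (immediate from its definition as an infimum of suprema, in the spirit of Lemma~\ref{lem:filter-le}) gives
\[
  \theta((x_i)_i, (z_i)_i) \le \limsup_{i \to \calF}\bigl(d_i(x_i, y_i) + d_i(y_i, z_i)\bigr).
\]
I would then invoke the subadditivity of $\limsup$, namely Lemma~\ref{lem:sup-plus}, to bound the right-hand side by $\theta((x_i)_i, (y_i)_i) + \theta((y_i)_i, (z_i)_i)$, which is exactly the desired inequality.

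The main obstacle is that Lemma~\ref{lem:sup-plus} is stated only for families of finite reals, whereas here the $d_i$ take values in $[0,\infty]$ and so the families may contain $+\infty$. I would dispose of this by a short case split. If either $\theta((x_i)_i, (y_i)_i)$ or $\theta((y_i)_i, (z_i)_i)$ equals $+\infty$, the triangle inequality is trivial because its right-hand side is $+\infty$. Otherwise both $\limsup$'s are finite, so for each there is a set in $\calF$ on which the corresponding $d_i$-values are bounded; intersecting these two sets yields a single $J \in \calF$ on which both families are finite-valued, and restricting to the filter $\calF\restriction_J$ (using that $\limsup_{i \to \calF} a_i = \limsup_{i \to \calF\restriction_J}(a_i)_{i \in J}$ whenever $J \in \calF$) lets me apply the finite-valued Lemma~\ref{lem:sup-plus} directly. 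Alternatively, and perhaps more cleanly, one can simply re-run the proof of Lemma~\ref{lem:sup-plus} in the extended-real setting: its argument goes through verbatim once the limsums are finite, and the only additional case, where one of the two limsums is $+\infty$, makes the bound hold trivially. Either route completes the verification of the triangle inequality and hence shows $\theta$ is a pseudometric.
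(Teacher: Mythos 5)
Your proof is correct and follows essentially the same route as the paper's: reflexivity and symmetry are immediate, and the triangle inequality is obtained by combining the pointwise triangle inequalities with monotonicity of $\limsup_{i \to \calF}$ and the subadditivity property of Lemma~\ref{lem:sup-plus}. Your additional case analysis for infinite values is a refinement of a point the paper silently glosses over (it applies Lemma~\ref{lem:sup-plus}, stated for real-valued families, directly to $[0,\infty]$-valued ones), not a genuinely different argument.
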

\begin{proof}
  Let $x = (x_i)_i$, $y = (y_i)_i$ and $z = (z_i)_i$ be sequences of points
  where $x_i, y_i, z_i \in X_i$.
  First, by taking $J = I$, we have
  $\theta(x, x) \le \sup_{i \in I} d_i(x_i, x_i) = 0$.
  Next $\theta(x, y) = \theta(y, x)$ obviously follows from the symmetricity of the definition.
  Finally, for the triangle inequality:
  \begin{align*}
    \theta(x, y) + \theta(y, z)
    &= \limsup_{i \to \calF} d_i(x_i, y_i) + \limsup_{i \to \calF} d_i(y_i, z_i) \\
    &\le \limsup_{i \to \calF} (d_i(x_i, y_i) + d_i(y_i, z_i)) \quad \tag*{(Lemma~\ref{lem:sup-plus})}\\
    &\le \limsup_{i \to \calF} d_i(x_i, z_i) = \theta(x, z) \quad .
  \end{align*}
\end{proof}

\begin{definition}
  \label{def:ulralimit-metsp}
  Let $\calF$ be a filter on $I$.
  For a family $(X_i, d_i)_{i \in I}$ of metric spaces,
  the \emph{reduced limit} of $(X_i, d_i)_{i \in I}$ by $\calF$
  is a metric space $\prod_i^{\calF} (X_i, d_i) = X \quot \theta$ where $X = \prod_{i \in I} X_i$
  and $\theta((x_i)_i, (y_i)_i) = \limsup_{i \to \calF} d_i(x_i, y_i)$.
  It is called \emph{ultralimit} \cite{Kapovich1995} when $\calF$ is an ultrafilter.
\end{definition}

The pointwise limit of metrics can be viewed as an example of ultralimit.

\begin{proposition}
  Let $\calF$ be a free filter on $\bbN$.
  Let $X$ be a set, and $d_\omega$ and $(d_n)_{n \in \omega}$ be metrics on $X$.
  If $d_\omega(x, y) = \lim_{n \to \infty} d_n(x, y)$, then
  $(X, d_\omega)$ is isometric to a subspace of $\prod_n^{\calF} (X, d_n)$
  by $f \colon X \to \prod_n^{\calF} (X, d_n);\, x \mapsto [x]_{n}$.
\end{proposition}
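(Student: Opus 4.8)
The plan is to show directly that $f$ is distance-preserving, i.e.\ that $\overline{\theta}(f(x), f(y)) = d_\omega(x, y)$ for all $x, y \in X$, where $\overline{\theta}$ is the induced metric on the quotient $\prod_n^{\calF}(X, d_n) = \hat{X} \quot \theta$ with $\hat{X} = \prod_{n \in \bbN} X$. Once this is established, injectivity of $f$ is automatic: if $f(x) = f(y)$ then $d_\omega(x, y) = 0$, and since $d_\omega$ is a metric this forces $x = y$. Thus $f$ is an isometry onto its image, which is exactly the assertion that $(X, d_\omega)$ is isometric to a subspace of $\prod_n^{\calF}(X, d_n)$.

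First I would unfold the definitions. The element $f(x) = [x]_n$ is the $\theta$-class of the constant sequence $(x)_{n \in \bbN} \in \hat{X}$. By the description of the induced metric in Proposition~\ref{thm:metric-id}, the distance between two such classes is computed on representatives, so
\[
  \overline{\theta}(f(x), f(y)) = \theta\bigl((x)_{n}, (y)_{n}\bigr) = \limsup_{n \to \calF} d_n(x, y).
\]
Next I would reduce the right-hand side to the ordinary limit. Put $a_n = d_n(x, y)$; this is a sequence of reals with $\lim_{n \to \infty} a_n = d_\omega(x, y)$ by hypothesis. Since $\calF$ is a free filter on $\bbN$ (hence contains the cofinite filter), Proposition~\ref{lem:limit-exists} yields $\limsup_{n \to \calF} a_n = \liminf_{n \to \calF} a_n = d_\omega(x, y)$. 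Combining this with the previous display gives $\overline{\theta}(f(x), f(y)) = d_\omega(x, y)$, as required.

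I do not expect any genuine obstacle here: the argument is a direct composition of the definition of the reduced limit with the filter-limit lemma. The only point deserving a moment's care is the extended case $d_\omega(x, y) = \infty$, where $(a_n)_n$ need not be a bounded real sequence and Proposition~\ref{lem:limit-exists} does not literally apply; there one argues instead that $\lim_{n \to \infty} a_n = \infty$ together with $\calF \supset \calF_\omega$ forces $\limsup_{n \to \calF} a_n = \infty$ as well, via the monotonicity of filtered limits in Lemma~\ref{lem:filter-le}. Either way the value of $\limsup_{n \to \calF} a_n$ agrees with $d_\omega(x, y)$, so the isometry claim holds.
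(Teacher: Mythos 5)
Your proposal is correct and follows essentially the same route as the paper's proof: unfold the definition of the reduced-limit metric to get $\limsup_{n \to \calF} d_n(x,y)$, then apply Proposition~\ref{lem:limit-exists} (valid because the free filter $\calF$ contains the cofinite filter) to identify this with $\lim_{n \to \infty} d_n(x,y) = d_\omega(x,y)$. Your added remarks on injectivity and on the extended case $d_\omega(x,y) = \infty$ are sensible refinements that the paper's proof leaves implicit, but they do not change the argument.
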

\begin{proof}
  For $x, y \in X$, we have
  \begin{align*}
    d([x]_n, [y]_n)
    &= \liminf_{n \to \calF} d_n(x, y) \tag*{(by definition)} \\
    &= \lim_{n \to \infty} d_n(x, y) \tag*{(Lemma~\ref{lem:limit-exists})} \\
    &= d_{\omega}(x, y) \quad . \tag*{(by definition)}
  \end{align*}
  Therefore $f$ is an embedding.
\end{proof}

At first sight, ultralimit appears to be just a technical generalization of
classical ultraproduct. However it can be understood from a more topological (or metric) point of view
for compact spaces. First we review the Hausdorff distance.

\begin{definition}[e.g.\ \cite{Gromov1999}]
  Let $(X, d)$ be a metric space.
  For $x \in X$ and $A, B \subset X$, we define
  $d(x, A)$ and $d_H^X(A, B)$ as follows.
  \begin{align*}
    d(x, A) &:= \inf_{a \in A} d(x, a)  \\
    d_H^X(A, B) &:= \max \biggl( \sup_{x \in A} d(x, B),\, \sup_{y \in B} d(y, A) \biggr)
  \end{align*}
  This construction defines a metric on the set of closed subsets of $X$,
  which is called the \emph{Hausdorff metric}.
\end{definition}

The Hausdorff metric gives a way to measure a distance \emph{between subsets} of a fixed metric space.
Using this metric, we can define a distance \emph{between metric spaces}
by embedding.

\begin{definition}[\cite{Gromov1999}]
  For compact metric spaces $X$ and $Y$,
  the \emph{Gromov-Hausdorff distance}
  $d_{GH}(X, Y)$ is defined to be the infimum of $d_H^Z(f(X), g(Y))$
  where $Z$ is a metric space and
  $f \colon X \to Z$ and $g \colon Y \to Z$ are embeddings.
  This defines a metric on the set of (isometric classes of) compact metric spaces,
  which is called the \emph{Gromov-Hausdorff metric}.
\end{definition}

The ultralimit of compact metric spaces is indeed a generalization of
the limit in the Gromov-Hausdorff metric.

\begin{proposition}[\cite{Kapovich1995}]
  \label{ultralimit-ghlimit}
  Let $(X_n)_{n=1}^{\infty}$ and $X$ be compact metric spaces
  and $\calF$ be a free ultrafilter on $\bbN$.
  If $(X_n)_{n=1}^{\infty}$ converges to $X$ in the Gromov-Hausdorff metric,
  then $X$ is isometric to $\prod_{n}^\calF X_n$. \qed
\end{proposition}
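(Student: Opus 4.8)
The plan is to exhibit $X$ as the ultralimit directly, by constructing a surjective distance-preserving map $\Phi \colon X \to \prod_n^{\calF} X_n$ and then invoking the elementary fact that a surjective isometric embedding is an isometry. First I would unwind the hypothesis. Since $(X_n)$ converges to $X$ in the Gromov--Hausdorff metric, for each $n$ I may choose a metric space $Z_n$ together with isometric embeddings $f_n \colon X_n \to Z_n$ and $g_n \colon X \to Z_n$ such that $d_H^{Z_n}(f_n(X_n), g_n(X)) < \e_n$, where $\e_n := d_{GH}(X_n, X) + 1/n \to 0$. The Hausdorff bound then provides, for every $x \in X$, a point $a_n(x) \in X_n$ with $d_{Z_n}(g_n(x), f_n(a_n(x))) < \e_n$, and I set $\Phi(x) := [(a_n(x))_n]$, the class of the sequence $(a_n(x))_n$ in the ultralimit. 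Throughout I use that $\calF$ is an ultrafilter, so that the $\limsup$ defining $\theta$ is an honest $\calF$-limit and all the limit computations below are valid.

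Next I would verify that $\Phi$ is a well-defined isometric embedding. Two applications of the triangle inequality in $Z_n$, together with the fact that $f_n$ and $g_n$ are isometric embeddings, give
\[
  \abs{d_n(a_n(x), a_n(y)) - d_X(x, y)} < 2\e_n
\]
for all $x, y \in X$. Since $\e_n \to 0$, the ordinary limit of $d_n(a_n(x), a_n(y))$ is $d_X(x, y)$, and because $\calF$ is free it contains the cofinite filter, so Lemma~\ref{lem:limit-exists} yields $\lim_{n \to \calF} d_n(a_n(x), a_n(y)) = d_X(x, y)$. Hence $\theta((a_n(x))_n, (a_n(y))_n) = d_X(x,y)$, so $\Phi$ preserves distances. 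Well-definedness follows from the same estimate applied to two choices of $a_n(x)$: their $d_n$-distance is $< 2\e_n \to 0$, so they determine the same class.

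The substantial part is surjectivity, and this is where compactness of $X$ is used decisively. Given an arbitrary class $[(z_n)_n] \in \prod_n^{\calF} X_n$, the Hausdorff bound applied in the opposite direction produces $w_n \in X$ with $d_{Z_n}(f_n(z_n), g_n(w_n)) < \e_n$. Since $X$ is compact, every ultrafilter on $X$ converges, so the sequence $(w_n)$ admits an $\calF$-limit $x \in X$, i.e.\ $\set{n}{d_X(w_n, x) \le \e} \in \calF$ for every $\e \gt 0$, whence $\lim_{n \to \calF} d_X(w_n, x) = 0$ by Lemma~\ref{lem:usual-limit}. A final triangle-inequality estimate in $Z_n$ gives $d_n(z_n, a_n(x)) < d_X(w_n, x) + 2\e_n$, and taking $\calF$-limits (using Lemma~\ref{lem:sup-plus} and $\lim_{n \to \calF} \e_n = 0$) shows $\theta((z_n)_n, (a_n(x))_n) = 0$, so $[(z_n)_n] = \Phi(x)$.

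I expect surjectivity to be the principal obstacle: the embedding part is routine bookkeeping with triangle inequalities, but producing a preimage requires extracting a \emph{genuine} point of $X$ from the sequence $(w_n)$, and this is exactly where compactness is indispensable---an $\calF$-limit of $(w_n)$ need not exist in a non-compact space, and the statement indeed fails without compactness. Once $\Phi$ is shown to be surjective and distance-preserving, it is an isometry, completing the proof.
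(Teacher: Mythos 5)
Your proof is correct. Note, however, that the paper itself offers no proof of this proposition: it is stated with a \qed and imported wholesale from \cite{Kapovich1995}, so there is no internal argument to compare yours against. Your write-up is essentially the standard argument for this fact, and it has the virtue of being entirely self-contained relative to the paper's toolkit: realizing each $d_{GH}(X_n,X)$ up to $1/n$ by embeddings $f_n, g_n$ into a common space $Z_n$, defining $\Phi$ by near-projections $a_n(x)$, and then using only Proposition~\ref{lem:limit-exists} (free filters refine ordinary limits), Lemma~\ref{lem:usual-limit}, and Lemma~\ref{lem:sup-plus} to control the $\calF$-limits. The two halves are cleanly separated in the right way: distance preservation of $\Phi$ needs only the triangle-inequality estimate $\abs{d_n(a_n(x),a_n(y)) - d_X(x,y)} \lt 2\e_n$ together with freeness of $\calF$, while surjectivity is exactly where compactness of $X$ enters, via convergence of the pushforward ultrafilter of $(w_n)$ --- and this is also where the ultrafilter hypothesis (rather than a mere free filter) is genuinely used. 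One small stylistic remark: your parenthetical that the $\limsup$ defining $\theta$ is ``an honest $\calF$-limit'' because $\calF$ is an ultrafilter is true but never needed; every limit you take is justified directly by the three lemmas above, so the proof would go through verbatim with $\limsup$ throughout.
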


\begin{remark}
  The converse of Proposition~\ref{ultralimit-ghlimit} does not hold.
  Take two distinct metric spaces $X$ and $Y$,
  and consider the sequence $(X, Y, X, Y, \ldots)$.
  The ultralimit always exists but this sequence
  does not have a limit with respect to the Gromov-Hausdorff metric.
\end{remark}

The ultralimit construction preserves some metric and topological properties of metric spaces;
see \cite{VanDenDries1984, Kapovich1995} for more details and examples.

\begin{proposition}
  Let $\calF$ be a filter on $I$ and $(X_i)_{i \in I}$ be a family of metric spaces.
  If each $X_i$ is compact, then $\prod_{i}^{\calF} X_i$ is also compact.
\end{proposition}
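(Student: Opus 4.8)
The plan is to prove compactness of $Y := \prod_i^{\calF} X_i$ through sequential behaviour: for a metric space, compactness is equivalent to the condition that every sequence admits a cluster point. So let $([y^n])_{n \in \bbN}$ be a sequence in $Y$, with representatives $y^n = (y^n_i)_{i \in I} \in \prod_{i} X_i$, and fix a free ultrafilter $\calG$ on $\bbN$. For each coordinate $i$, the sequence $(y^n_i)_n$ lives in the compact space $X_i$, so its ultrafilter limit $z_i := \lim_{n \to \calG} y^n_i$ exists in $X_i$; this is the point-valued analogue, for compact metric spaces, of the existence of $\lim_{i \to \calU}$ established at the end of Section~\ref{sec:filter-limit}. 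Assembling these coordinates yields $z = (z_i)_i \in \prod_i X_i$, and hence a point $[z] \in Y$, which is the natural candidate cluster point of $([y^n])$.

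It then remains to verify that $[z]$ really is a cluster point, i.e.\ (by Lemma~\ref{lem:usual-limit}) that $\set{n \in \bbN}{\theta([y^n],[z]) \le \e} \in \calG$ for every $\e > 0$, which amounts to
\[
  \lim_{n \to \calG} \theta([y^n],[z]) = \lim_{n \to \calG}\; \limsup_{i \to \calF} d_i(y^n_i, z_i) = 0 .
\]
Coordinatewise this is painless: for each fixed $i$, continuity of $d_i$ together with $y^n_i \to z_i$ along $\calG$ gives $\lim_{n \to \calG} d_i(y^n_i, z_i) = d_i(z_i, z_i) = 0$, in the spirit of Proposition~\ref{cont-limit}. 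The main obstacle is the passage from this pointwise-in-$i$ vanishing to the vanishing of the outer $\calG$-limit of the inner $\limsup_{i \to \calF}$: the two filter limits must be interchanged, and such an interchange is not valid without some uniformity of the coordinatewise convergence in $i$.

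Supplying that uniformity is, I expect, the real content of the statement, and it is where the compactness of the factors has to be used in a genuinely uniform — not merely coordinatewise — fashion. Concretely I would fix $\e > 0$, choose a finite $\e$-net in each $X_i$, and try to reduce to finitely many ``net patterns'' occurring $\calF$-cofinally, so that the finitely many relevant $\calG$-limits can be taken simultaneously and $\limsup_{i \to \calF} d_i(y^n_i, z_i)$ pushed below $\e$ for $\calG$-many $n$. The delicate point is that pointwise compactness controls each coordinate separately but says nothing about how these $\e$-nets grow with $i$; the estimate therefore hinges on a uniform total-boundedness bound across the family $(X_i)_i$, and I would expect to have to isolate and invoke such a uniform bound to close the argument. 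Once the inner limit superior is controlled uniformly, $[z]$ is a cluster point of $([y^n])$, hence — $Y$ being metric — the limit of some subsequence, and $Y$ is compact.
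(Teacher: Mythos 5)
Your proof stalls exactly where you say it does, and the gap cannot be closed: the step from coordinatewise vanishing $\lim_{n \to \calG} d_i(y^n_i, z_i) = 0$ (for each fixed $i$) to $\lim_{n \to \calG} \limsup_{i \to \calF} d_i(y^n_i, z_i) = 0$ needs uniformity in $i$, which is precisely the uniform total-boundedness bound you suspect is required, and no such bound follows from the hypotheses. In fact the proposition as stated is false, so no argument can succeed. For ultrafilters: let $X_i = \{1, \ldots, i\}$ with the discrete metric $d_i(a,b) = 1$ for $a \neq b$ (each $X_i$ is finite, hence compact), and let $\calU$ be a free ultrafilter on $\bbN$. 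The points $x^k = [(\min(k,i))_i] \in \prod_i^{\calU} X_i$, $k \in \bbN$, satisfy $\theta(x^k, x^{k'}) = 1$ for $k \neq k'$, because $\min(k,i) = \min(k',i)$ only on the finite set $\{i \le \min(k,k')\}$, which is not in $\calU$; so the ultraproduct contains infinitely many points pairwise at distance $1$ and is not totally bounded. For general filters the failure is even simpler: $\calF = \{I\}$ is a filter in the paper's sense, and the reduced product of $X_i = [0,1]$, $i \in \bbN$, is then $[0,1]^{\bbN}$ with the supremum metric, where the ``unit vectors'' are pairwise at distance $1$. Your $\e$-net plan does become a proof if you add the uniform-compactness hypothesis (a common diameter bound and, for each $\e > 0$, a bound $N(\e)$ on the size of $\e$-nets valid for all $i$) and restrict to ultrafilters: choosing nets $F_i$ with $\abs{F_i} \le N(\e)$, every $F_i$-valued family agrees $\calU$-largely with one of at most $N(\e)$ fixed families, so their classes form a finite $\e$-net of $\prod_i^{\calU} X_i$; total boundedness plus completeness of the ultraproduct (Proposition~\ref{thm:uprod-comp-comp}, compact factors being complete) then gives compactness.

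Comparing with the paper's own proof is instructive here, because the paper's two-line argument silently steps over the very obstruction you hit. It invokes Tychonoff's theorem for $\prod_i X_i$ and then says the canonical surjection $\pi \colon \prod_i X_i \to \prod_i^{\calF} X_i$ is non-expansive, hence continuous. But Tychonoff gives compactness in the \emph{product topology}, whereas non-expansiveness gives continuity only with respect to the \emph{supremum-metric topology}, which is strictly finer and generally non-compact; the paper itself warns in Section~\ref{sec:metric-sp} that the supremum metric need not induce the product topology. In the counterexamples above, $\pi$ cannot be continuous from the product topology, since its image is not compact. So your diagnosis is not an artifact of the sequential approach: it identifies the genuine missing hypothesis (uniform compactness, with $\calF$ an ultrafilter), under which both your argument and a corrected version of the statement go through.
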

\begin{proof}
  We know that $\prod_{i} X_i$ is compact by Tychonoff's theorem.
  The canonical surjection $\pi \colon \prod_{i} X_i \to \prod_{i}^{\calF} X_i$ is non-expansive,
  then it is also continuous.
  Therefore its image $\prod_{i}^{\calF} X_i$ is also compact.
\end{proof}

\begin{proposition}[e.g.\ \cite{Yaacov2007}]
  \label{thm:uprod-comp-comp}
  Let $\calU$ be an ultrafilter on $I$ and $(X_i)_{i \in I}$ be a family of metric spaces.
  If each $X_i$ is complete, then $\prod_{i}^{\calU} X_i$ is also complete.
\end{proposition}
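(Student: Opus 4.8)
The plan is to verify completeness directly, by showing that every Cauchy sequence in $\prod_i^{\calU} X_i$ has a limit. Since in a metric space a Cauchy sequence converges as soon as one of its subsequences does, I would first extract a subsequence and assume that the given Cauchy sequence $([a^{(k)}])_{k=0}^{\infty}$ satisfies $d([a^{(k)}], [a^{(k+1)}]) < 2^{-k}$, where each $a^{(k)} = (a^{(k)}_i)_{i \in I}$ is a chosen representative in $\prod_i X_i$. Because $\calU$ is an ultrafilter, $\lim_{i \to \calU}$ exists and coincides with the $\limsup$ defining $\theta$, so $d([a^{(k)}], [a^{(k+1)}]) = \lim_{i \to \calU} d_i(a^{(k)}_i, a^{(k+1)}_i)$; Lemma~\ref{lem:usual-limit} then yields $J_k := \set{i \in I}{d_i(a^{(k)}_i, a^{(k+1)}_i) < 2^{-k}} \in \calU$.

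Next I would construct a candidate limit $b = (b_i)_{i \in I}$ by truncation. Put $L_n = \bigcap_{k=0}^{n-1} J_k$, so that $L_0 = I$ (empty intersection) and $L_0 \supseteq L_1 \supseteq \cdots$ is a decreasing chain with every $L_n \in \calU$, as a finite intersection of sets in $\calU$. For each $i$ let $n(i) \in \{0, 1, \ldots\} \cup \{\infty\}$ be the largest $n$ with $i \in L_n$. If $n(i) = \infty$, then $d_i(a^{(k)}_i, a^{(k+1)}_i) < 2^{-k}$ for all $k$, so $(a^{(k)}_i)_k$ is Cauchy in $X_i$ and I set $b_i = \lim_k a^{(k)}_i$, which exists precisely because $X_i$ is complete; otherwise I freeze the fibre at its last good term and set $b_i = a^{(n(i))}_i$.

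The conclusion then follows from a telescoping estimate. For any $m$ and any $i$ with $n(i) > m$, the triangle inequality bounds $d_i(a^{(m)}_i, b_i)$ by $\sum_{k \ge m} 2^{-k} = 2^{-m+1}$, uniformly in both the limit case and the frozen case. Since $\set{i \in I}{n(i) > m} = L_{m+1} \in \calU$, this bound holds on a set in $\calU$, so $d([a^{(m)}], [b]) = \lim_{i \to \calU} d_i(a^{(m)}_i, b_i) \le 2^{-m+1}$ by Lemma~\ref{lem:usual-limit}; letting $m \to \infty$ shows $[a^{(m)}] \to [b]$, and hence the original sequence converges.

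The only genuinely delicate point, and where completeness is indispensable, is that an ultrafilter is closed only under \emph{finite} intersections: $\bigcap_n L_n$ need not belong to $\calU$, so the fibrewise sequence $(a^{(k)}_i)_k$ need not be Cauchy for $\calU$-many $i$, and one cannot simply take a pointwise limit everywhere. The truncation at $a^{(n(i))}_i$ is exactly what repairs this, the crucial observation being that the stage-$m$ error is already controlled on $L_{m+1} \in \calU$ --- far enough out that the prescribed bound $2^{-m+1}$ holds whether $b_i$ arose from an honest limit or from freezing --- which is all the ultrafilter limit can detect.
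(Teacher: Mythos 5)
Your proof is correct and takes essentially the same approach as the paper's: your sets $L_n$ are exactly the paper's $A_n$, and your candidate limit $b$ (freezing at $a^{(n(i))}_i$ on $L_{n(i)} \setminus L_{n(i)+1}$, taking the fibrewise limit on $\bigcap_n L_n$ using completeness) is precisely the paper's $y$, with the same final estimate $d([a^{(m)}],[b]) \le 2^{-m+1}$. You merely make explicit several steps the paper leaves implicit, such as the membership of $J_k$ and $L_n$ in $\calU$ and the telescoping bound in both the frozen and limit cases.
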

\begin{proof}
  Let $(x^n)_{n=0}^{\infty}$ be a sequence in $\prod_{i}^\calU X_i$
  where $x^n = [x^n_i]_{i}$, and assume $d(x^n, x^{n+1}) \lt 2^{-n}$ holds for each $n \in \bbN$.
  We show $(x^n)_{n=0}^{\infty}$ converges.
  Let $A_n \subset I$ be the set defined by:
  \begin{align*}
    A_n &= \set{i \in I}{\text{$d(x^{k}_i, x^{k+1}_i) \lt 2^{-k}$ for all $0 \le k \lt n$}} \quad.
  \end{align*}
  Given $i \in I$, define $y_i \in X_i$ as follows: $y_i = x^n_i$
  if $i \in A_n \setminus A_{n+1}$ for some $n$, and otherwise $y_i = \lim_{n \to \infty} x^n_i$.
  We use the completeness of $X_i$ here.
  Then $d(x^n, y) \le 2^{-n+1}$ holds for each $n$, which concludes $\lim_{n \to \infty} x^n = y$.
\end{proof}

In fact, the ultraproduct of a countable family of metric spaces is
automatically complete, even if each metric space is not complete.

\begin{proposition}[\cite{Bridson1999}]\label{thm:cble-uprod-comp}
  Let $\calU$ be a free ultrafilter on $\bbN$
  and $(X_i)_{i \in I}$ be a family of (not necessarily complete) metric spaces.
  Then $\prod_{i}^\calU X_i$ is complete.
\end{proposition}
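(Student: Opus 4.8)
The plan is to follow the proof of Proposition~\ref{thm:uprod-comp-comp} as far as possible, but to replace the appeal to completeness of the factors by a diagonal argument that exploits the countability of the index set $\bbN$ together with the freeness of $\calU$. As there, it suffices to treat a Cauchy sequence $(x^n)_{n=0}^{\infty}$ in $\prod_i^{\calU} X_i$ satisfying $d(x^n, x^{n+1}) < 2^{-n}$, since an arbitrary Cauchy sequence has such a subsequence and converges as soon as the subsequence does. Writing $x^n = [x^n_i]_i$, I would first set
\[
  A_n = \set{i \in \bbN}{d_i(x^k_i, x^{k+1}_i) < 2^{-k}\ \text{for all}\ 0 \le k < n}
\]
and note that each $A_n \in \calU$: since $d(x^k, x^{k+1}) = \lim_{i \to \calU} d_i(x^k_i, x^{k+1}_i) < 2^{-k}$, Lemma~\ref{lem:usual-limit} supplies a set in $\calU$ on which $d_i(x^k_i, x^{k+1}_i) < 2^{-k}$, and $A_n$ is a finite intersection of such sets.

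The new ingredient, and the step that removes the need for completeness, is to truncate these sets so that their intersection becomes empty. I would put
\[
  B_n = A_n \cap \set{i \in \bbN}{i \ge n}.
\]
Because $\calU$ is free it contains the cofinite filter, so each $\set{i \in \bbN}{i \ge n} \in \calU$ and hence $B_n \in \calU$; moreover $B_0 \supseteq B_1 \supseteq \cdots$ and, crucially, $\bigcap_n B_n = \emptyset$, since $i \in B_n$ forces $i \ge n$. Consequently, for every $i$ the level $\nu(i) := \max\set{n}{i \in B_n}$ is a well-defined \emph{finite} number, and I can define $y_i := x^{\nu(i)}_i$ with no reference to any limit inside $X_i$. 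This is precisely where the argument for incomplete factors diverges from Proposition~\ref{thm:uprod-comp-comp}: there the points of $\bigcap_n A_n$ forced one to take $\lim_n x^n_i$ in $X_i$, whereas here the truncation guarantees that no such limit is ever needed.

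It then remains to check that $y = [y_i]_i$ is the limit of $(x^n)$. Fixing $n$ and taking any $i \in B_n$, one has $\nu(i) \ge n$, and since $i \in B_{\nu(i)} \subseteq A_{\nu(i)}$ the telescoping estimate
\[
  d_i(x^n_i, y_i) = d_i(x^n_i, x^{\nu(i)}_i) \le \sum_{k=n}^{\nu(i)-1} d_i(x^k_i, x^{k+1}_i) < \sum_{k=n}^{\infty} 2^{-k} = 2^{-n+1}
\]
holds (with the empty sum handling $\nu(i) = n$). As $B_n \in \calU$, the definition of the ultralimit pseudometric yields $d(x^n, y) = \lim_{i \to \calU} d_i(x^n_i, y_i) \le 2^{-n+1}$, so $x^n \to y$. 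The main obstacle is not any individual estimate but the design of the sets $B_n$: one must simultaneously keep $B_n \in \calU$ (so the telescoping bound survives in the ultralimit) and force $\bigcap_n B_n = \emptyset$ (so that $\nu$ is everywhere finite), and it is exactly the countability of $\bbN$ together with the freeness of $\calU$ that makes these two demands compatible.
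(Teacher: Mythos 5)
Your proof is correct and follows essentially the same route as the paper's: truncate the sets $A_n$ using freeness of $\calU$ to get $B_n \in \calU$ with $\bigcap_n B_n = \emptyset$, define $y_i$ by the level at which $i$ drops out, and verify $d(x^n, y) \le 2^{-n+1}$ by telescoping over $B_n$. If anything, your write-up is slightly more careful than the paper's, which defines $y_i$ via $A_n \setminus A_{n+1}$ (leaving $y_i$ undefined for $i \in \bigcap_n A_n$) where your $\nu(i) = \max\set{n}{i \in B_n}$ correctly uses the truncated sets.
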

\begin{proof}
  Define $A_n$ as in Proposition~\ref{thm:uprod-comp-comp}
  and $B_n = A_n \setminus \{ 0, \ldots, n\}$.
  We know $B_n \in \calU$ since $\calU$ is free,
  and $\bigcap_{n=0}^{\infty} B_n = \emptyset$ holds by construction.
  We define $y_i = x^n_i$ where $n$ satisfies $i \in A_n \setminus A_{n+1}$,
  and then we have $\lim_{n \to \infty} x^n = y$ as Proposition~\ref{thm:uprod-comp-comp}.
\end{proof}

In the following corollary, you can find an immediate application of Proposition~\ref{thm:cble-uprod-comp}.
We can prove the existence of the completion of a metric space.

\begin{corollary}\label{compl-via-ultraprod-metsp}
  Given a metric space $X$, its completion exists.
\end{corollary}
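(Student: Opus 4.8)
The plan is to realize a completion of $X$ as a closed subspace of a suitable ultraproduct, exploiting the automatic completeness supplied by Proposition~\ref{thm:cble-uprod-comp}. First I would fix a free ultrafilter $\calU$ on $\bbN$, whose existence follows from the ultrafilter lemma, and form the ultraproduct $Y = \prod_{n}^{\calU} X$ of the constant family in which every factor equals $X$. By Proposition~\ref{thm:cble-uprod-comp}, the space $Y$ is complete, regardless of whether $X$ itself is complete; this is precisely the feature that makes the countable ultraproduct the right tool here.

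Next I would exhibit an isometric embedding of $X$ into $Y$ via the diagonal map $f \colon X \to Y$ sending $x$ to the class $[(x, x, \ldots)]_n$ of the constant sequence. To verify that $f$ is an isometry, I compute, for $x, y \in X$,
\[
  d(f(x), f(y)) = \limsup_{n \to \calU} d(x, y) = d(x, y),
\]
since the sequence $(d(x, y))_{n}$ is constant in $n$, so both its $\liminf$ and $\limsup$ along $\calU$ equal $d(x, y)$. In particular $f$ is injective and distance-preserving.

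Finally I would take $\widehat{X}$ to be the closure of $f(X)$ inside $Y$. A closed subset of a complete metric space is itself complete, so $\widehat{X}$ is complete, and $f(X)$ is dense in $\widehat{X}$ by the definition of closure. Hence $(\widehat{X}, f)$ is a metric space that is complete and contains $X$ as a dense isometric subspace, i.e.\ a completion of $X$.

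Once Proposition~\ref{thm:cble-uprod-comp} is available, the construction is essentially forced, so I do not expect a serious obstacle. The only point demanding care is the isometry computation, which rests on the elementary fact that the ultralimit of a constant family returns that constant (compare the principal case $\lim_{i \to \calF_k} a_i = a_k$). I would also emphasize that completeness of $X$ is never invoked, which is exactly why the ultraproduct route succeeds in full generality.
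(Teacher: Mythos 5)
Your proposal is correct and follows exactly the paper's own argument: fix a free ultrafilter on $\bbN$, form the ultrapower $\prod_{n}^{\calU} X$, note it is complete by Proposition~\ref{thm:cble-uprod-comp}, embed $X$ isometrically via the diagonal map, and take the closure of the image. The paper merely states this more tersely; your added verification that the diagonal embedding is isometric fills in a routine detail it leaves implicit.
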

\begin{proof}
  Let $\calU$ be some free ultrafilter on $\bbN$.
  Consider the ultrapower $\prod_{n}^{\calU} X$
  and a canonical map $\iota \colon X \to \prod_{n}^{\calU} X$.
  Since $\iota$ is isometric and $\prod_{n}^{\calU} X$ is complete,
  the closure of $\iota(X)$ is the completion of $X$.
\end{proof}

Note that we use the existence of real nubmers in Theorem~\ref{thm:cble-uprod-comp},
so Corollary~\ref{compl-via-ultraprod-metsp} cannot entirely replace
the usual construction of completions by Cauchy sequences.

\section{Metric and Quantitative Algebra}
\label{ch:malg-qalg}

In this section, we introduce the notion of metric algebra and quantitative algebra.
We also introduce some elementary constructions of metric algebras
such as subalgebra, product and quotient.
These constructions are used in the metric version of the variety theorem.

\subsection{Metric and Quantitative Algebra}

By combining metric structures and $\Sigma$-algebraic structures,
we acquire the definitions of metric algebra. They go as follows.

\begin{definition}[\cite{Weaver1995}] \mbox{}
  \begin{itemize}
    \item A \emph{metric algebra} is a tuple $A = (A, d, (\sigma^A)_{\sigma \in \Sigma})$
    where $(A, d)$ is a metric space and $(A, (\sigma^A)_{\sigma \in \Sigma})$
    is a $\Sigma$-algebra.
    We denote the class of metric algebras by $\mathcal{M}$.

    \item A map $f \colon A \to B$ between metric algebras is called
    a \emph{homomorphism} if $f$ is $\Sigma$-homomorphic and non-expansive.

    \item A \emph{subalgebra} of a metric algebra $A$
    is a subalgebra (as $\Sigma$-algebra) equipped with the induced metric.
    An \emph{embedding} is an isometric homomorphism.

    \item The \emph{product} of metric algebras is
    the product (as $\Sigma$-algebras) equipped with the supremum metric.

    \item A \emph{quotient} of a metric algebra $A$ is a pair $(B, \pi)$
    where $B$ is a metric algebra and $\pi \colon A \epi B$ is a surjective homomorphism.
  \end{itemize}
\end{definition}

The definition of metric algebra says nothing about the relationship
between its metric structure and its algebraic structure.
One natural choice is to require their operations to be non-expansive,
which leads us to quantitative algebra.

\begin{definition}[\cite{Mardare2016}]
  A metric algebra $A$ is a \emph{quantitative algebra} if each
  $\sigma^{A} \colon A^n \to A$ is non-expansive for each $\sigma \in \Sigma_n$,
  where $A^n$ is equipped with the supremum metric.
  We denote the class of quantitative algebras by $\mathcal{Q}$.
\end{definition}

The non-expansiveness requirement for operations
is categorically natural since it says that a quantitative algebra
is an algebra in the category of metric spaces and non-expansive maps
in the sense of Lawvere theory.
However this formulation does not allow \emph{normed vector spaces},
since the scalar multiplications are not non-expansive.
More extremely, even $(\bbR, +)$ is not quantitative
since $d(0+0, 1+1) \gt \max(d(0, 1), d(0, 1))$.
Thus basically we try to build our theory for general metric algebras.

Instead of respectively discussing the variety theorems for metric algebras and quantitative algebras,
we show the variety theorem \emph{relative to} a given class $\cat{K}$.
In that case, $\cat{K}$ is well-behaved when it is a \emph{prevariety}.
For example, $\cat{Q}$ and $\cat{M}$ are prevarieties.

\begin{definition}[\cite{Weaver1995}]
  A class of metric algebras is called a \emph{prevariety}
  if it is closed under subalgebras and products.
\end{definition}

\newpara

In \cite{Mardare2017}, Mardare et al.\ introduce the notion of \emph{$\kappa$-reflexive} homomorphism
for a cardinal $\kappa \le \aleph_1$,
and give the characterization theorem of \emph{$\kappa$-varieties} by $\kappa$-reflexive quotients.

\begin{notation}
  For a set $B$ and a cardinal $\kappa$, we write $A \subset_{\kappa} B$ when $A$ is a subset
  whose cardinality is smaller than $\kappa$.
  For example, $A \subset_{\aleph_0} B$ is a finite subset
  and $A \subset_{\aleph_1} B$ is an at most countable subset.
\end{notation}

\begin{definition}[\cite{Mardare2017}]
  \label{def:k-refl-homom}
  A surjective homomorphism $p \colon A \to B$ between metric algebras
  is $\kappa$-reflexive if, for any subset $B' \subset_\kappa B$,
  there exists a subset $A' \subset_\kappa A$
  such that $p$ restricts to a bijective isometry $p|_{A'} \colon A' \to B'$.
\end{definition}

We also use a variant of $\kappa$-reflexive homomorphism in our variety theorem,
but our notion is \emph{unbounded}; we do not impose any size condition.

\begin{definition}
  \label{def:refl-homom}
  A surjective homomorphism $p \colon A \to B$ between metric algebras is \emph{reflexive}
  if there exists a subset $A' \subset A$ such that
  $p |_{A'} \colon A' \to B$ is a bijective isometry.
  Equivalently $p$ is reflexive if and only if there exists
  an embedding $s \colon B \to A$ \emph{as metric space} such that $p \circ s = \id_B$.
  Note that $s$ is not required to be homomorphic.
\end{definition}

In the rest of this paper, when we say ``a class $\cat{K}$ of metric algebras'',
we implicitly assume that $\cat{K}$ is closed under isomorphisms;
it is a natural assumption since we are interested in
properties of metric algebras, and they must be preserved by isomorphisms
between metric algebras.

\subsection{Congruential Pseudometric}
\label{sec:cong-quot}

The notion of quotient seems to be external and difficult to deal with.
For example, it is not trivial to see that the class of quotients of a metric algebra
(up to isomorphism) turns out to be a small set.

In the case of classical universal algebra,
there is a bijective correspondence between quotient algebras and \emph{congruences},
which enables us to treat quotients internally and concretely.
To extend this correspondence to the metric case,
we are led to the notion of \emph{congruential pseudometric} instead of
the usual congruence in classical universal algebra.
The idea of using pseudometrics as the metric version of congruences
also appears in \cite{Mardare2017}.

\begin{definition}\label{def:cong}
  A \emph{congruential pseudometric} of a metric algebra $A$ is a pseudometric $\theta$ on $A$
  such that $\theta(x, y) \le d^A(x, y)$ holds for each $x, y \in A$
  and the equivalence relation $\theta(x, y) = 0$ is a congruence as $\Sigma$-algebra.
  We think that the set of congruential pseudometrics is ordered by the \emph{reversed pointwise order}:
  for $\theta_1$ and $\theta_2$, we say $\theta_1 \cle \theta_2$
  when $\theta_1(a, b) \ge \theta_2(a, b)$ holds for any $a, b \in A$.

  Given a congruential pseudometric $\theta$, the metric space $A \quot \theta$
  is viewed as a metric algebra by the algebra structure defined by
  $\sigma([a_1], \ldots, [a_n]) = [\sigma(a_1, \ldots, a_n)]$
  and equipped with a canonical homomorphic projection $\pi \colon A \to A \quot \theta$.
\end{definition}

We adopt the reversed pointwise order for the consistency with the classical case.
In the classical case, the set of congruences are ordered by inclusion, and
for two congruences $\theta_1 \subset \theta_2$ on $A$, we have a canonical
surjective homomorphism $\pi \colon A \quot \theta_1 \onto A \quot \theta_2$.
In the metric case, for the congruential pseudometrics $\theta_1 \cle \theta_2$,
their metric identifications satisfy $\sim_{\theta_1} \subset \sim_{\theta_2}$,
and we have a surjective homomorphism $\pi \colon A \quot \theta_1 \onto A \quot \theta_2$
between metric algebras.

As in classical universal algebra, we can prove the first isomorphism theorem
and  a bijective correspondence between quotients and congruences.
The other isomorphism theorems are presented in Section~\ref{ch:congruence}.

\begin{definition}
  Given a homomorphism $f \colon A \to B$ between metric algebras,
  \begin{itemize}
    \item The \emph{image of $f$} is a subalgebra of $B$ defined by $\im(f) = \set{f(a)}{a \in A}$.
    \item The \emph{kernel} of $f$ is a congruential pseudometric on $A$ that is defined by $\ker(f)(a, b) = d(f(a), f(b))$.
  \end{itemize}
\end{definition}

\begin{proposition}[First Isomorphism Theorem]
  \label{first-isom-thm}
  Let $f \colon A \to B$ be a homomorphism between metric algebras,
  and $\theta$ be a congruence on $A$.
  If $\theta \cle \ker(f)$,
  there exists a unique homomorphism $\bar{f} \colon A \quot \theta \to B$
  such that $\bar{f}([a]) = f(a)$ for all $a \in A$.
  Moreover if $\theta = \ker(f)$,
  then $\bar{f}$ is an isometry and
  the induced map $\bar{f} \colon A \quot \ker(f) \to \im(f)$
  is an isomorphism.
\end{proposition}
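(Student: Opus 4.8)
The plan is to mimic the classical proof of the first isomorphism theorem, with the defining inequality of the order $\cle$ doing the work of guaranteeing non-expansiveness. First I would define $\bar{f}$ on equivalence classes by $\bar{f}([a]) = f(a)$ and check well-definedness. The crucial observation is that, by the reversed pointwise order, the hypothesis $\theta \cle \ker(f)$ unfolds to $\theta(a, b) \ge \ker(f)(a, b) = d(f(a), f(b))$ for all $a, b \in A$. Hence if $[a] = [b]$, i.e.\ $\theta(a, b) = 0$, then $d(f(a), f(b)) = 0$; since the codomain $B$ carries a genuine metric rather than merely a pseudometric, this forces $f(a) = f(b)$, so $\bar{f}$ is well-defined.

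Next I would verify that $\bar{f}$ is a homomorphism of metric algebras. That it is $\Sigma$-homomorphic is immediate from the algebra structure on $A \quot \theta$ (namely $\sigma([a_1], \ldots, [a_n]) = [\sigma(a_1, \ldots, a_n)]$) together with the $\Sigma$-homomorphy of $f$. For non-expansiveness, recall from Proposition~\ref{thm:metric-id} that the induced metric on $A \quot \theta$ satisfies $\overline{\theta}([a], [b]) = \theta(a, b)$; the same inequality $\theta(a, b) \ge d(f(a), f(b))$ used above then yields $d(\bar{f}([a]), \bar{f}([b])) \le \overline{\theta}([a], [b])$ directly. Uniqueness of $\bar{f}$ is forced, since the projection $\pi \colon A \to A \quot \theta$ is surjective and so the requirement $\bar{f}([a]) = f(a)$ already determines $\bar{f}$ on all of $A \quot \theta$.

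Finally I would treat the case $\theta = \ker(f)$. Here the displayed inequality becomes an equality $\overline{\theta}([a], [b]) = \theta(a, b) = d(f(a), f(b)) = d(\bar{f}([a]), \bar{f}([b]))$, so $\bar{f}$ is an isometry, and in particular injective. Since every element of $\im(f)$ has the form $f(a) = \bar{f}([a])$, the corestriction $\bar{f} \colon A \quot \ker(f) \to \im(f)$ is a bijective isometric $\Sigma$-homomorphism; its set-theoretic inverse is then automatically isometric (because $\bar{f}$ is) and $\Sigma$-homomorphic (because $\bar{f}$ is a bijective $\Sigma$-homomorphism), hence a homomorphism, so $\bar{f}$ is an isomorphism of metric algebras.

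I do not anticipate a serious obstacle: the argument is a faithful transcription of the classical theorem. The only point requiring genuine care is the bookkeeping of the reversed pointwise order, where one must confirm that $\theta \cle \ker(f)$ orients the inequality $\theta(a,b) \ge d(f(a), f(b))$ in the single direction that simultaneously delivers well-definedness and non-expansiveness; and that passing from the pseudometric $\theta$ to the quotient metric $\overline{\theta}$ preserves these comparisons, which is exactly the content of the identity $\overline{\theta}([a],[b]) = \theta(a,b)$ from Proposition~\ref{thm:metric-id}.
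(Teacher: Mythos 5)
Your proposal is correct and follows essentially the same argument as the paper: well-definedness and non-expansiveness both come from the single inequality $\theta(a,b) \ge d(f(a),f(b))$ encoded by $\theta \cle \ker(f)$, and the case $\theta = \ker(f)$ turns this into an equality yielding the isometry onto $\im(f)$. The only difference is that you spell out the uniqueness, the $\Sigma$-homomorphism check, and the bijectivity-onto-image step, which the paper leaves implicit.
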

\begin{proof}
  First we show $\bar{f}([a]) = f(a)$ is well-defined.
  Assume $[a] = [b]$, i.e., $\theta(a, b) = 0$.
  Since $\theta \cle \ker(f)$, we have $d(f(a), f(b)) \le \theta(a, b) = 0$
  hence $f(a) = f(b)$.
  Next we show $\bar{f}$ is non-expansive; for $a, b \in A$,
  we have $d(\bar{f}([a]), \bar{f}([b])) = d(f(a), f(b))
  \le \theta(a, b) = d([a], [b])$.
  In the case $\theta = \ker(f)$, we also have
  $d(f(a), f(b)) = \theta(a, b)$ and then $d(\bar{f}([a]), \bar{f}([b])) = d([a], [b])$.
\end{proof}

\begin{corollary}
  For a metric algebra $A$, quotients of $A$ bijectively correspond with
  congruences on $A$ by $\theta \mapsto (A \quot \theta, \pi)$
  and $(B, p) \mapsto \ker(p)$. \qed
\end{corollary}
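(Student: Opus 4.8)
The plan is to show that the two assignments are mutually inverse, once we fix the precise meaning of ``bijective correspondence'': congruential pseudometrics on $A$ form a genuine set, whereas quotients are counted up to the natural notion of isomorphism of quotients, namely $(B, p) \cong (B', p')$ iff there is an isomorphism $\phi \colon B \to B'$ of metric algebras with $\phi \circ p = p'$. First I would check that both assignments are well-defined. For $\theta \mapsto (A \quot \theta, \pi)$ this is immediate from Definition~\ref{def:cong}: the space $A \quot \theta$ carries a metric algebra structure and $\pi$ is a surjective homomorphism, so $(A \quot \theta, \pi)$ is a quotient. For $(B, p) \mapsto \ker(p)$ I would verify that $\ker(p)$ is a congruential pseudometric: non-expansiveness of $p$ gives $\ker(p)(x, y) = d(p(x), p(y)) \le d(x, y)$, and the relation $\ker(p)(x, y) = 0$ is exactly $p(x) = p(y)$, i.e.\ the kernel congruence of the $\Sigma$-homomorphism $p$. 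I would also note that $\ker$ is invariant under isomorphism of quotients: an isometry $\phi$ witnessing $(B, p) \cong (B', p')$ satisfies $d(p(a), p(b)) = d(\phi p(a), \phi p(b)) = d(p'(a), p'(b))$, so $\ker(p) = \ker(p')$ and the assignment descends to isomorphism classes.

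Next I would verify the round-trip $\theta \mapsto (A \quot \theta, \pi) \mapsto \ker(\pi)$ returns $\theta$ exactly. By definition $\ker(\pi)(a, b) = d^{A \quot \theta}([a], [b])$, and by Proposition~\ref{thm:metric-id} the induced metric on $A \quot \theta$ satisfies $\overline{\theta}([a], [b]) = \theta(a, b)$. Hence $\ker(\pi) = \theta$ on the nose, so one composite is the identity on congruential pseudometrics.

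For the other round-trip $(B, p) \mapsto \ker(p) \mapsto (A \quot \ker(p), \pi)$ I would invoke the First Isomorphism Theorem (Proposition~\ref{first-isom-thm}) with $f = p$ and $\theta = \ker(p)$. Since $p$ is surjective we have $\im(p) = B$, so the theorem yields an isomorphism $\bar{p} \colon A \quot \ker(p) \to B$ with $\bar{p}([a]) = p(a)$, that is, $\bar{p} \circ \pi = p$. This is precisely an isomorphism of quotients $(A \quot \ker(p), \pi) \cong (B, p)$, so the second composite is the identity on isomorphism classes. Together the two round-trips establish the bijection.

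I do not expect a serious analytic obstacle here; the First Isomorphism Theorem already carries the weight. The only points requiring care are bookkeeping ones: pinning down the equivalence relation under which quotients are counted and checking that $\ker$ respects it, and confirming that the metric $\overline{\theta}$ on $A \quot \theta$ reproduces $\theta$ itself (not merely up to metric identification), which is exactly what Proposition~\ref{thm:metric-id} supplies.
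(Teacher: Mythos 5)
Your proof is correct and follows essentially the route the paper intends: the corollary carries no written proof precisely because it is meant as an immediate consequence of the First Isomorphism Theorem (Proposition~\ref{first-isom-thm}), which is exactly the key step in your second round-trip. The bookkeeping you add --- fixing the notion of isomorphism of quotients, checking that $\ker$ respects it, and verifying $\ker(\pi) = \theta$ via Proposition~\ref{thm:metric-id} --- is just the detail the paper leaves implicit.
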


\begin{proposition}
  For a metric algebra $A$ and a congruence $\theta$ on $A$,
  there is a lattice isomorphism between
  $\Con (A \quot \theta)$ and
  $(\Con A)_{\ge \theta} = \set{\rho \in \Con A}{\rho \ge \theta}$.
\end{proposition}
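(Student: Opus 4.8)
The plan is to mimic the classical correspondence theorem, transporting congruential pseudometrics along the canonical projection $\pi \colon A \to A \quot \theta$. Recall that the metric on $A \quot \theta$ is given by $d^{A \quot \theta}([a], [b]) = \theta(a, b)$. I will construct two mutually inverse, order-preserving maps
\[
  \Phi \colon \Con(A \quot \theta) \to (\Con A)_{\ge \theta}, \qquad
  \Psi \colon (\Con A)_{\ge \theta} \to \Con(A \quot \theta),
\]
where $\Phi$ pulls a pseudometric back along $\pi$ and $\Psi$ pushes one forward. Since an order isomorphism between lattices automatically preserves all existing meets and joins, it suffices to establish that $\Phi$ and $\Psi$ are mutually inverse poset isomorphisms for the reversed-pointwise order $\cle$; the lattice isomorphism then follows.

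For $\Phi$, given $\bar\rho \in \Con(A \quot \theta)$ I would set $\rho(a, b) = \bar\rho([a], [b])$. This $\rho$ is a pseudometric on $A$, being the pullback of a pseudometric along a map. Using $\bar\rho \cle d^{A \quot \theta}$ I obtain $\rho(a, b) = \bar\rho([a], [b]) \le \theta(a, b) \le d^A(a, b)$, which simultaneously verifies $\rho \cle d^A$ and the membership $\rho \cge \theta$ (i.e.\ $\rho(a,b) \le \theta(a,b)$ pointwise). Finally, the relation $\rho(a, b) = 0$ is the $\pi$-preimage of the congruence $\bar\rho([a],[b]) = 0$ on $A \quot \theta$, hence a congruence on $A$; thus $\rho \in (\Con A)_{\ge \theta}$.

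For $\Psi$, given $\rho \cge \theta$ I would set $\bar\rho([a], [b]) = \rho(a, b)$. The crucial point — and the main thing to check — is well-definedness, and this is exactly where the hypothesis $\rho \cge \theta$ is used. If $[a] = [a']$ and $[b] = [b']$, then $\theta(a, a') = \theta(b, b') = 0$, and since $\rho(a,b) \le \theta(a,b)$ pointwise we get $\rho(a, a') = \rho(b, b') = 0$; the triangle inequality then forces $\rho(a, b) = \rho(a', b')$. Once well-defined, $\bar\rho$ inherits the pseudometric axioms, satisfies $\bar\rho([a],[b]) = \rho(a,b) \le \theta(a,b) = d^{A \quot \theta}([a],[b])$, and its zero-relation is the descent to $A \quot \theta$ of the congruence $\rho(a,b) = 0$ (which contains that of $\theta$ and is $\theta$-saturated, again because $\rho \cge \theta$), hence a congruence on $A \quot \theta$.

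It then remains to observe that $\Phi$ and $\Psi$ are mutually inverse, which is immediate since both act by equating the two function values $\rho(a,b)$ and $\bar\rho([a],[b])$, and that both are order-preserving: a pointwise inequality between source pseudometrics translates directly into the same pointwise inequality between the images, so the relation $\cle$ is preserved in both directions. I expect no serious obstacle beyond the well-definedness bookkeeping for $\Psi$; the only place where the restriction to $(\Con A)_{\ge \theta}$ is genuinely needed is precisely to guarantee that the pushforward respects the identifications made by $\theta$.
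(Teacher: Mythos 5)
Your proposal is correct and takes essentially the same route as the paper's own proof: both transport pseudometrics along the canonical projection $\pi \colon A \to A \quot \theta$, with the pushforward's well-definedness secured by $\rho \cge \theta$ together with the triangle inequality, and the pullback recognized as a congruential pseudometric via the preimage of the zero-relation. You merely spell out more of the routine verifications (the bound by $d^A$, order-preservation, and the passage from poset to lattice isomorphism) that the paper leaves implicit.
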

\begin{proof}
  Given $\rho \in (\Con A)_{\ge \theta}$,
  we define a pseudometric $\bar{\rho}$ on $A \quot \theta$
  by $\bar{\rho}([a], [b]) = \rho(a, b)$. It is well-defined:
  if $\theta(a, a') = \theta(b, b') = 0$, then we have
  $\rho(a, a') = \rho(b, b') = 0$ by $\rho \le \theta$.
  Therefore $\rho(a, b) = \rho(a', b')$ by the triangle inequality.
  Conversely, given $\rho' \in \Con (A \quot \theta)$,
  we have a congruence on $A$ by pulling back $\rho'$
  along the canonical projection $[\blank] \colon A \to A \quot \theta$.

  It is easy to see that they are inverse and order-preserving.
\end{proof}

\subsection{Ultraproduct of Metric Algebras}
\label{sec:ultraprod}

As in classical first order logic, we want to define the reduced product
and the ultraproduct of a family of metric algebras. However there is a difficulty;
the pseudometric defined in Definition~\ref{def:ulralimit-metsp} is not necessarily
a congruential pseudometric, i.e., the relation $\theta(x, y) = 0$ is not preserved by operations.
For this reason, we think of ultraproduct as a partial operation, following \cite{Weaver1995}.

\begin{definition}[\cite{Weaver1995}]
  \label{def:reduced-prod-malg}
  Let $\calF$ be a filter on a set $I$.
  For a family $(A_i)_{i \in I}$ of metric algebras,
  the \emph{reduced product of $(A_i)_{i \in I}$ by $\calF$} \emph{exists}
  when the pseudometric $\theta(x, y) = \limsup_{i \to \calF} d^{A_i}(x, y)$
  on $\prod_{i} A_i$ is congruential.
  When it exists, it is defined by $\prod_{i}^{\calF} A_i = (\prod_{i} A_i) \quot \theta$.
  We denote the equivalence class of $(x_i)_i$ by $[x_i]_i$.

  If $\calF$ is an ultrafilter, it is called an \emph{ultraproduct}.
  Moreover when $A_i = A$ for each $i \in I$, it is called an \emph{ultrapower of $A$}.

  We say that a class $\cat{K}$ of metric algebras is closed under reduced products if,
  for any nonempty set $I$, any filter $\calF$ on $I$, and any family $(A_i)_{i \in I}$ of metric algebras
  in $\cat{K}$, the reduced product of $(A_i)_{i \in I}$
  by $\calF$ exists and belongs to $\cat{K}$.
  We define the closedness under ultraproducts in the same way.
  Note that we require the existence.
\end{definition}

We have no general method to judge whether the ultraproduct exists or not,
but there is a convenient sufficient condition.

\begin{proposition}[\cite{Weaver1995}]
  In Definition~\ref{def:reduced-prod-malg},
  the pseudometric $\theta$ is congruential
  if each $\Sigma$-operation is uniformly equicontinuous:
  for any $\sigma \in \Sigma_n$ and $\e \gt 0$,
  there is $\delta \gt 0$ such that
  for any $i \in I$ and $\vec{a}, \vec{b} \in A_i^n$
  with $d(\vec{a}, \vec{b}) \le \delta$, we have
  $d(\sigma(\vec{a}), \sigma(\vec{b})) \le \e$
  (Here we define $d(\vec{a}, \vec{b}) = \max_{k} d(a_k, b_k)$).

  In particular, when $A_i = A$ for all $i \in I$ and each $\sigma^{A}$ is uniformly continuous,
  then the ultrapower of $A$ by $\calF$ exists.
\end{proposition}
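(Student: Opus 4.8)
The plan is to verify the two defining conditions of a congruential pseudometric (Definition~\ref{def:cong}) for $\theta$ on $\prod_i A_i$: that $\theta$ is dominated by the supremum metric $d = d^{\prod_i A_i}$, and that the nullspace relation $\theta(x,y) = 0$ is a $\Sigma$-congruence. The first condition is immediate: since $I \in \calF$, the infimum defining $\limsup_{i \to \calF}$ ranges over a family that includes $J = I$, so $\theta((x_i)_i, (y_i)_i) = \limsup_{i \to \calF} d_i(x_i, y_i) \le \sup_{i \in I} d_i(x_i, y_i) = d((x_i)_i, (y_i)_i)$.

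The substance of the proof is the second condition. Since $\theta$ is a pseudometric, its nullspace is automatically an equivalence relation (Proposition~\ref{thm:metric-id}), so it remains to check compatibility with each operation $\sigma \in \Sigma_n$. Concretely, I would fix sequences $(x^k_i)_i$ and $(y^k_i)_i$ for $k = 1, \ldots, n$ with $\theta((x^k_i)_i, (y^k_i)_i) = 0$, write $\vec{x_i} = (x^1_i, \ldots, x^n_i)$ and $\vec{y_i} = (y^1_i, \ldots, y^n_i)$, and aim to show that $\limsup_{i \to \calF} d_i(\sigma(\vec{x_i}), \sigma(\vec{y_i})) = 0$, since $\sigma$ acts pointwise on the product. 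Because each distance is non-negative, $\theta((x^k_i)_i,(y^k_i)_i)=0$ forces $\lim_{i \to \calF} d_i(x^k_i, y^k_i) = 0$, which by Lemma~\ref{lem:usual-limit} says exactly that the sublevel set $J_k = \set{i \in I}{d_i(x^k_i, y^k_i) \le \delta}$ lies in $\calF$ for every $\delta \gt 0$.

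Now the uniform equicontinuity hypothesis enters. Given $\e \gt 0$, I choose the corresponding $\delta \gt 0$ valid simultaneously for all $i \in I$, and form $J = \bigcap_{k=1}^n J_k$, which belongs to $\calF$ because a filter is closed under finite intersections. For $i \in J$ we have $\max_k d_i(x^k_i, y^k_i) \le \delta$, i.e.\ the input vectors are $\delta$-close in the supremum metric on $A_i^n$, so uniform equicontinuity yields $d_i(\sigma(\vec{x_i}), \sigma(\vec{y_i})) \le \e$. Hence $J \subset \set{i \in I}{d_i(\sigma(\vec{x_i}), \sigma(\vec{y_i})) \le \e}$, so the latter set lies in $\calF$ for every $\e \gt 0$; applying Lemma~\ref{lem:usual-limit} in the reverse direction gives $\lim_{i \to \calF} d_i(\sigma(\vec{x_i}), \sigma(\vec{y_i})) = 0$, as desired.

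The step I expect to carry the weight is the choice of a single $\delta$ independent of $i$: this is precisely where \emph{uniform} equicontinuity, as opposed to mere pointwise continuity of each $\sigma^{A_i}$, is indispensable, since only a $\delta$ uniform in $i$ lets the finite intersection $J$ of the sublevel sets remain filter-large. The remaining manipulations are routine translations between ``$\limsup = 0$'' and the filter characterization of limits. Finally, the ``in particular'' clause is immediate: when $A_i = A$ for all $i$, the family $(\sigma^{A_i})_{i \in I}$ is the constant family at $\sigma^A$, and uniform continuity of the single map $\sigma^A$ is verbatim the uniform equicontinuity condition, so the hypothesis applies and the ultrapower exists.
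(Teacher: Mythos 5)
Your proof is correct. Note that the paper itself states this proposition without proof, attributing it to Weaver \cite{Weaver1995}, so there is no in-paper argument to compare against; your argument is the standard one and supplies exactly the missing content. The three steps you give --- the domination $\theta \le d$ via $I \in \calF$, the translation of ``$\theta = 0$'' into filter-largeness of the sublevel sets $J_k$, and the choice of a single $\delta$ independent of $i$ so that the finite intersection $\bigcap_{k=1}^n J_k$ remains in $\calF$ --- are precisely what the definition of congruential pseudometric (Definition~\ref{def:cong}) demands, and your closing observation that uniformity of $\delta$ in $i$ is the load-bearing hypothesis is exactly right: with only pointwise continuity of each $\sigma^{A_i}$, no single sublevel threshold would work across the index set and the argument would collapse.
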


\begin{corollary}
  The reduced product of $(A_i)_{i \in I}$ exists in the following cases.
  \begin{itemize}
    \item When $(A_i)_{i \in I}$ is a family of quantitative algebras.
    \item When $(A_i)_{i \in I}$ is a family of normed vector spaces.
  \end{itemize}
\end{corollary}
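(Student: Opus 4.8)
The plan is to reduce both cases to the preceding Proposition, so that it suffices to verify that each $\Sigma$-operation is uniformly equicontinuous, where---crucially---the modulus $\delta$ can be chosen independently of the index $i \in I$.

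First I would treat quantitative algebras. Here each $\sigma^{A_i}$ is non-expansive by definition, so $d(\sigma(\vec{a}), \sigma(\vec{b})) \le d(\vec{a}, \vec{b})$ holds in every $A_i$. Thus for any $\e \gt 0$ we may simply take $\delta = \e$: the same modulus works for all $i$ at once, which is exactly uniform equicontinuity in the sense required. Invoking the Proposition then yields the existence of the reduced product.

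Second I would treat normed vector spaces, whose signature consists of addition, negation, the constant $0$, and a unary scalar multiplication $\lambda \cdot (\blank)$ for each scalar $\lambda$. The key observation is that each of these operations is Lipschitz with a constant depending only on the operation and not on the particular space: addition satisfies $\norm{(a_1 + a_2) - (b_1 + b_2)} \le \norm{a_1 - b_1} + \norm{a_2 - b_2} \le 2\, d(\vec{a}, \vec{b})$, scalar multiplication by $\lambda$ satisfies $\norm{\lambda a - \lambda b} = \abs{\lambda}\, \norm{a - b}$, and negation is an isometry. Since these Lipschitz constants are forced by the norm axioms---which hold uniformly across the whole family---for each operation and each $\e \gt 0$ one selects $\delta$ (e.g.\ $\delta = \e / 2$ for addition and $\delta = \e / \abs{\lambda}$ for nonzero $\lambda$) that works simultaneously for all $i$. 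The Proposition then applies again.

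I expect the only genuine subtlety to be precisely this uniformity in $i$: the Proposition demands a single $\delta$ valid across the entire family, so the argument would collapse if the equicontinuity moduli degenerated as $i$ varied. Both cases avoid this because the relevant bounds---non-expansiveness in the quantitative case, and the fixed Lipschitz constants dictated by the norm axioms in the vector-space case---are structural, and hence index-independent. No delicate estimation is needed beyond these observations.
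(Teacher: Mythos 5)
Your proof is correct and is exactly the argument the paper intends: the corollary is stated as an immediate consequence of the preceding proposition on uniformly equicontinuous operations, with non-expansiveness supplying the modulus $\delta = \e$ for quantitative algebras and the index-independent Lipschitz constants forced by the norm axioms supplying it for normed vector spaces. Your explicit attention to the uniformity of $\delta$ across the family $i \in I$ is precisely the point that makes the proposition applicable.
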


As in the case of metric spaces in Corollary~\ref{compl-via-ultraprod-metsp},
we can construct the completion of a metric algebra via ultraproduct.

\begin{proposition}[\cite{Weaver1995}]\label{compl-via-ultraprod-malg}
  If a class $\cat{K}$ of metric algebras is closed under ultraproducts and subalgebras,
  then $\cat{K}$ is also closed under completions.
\end{proposition}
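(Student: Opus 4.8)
The plan is to mimic the construction of the completion of a metric space via a countable ultrapower given in Corollary~\ref{compl-via-ultraprod-metsp}, and then exploit the two closure hypotheses on $\cat{K}$. Fix a metric algebra $A \in \cat{K}$ and a free ultrafilter $\calU$ on $\bbN$. Since $\cat{K}$ is closed under ultraproducts, the ultrapower $\prod_n^{\calU} A$ exists (that is, the pseudometric of Definition~\ref{def:reduced-prod-malg} is congruential) and belongs to $\cat{K}$. By Proposition~\ref{thm:cble-uprod-comp} its underlying metric space is complete, since it is a countable ultraproduct indexed by $\bbN$ with respect to a free ultrafilter.

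Next I would examine the diagonal map $\iota \colon A \to \prod_n^{\calU} A$ sending $a$ to the class $[a]_n$ of the constant sequence. It is $\Sigma$-homomorphic because the operations on the ultrapower are computed pointwise, and it is isometric because the constant sequence gives $\theta([a]_n, [b]_n) = \limsup_{n \to \calU} d^A(a, b) = d^A(a, b)$; hence $\iota$ is an embedding of metric algebras. Thus $\iota(A)$ is a subalgebra of the complete metric algebra $\prod_n^{\calU} A$, isometric to $A$.

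The core step is to identify the completion of $A$ with the topological closure $\overline{\iota(A)}$ inside $\prod_n^{\calU} A$. As a metric space this closure is complete, being a closed subspace of a complete space, and it contains the dense isometric copy $\iota(A)$, so it realizes the metric completion of $A$. I would then check that $\overline{\iota(A)}$ is stable under every $\Sigma$-operation: an element of the closure is the limit of a Cauchy sequence $\iota(a_m)$, and applying $\sigma$ to such sequences yields $\iota(\sigma(\vec{a}_m))$, which again converges inside $\overline{\iota(A)}$ once the operations are seen to transport these limits. Equipping $\overline{\iota(A)}$ with the restricted operations then makes it a subalgebra of $\prod_n^{\calU} A$ and the completion of $A$ as a metric algebra.

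Finally, since $\cat{K}$ is closed under subalgebras and $\prod_n^{\calU} A \in \cat{K}$, the subalgebra $\overline{\iota(A)}$ lies in $\cat{K}$; being isometric to the completion of $A$, this establishes that $\cat{K}$ is closed under completions. The main obstacle is the core step: verifying that the closure of the subalgebra $\iota(A)$ is again closed under the operations, i.e.\ that the operations of $\prod_n^{\calU} A$ are sufficiently continuous on $\overline{\iota(A)}$ for limits of Cauchy sequences to be carried through each $\sigma$. This is precisely the point where one must invoke the analytic behaviour of the operations (their uniform continuity) rather than treating the algebra structure as purely combinatorial.
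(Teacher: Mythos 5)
Your construction is the same as the paper's: its proof of this proposition literally reads ``the same construction as Corollary~\ref{compl-via-ultraprod-metsp} works,'' i.e.\ embed $A$ diagonally into the countable ultrapower $\prod_n^{\calU} A$ (which exists and lies in $\cat{K}$ by hypothesis, and is complete by Proposition~\ref{thm:cble-uprod-comp}), and take the closure of $\iota(A)$ there. However, the step you yourself flag as ``the main obstacle'' is a genuine gap in the proposal as written: you close it by appealing to ``uniform continuity'' of the operations, but a metric algebra in this paper carries \emph{no} continuity assumption on its operations whatsoever (not even non-expansiveness), so uniform continuity is not available by definition, and without some form of continuity the topological closure of the subalgebra $\iota(A)$ need not be closed under the operations of $\prod_n^{\calU} A$.

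The missing idea is that the closure hypothesis itself supplies the continuity, because in Definition~\ref{def:reduced-prod-malg} ``closed under ultraproducts'' includes the requirement that ultraproducts of families from $\cat{K}$ \emph{exist}, i.e.\ that the pseudometric $\theta = \limsup_{i \to \calU} d$ is congruential. This forces every operation of every $B \in \cat{K}$ to be uniformly continuous: otherwise there are $\e \gt 0$ and tuples $\vec{a}_m, \vec{b}_m$ in $B$ with $d(\vec{a}_m, \vec{b}_m) \le 1/m$ and $d(\sigma(\vec{a}_m), \sigma(\vec{b}_m)) \gt \e$, and then in the ultrapower of $B$ by a free ultrafilter on $\bbN$ the classes $[\vec{a}_m]_m$ and $[\vec{b}_m]_m$ are at distance $0$ while $\theta([\sigma(\vec{a}_m)]_m, [\sigma(\vec{b}_m)]_m) \ge \e$, so $\theta$ fails to be congruential and this ultrapower does not exist, contradicting closure. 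Applying this with $B = \prod_n^{\calU} A \in \cat{K}$ shows that the operations of your ambient complete algebra are (uniformly) continuous; hence limits of Cauchy sequences pass through each $\sigma$, the closure $\overline{\iota(A)}$ is a subalgebra, and your final appeal to closure under subalgebras completes the argument. (The paper's own one-line proof, following Weaver, silently relies on the same fact.)
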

\begin{proof}
  The same construction as Corollary~\ref{compl-via-ultraprod-metsp} works.
  Note that the ultrapower of $A$ exists since we assume so.
\end{proof}

We can think, in some sense, that ultraproduct defines a ``topology''
on the class of metric algebras\footnote{If we appropriately restrict the size,
this construction gives rise to a topological space.},
as the Gromov-Hausdorff metric defines a metric on the set of compact metric spaces.

\begin{definition}
  \label{def:continuity}
  A class $\cat{K}$ of metric algebras is called \emph{continuous}
  if it is closed under taking ultraproducts.
\end{definition}

\begin{proposition}
  \label{ultraprod-topology}
  If $\{ \calC_\lambda \}_{\lambda \in \Lambda}$, $\calC_1$ and $\calC_2$ are continuous classes
  of metric algebras, then $\bigcap_{\lambda \in \Lambda} \calC_\lambda$
  and $\calC_1 \cup \calC_2$ are also continuous.
\end{proposition}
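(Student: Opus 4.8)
The plan is to handle the intersection and the union separately, in both cases keeping careful track of the \emph{existence} clause built into Definition~\ref{def:reduced-prod-malg}, since ultraproducts are only partially defined.

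For the intersection I would argue directly. Fix a nonempty set $I$, an ultrafilter $\calU$ on $I$, and a family $(A_i)_{i \in I}$ lying in $\bigcap_{\lambda} \calC_\lambda$; assume $\Lambda \neq \emptyset$ (the empty case being the degenerate one). Each $A_i$ then lies in $\calC_\lambda$ for every $\lambda$. Choosing any $\lambda_0 \in \Lambda$ and using that $\calC_{\lambda_0}$ is continuous, the ultraproduct $\prod_i^{\calU} A_i$ exists; and for each individual $\lambda$, continuity of $\calC_\lambda$ gives $\prod_i^{\calU} A_i \in \calC_\lambda$. Hence the ultraproduct lies in $\bigcap_\lambda \calC_\lambda$, which is therefore continuous.

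For the union the idea is to discard the coordinates outside a member of the ultrafilter. Given a family $(A_i)_{i \in I}$ in $\calC_1 \cup \calC_2$ and an ultrafilter $\calU$ on $I$, set $I_1 = \set{i \in I}{A_i \in \calC_1}$ and $I_2 = I \setminus I_1$, so that $A_i \in \calC_2$ for $i \in I_2$. By the defining property of an ultrafilter, either $I_1 \in \calU$ or $I_2 \in \calU$; by symmetry assume $I_1 \in \calU$. By the lemma on restrictions of filters, $\calU|_{I_1}$ is an ultrafilter on $I_1$, and continuity of $\calC_1$ yields that $\prod_{i \in I_1}^{\calU|_{I_1}} A_i$ exists and lies in $\calC_1$. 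The remaining task is to identify the full ultraproduct over $I$ with this restricted one. First I would record the auxiliary fact that for any $J \in \calU$ and any family $(a_i)$ of extended reals, $\limsup_{i \to \calU} a_i = \limsup_{i \to \calU|_{J}} a_i$, which follows from the definition of limit supremum using that $K \cap J \in \calU|_{J}$ for each $K \in \calU$. Applying this with $a_i = d^{A_i}(x_i, y_i)$ shows that the coordinate projection $p \colon \prod_{i \in I} A_i \to \prod_{i \in I_1} A_i$ is a surjective $\Sigma$-homomorphism with $\theta_I(x, y) = \theta_{I_1}(p(x), p(y))$, where $\theta_I, \theta_{I_1}$ denote the respective limsup pseudometrics.

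From this the two needed conclusions follow. Since $\theta_I(x,y) \le d(x,y)$ is clear and the relation $\theta_I(x,y) = 0$ is exactly the pullback along the homomorphism $p$ of the congruence $\theta_{I_1}(\blank, \blank) = 0$, it is itself a congruence; hence $\theta_I$ is congruential and $\prod_i^{\calU} A_i$ genuinely exists. Moreover $p$ descends to a surjective isometric homomorphism $\bar p \colon \prod_i^{\calU} A_i \to \prod_{i \in I_1}^{\calU|_{I_1}} A_i$, which is injective because it is isometric, so $\bar p$ is an isomorphism of metric algebras. Therefore $\prod_i^{\calU} A_i \in \calC_1 \subset \calC_1 \cup \calC_2$, establishing continuity of the union. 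I expect the main obstacle to be precisely this last paragraph: one must verify not only that the index restriction gives an \emph{isomorphism} of metric algebras (and not merely an isometry of underlying spaces), but also that congruentiality of $\theta_{I_1}$ pulls back along $p$, so that existence of the full ultraproduct is guaranteed rather than merely inferred from an isomorphism with something that exists.
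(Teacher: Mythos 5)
Your proposal is correct and takes essentially the same route as the paper: intersection handled directly, and for the union splitting $I$ by the ultrafilter property and identifying $\prod_i^{\calU} A_i$ with $\prod_{i}^{\calU|_{I_1}} A_i$. The only difference is that the paper asserts this isomorphism (and hence existence of the full ultraproduct) without proof, whereas you verify it — the limsup-restriction fact, the pullback of congruentiality along the coordinate projection, and the isometric surjection descending to an isomorphism — which is exactly the detail the paper's one-line claim glosses over.
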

\begin{proof}
  The continuity of $\bigcap_{\lambda \in \Lambda} \calC_\lambda$ is obvious.

  Let $(A_i)_{i \in I}$ be a family of metric algebras
  where $A_i \in \calC_1 \cup \calC_2$, and $\calU$ be an ultrafilter on $I$.
  Let us define $J_1$ and $J_2$ by $J_k = \set{i \in I}{A_i \in \calC_k}$.
  Since $J_1 \cup J_2 = I$ and $\calU$ is an ultrafilter,
  either $J_1 \in \calU$ or $J_2 \in \calU$ holds;
  we can assume $J_1 \in \calU$ without loss of generality.
  Then $\prod_i^{\calU} A_i \isom \prod_i^{\calU |_{J_1}} A_i \in \calC_1 \subset \calC_1 \cup \calC_2$.
\end{proof}

In the light of Proposition~\ref{ultraprod-topology},
it might be more natural to adopt the adjective \emph{closed} rather than \emph{continuous}.
However the use of \emph{closed} seems confusing
since we also use it for the closedness under algebraic operations,
therefore we prefer the word \emph{continuous}.
As we will see in Section~\ref{ch:logical-metric}, this terminology is consistent
with \emph{continuous quasivariety} defined in \cite{Khudyakov2003}.

\subsection{Closure Operator}
\label{sec:closure-op}

It is sometimes convenient to view a construction of metric algebras
as an operator on classes of metric algebras. See \cite{Gorbunov1998} for the classical case, and \cite{Mardare2017} for the quantitative case.

\begin{definition}[cf.\ \cite{Mardare2017}]
  We define the class operators $\bbH$, $\bbH_r$, $\bbS$, $\bbP$ and $\bbP_U$ as follows.
  \begin{align*}
    \bbH(\cat{K}) &= \set{A \in \cat{M}}{\text{$A$ is a quotient of some $B \in \cat{K}$}} \\
    \bbH_r(\cat{K}) &= \set{A \in \cat{M}}{\text{$A$ is a reflexive quotient of some $B \in \cat{K}$}} \\
    \bbS(\cat{K}) &= \set{A \in \cat{M}}{\text{$A$ is a subalgebra of some $B \in \cat{K}$}} \\
    \bbP(\cat{K}) &= \set{A \in \cat{M}}{\text{$A$ is the product of some $(B_i)_{i} \in \cat{K}$}} \\
    \bbP_U(\cat{K}) &= \set{A \in \cat{M}}{\text{$A$ is an ultraproduct of some $(B_i)_{i} \in \cat{K}$}}
  \end{align*}
  For class operators $\bbA$ and $\bbB$, we denote their composition by
  $\bbB \bbA (\cat{K}) = \bbB (\bbA (\cat{K}))$,
  and write $\bbA \subset \bbB$ when $\bbA(\cat{K}) \subset \bbB(\cat{K})$ holds for any class $\cat{K}$.
\end{definition}

\begin{proposition}[cf.\ \cite{Mardare2017}] \mbox{}
  \label{classop-comp}
  \begin{itemize}
    \item $\bbS \bbH \subset \bbH \bbS$ and $\bbS \bbH_r \subset \bbH_r \bbS$.
    \item $\bbP \bbH \subset \bbH \bbP$ and $\bbP \bbH_r \subset \bbH_r \bbP$.
    \item $\bbP_U \bbH \subset \bbH \bbP_U$ and $\bbP_U \bbH_r \subset \bbH_r \bbP_U$.
  \end{itemize}
\end{proposition}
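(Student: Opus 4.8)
The plan is to prove each of the six inclusions by transporting the quotient data through the subalgebra, product, or ultraproduct construction, and, for the reflexive versions, by additionally transporting the chosen section. In every case I write a member of $\bbS\bbH(\cat{K})$, $\bbP\bbH(\cat{K})$ or $\bbP_U\bbH(\cat{K})$ as being assembled from surjections $q \colon B \epi C$ (or $q_i \colon B_i \epi C_i$) with $B, B_i \in \cat{K}$, and my goal is to manufacture a surjection from the associated $\bbS$-, $\bbP$- or $\bbP_U$-object onto the given algebra.

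For $\bbS\bbH \subset \bbH\bbS$: given $A$ a subalgebra of $C$ and $q \colon B \epi C$ with $B \in \cat{K}$, I take the preimage $B' = q^{-1}(A)$, which is a subalgebra of $B$ under the induced metric, and note that $q$ restricts to a surjective homomorphism $q \restriction B' \colon B' \epi A$; surjectivity holds because $B' = q^{-1}(A)$ and $q$ is onto $C \supseteq A$, and non-expansiveness is inherited. This gives $A \in \bbH\bbS(\cat{K})$. For the reflexive refinement $\bbS\bbH_r \subset \bbH_r\bbS$ I restrict the section $s \colon C \to B$ to $A$; since $q(s(a)) = a \in A$ the restriction lands in $B'$, it is still an isometric embedding, and it splits $q \restriction B'$, so $q \restriction B'$ is reflexive.

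For $\bbP\bbH \subset \bbH\bbP$: given $A = \prod_i C_i$ with $q_i \colon B_i \epi C_i$, I form the coordinatewise map $q = \prod_i q_i \colon \prod_i B_i \to \prod_i C_i$. It is a surjective homomorphism, and it is non-expansive for the supremum metric since $d(q x, q y) = \sup_i d(q_i x_i, q_i y_i) \le \sup_i d(x_i, y_i) = d(x,y)$; hence $A \in \bbH\bbP(\cat{K})$. In the reflexive case the coordinatewise section $s = \prod_i s_i$ is again isometric, because the supremum metric turns a family of isometries into an isometry, and it splits $q$, so $q$ is reflexive and $\bbP\bbH_r \subset \bbH_r\bbP$ follows.

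For $\bbP_U\bbH \subset \bbH\bbP_U$ (and its reflexive form) the natural candidate, given $A = \prod_i^{\calU} C_i$ with $q_i \colon B_i \epi C_i$, is the ultraproduct $\prod_i^{\calU} B_i$ together with the induced map $\bar q([x_i]_i) = [q_i(x_i)]_i$. Once the domain exists, $\bar q$ is routinely a well-defined, surjective, non-expansive homomorphism (well-definedness and non-expansiveness both come from $\limsup_{i \to \calU} d(q_i x_i, q_i y_i) \le \limsup_{i \to \calU} d(x_i, y_i)$, and surjectivity from the coordinatewise surjectivity of the $q_i$), and the induced section treats the reflexive case as before. I expect the main obstacle to be precisely the \emph{existence} of $\prod_i^{\calU} B_i$: unlike the classical setting the ultraproduct is a partial operation (Definition~\ref{def:reduced-prod-malg}), and the ultraproduct pseudometric $\theta_B(x,y) = \limsup_{i \to \calU} d_{B_i}(x_i, y_i)$ on $\prod_i B_i$ need not be congruential even when the corresponding $\theta_C$ is, since the maps $q_i$ may contract distances that the operations of the $B_i$ spread apart. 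The crux is therefore to secure a genuinely existing ultraproduct of $\cat{K}$-members surjecting onto $A$; I would first try to deduce congruentiality of $\theta_B$ from that of $\theta_C$, and, where this fails, reduce within the partial-operation framework (for instance by exploiting how ultrafilters split the index set) to an ultraproduct that does exist.
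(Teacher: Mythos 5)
Your first two bullets are correct and coincide with the paper's own argument: the paper writes out only the inclusion $\bbS \bbH_r \subset \bbH_r \bbS$, using precisely your preimage construction $A' = q^{-1}(A)$ together with the restricted section, and dismisses everything else as ``almost analogous to the classical case''; your coordinatewise treatment of $\bbP\bbH \subset \bbH\bbP$ and $\bbP\bbH_r \subset \bbH_r\bbP$ is exactly that analogy, carried out correctly for the supremum metric.

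For the ultraproduct bullet, your diagnosis is sharper than the paper's: the paper gives no argument there at all, and the classical analogy it appeals to breaks at precisely the point you isolate, since the quotient maps $q_i$ only bound the $C_i$-distances from above and hence say nothing about congruentiality upstairs. Be aware, though, that neither of your fallback strategies can be completed in general, because the inclusion $\bbP_U\bbH \subset \bbH\bbP_U$ is actually false for arbitrary $\cat{K}$. Take $\Sigma$ to be a single unary symbol, $B = (\bbQ, \abs{x-y}, x \mapsto x^3)$ and $\cat{K} = \{B\}$. For every countably incomplete ultrafilter one can pick rationals with $\limsup_{i \to \calU} \abs{x_i - y_i} = 0$ yet $\limsup_{i \to \calU} \abs{x_i^3 - y_i^3} = \infty$, so no such ultrapower of $B$ exists; and a countably complete ultrapower of a countable algebra always exists and is isomorphic to that algebra. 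Hence every member of $\bbH\bbP_U(\cat{K})$ is countable. On the other hand, collapsing $F = \set{q \in \bbQ}{\abs{q} \ge 1}$ to a point via the congruential pseudometric $\theta(x, y) = \min \bigl( \abs{x - y},\, d(x, F) + d(y, F) \bigr)$ gives a quotient $C$ of $B$ whose induced operation is $3$-Lipschitz (since $\abs{x^3 - y^3} \le 3\abs{x - y}$ and $1 - \abs{x}^3 \le 3(1 - \abs{x})$ when $\abs{x}, \abs{y} \le 1$), so by the paper's uniform-continuity criterion all ultrapowers of $C$ exist; a free ultrapower of $C$ over $\bbN$ contains an isometric copy of a real interval and is therefore uncountable, so it lies in $\bbP_U\bbH(\cat{K}) \setminus \bbH\bbP_U(\cat{K})$. (A variant with a copy of $C$ placed at infinite distance inside $B$ defeats the reflexive inclusion as well.) What does hold --- and is all the paper ever uses, since Proposition~\ref{classop-comp} is only invoked for $\bbS$, $\bbP$ and $\bbH$ --- is your first argument under the extra hypothesis that $\cat{K}$ is closed under ultraproducts in the existence-requiring sense of Definition~\ref{def:reduced-prod-malg}: then $\prod_i^{\calU} B_i$ exists by assumption, your induced map $\bar{q}$ is a well-defined surjective homomorphism, and the induced section handles the reflexive case. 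So the unresolved existence issue is not a defect of your write-up; it is a genuine gap in the proposition as stated, which the paper's appeal to the classical case conceals.
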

\begin{proof}
  The proof is almost analogous to the classical case \cite{Gorbunov1998};
  we only give a proof for $\bbS \bbH_r \subset \bbH_r \bbS$.
  Let $\cat{K}$ be a class of metric algebras.
  Assume $A \in \cat{K}$ and $B' \in \bbS \bbH_r (\cat{K})$, that is,
  there exists a reflexive homomorphism $p \colon A \epi B$
  and $B' \subset B$ is a subalgebra.
  Let $s \colon B \mono A$ be a metric embedding such that $p \circ s = \id_B$
  and let us define $A' = \set{a \in A}{p(a) \in B'}$.
  Then $A'$ is a subalgebra of $A$ and $s$ restricts to a map $s |_{B'} \colon B' \to A'$.
  Therefore $B'$ is a reflexive quotient of $A'$, thus $B' \in \bbH_r \bbS (\cat{K})$.
\end{proof}

\section{Congruence Lattice on Metric Algebras}
\label{ch:congruence}

Congruence not only gives a concrete description of quotient
but is a fundamental tool in universal algebra.
We can characterize various constructions of $\Sigma$-algebra by congruence,
and use the congruence theory in the proof of the variety theorem.

As we saw in the previous section, the notion of congruence is generalized
to congruential pseudometric in the theory of metric algebra. In this section,
we give the metric counterpart of the congruence theory in classical universal algebra.

\subsection{Isomorphism Theorem}
\label{sec:isothm}

We showed the metric version of the first isomorphism theorem in Proposition~\ref{first-isom-thm}.
In this section we prove the rest of the isomorphism theorems.

\begin{definition}
  Let $A$ be a metric algebra and $\theta$ be a congruential pseudometric on $A$.
  \begin{itemize}
    \item For a subalgebra $B$ of $A$, \emph{the restriction of $\theta$ to $B$}
      is defined by the usual restriction of pseudometric, which we denote by $\theta_B$.
    \item For a subset $S \subset A$,
      we define
      $S^\theta = \set{a \in A}{\exists s \in S, \, d(s, a) = 0}$.
  \end{itemize}
\end{definition}

\begin{theorem}
  In the situation above, if $S$ is a subalgebra of $A$, so is $S^{\theta}$.
\end{theorem}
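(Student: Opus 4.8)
The plan is to verify directly that $S^\theta$ is closed under every $\Sigma$-operation, which is exactly what it means to be a subalgebra of $A$. The one nontrivial input, on which the whole argument rests, is that $\theta$ is congruential: by Definition~\ref{def:cong} the relation $\sim_\theta$ given by $x \sim_\theta y \iff \theta(x, y) = 0$ is a congruence of the underlying $\Sigma$-algebra. Observe that $S^\theta$ is precisely the $\sim_\theta$-saturation of $S$, i.e.\ the union of all $\sim_\theta$-classes that meet $S$, so the statement is the metric reformulation of the classical fact that the saturation of a subalgebra by a congruence is again a subalgebra.

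First I would fix $\sigma \in \Sigma_n$ together with elements $a_1, \ldots, a_n \in S^\theta$, the goal being $\sigma(a_1, \ldots, a_n) \in S^\theta$. By the definition of $S^\theta$, for each $k$ I may choose $s_k \in S$ with $\theta(s_k, a_k) = 0$, that is, $s_k \sim_\theta a_k$.

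Next I would invoke the congruence property: since $\sim_\theta$ is a $\Sigma$-congruence and $s_k \sim_\theta a_k$ holds for every $k$, compatibility with $\sigma$ yields $\sigma(s_1, \ldots, s_n) \sim_\theta \sigma(a_1, \ldots, a_n)$. Because $S$ is a subalgebra it is closed under $\sigma$, so $\sigma(s_1, \ldots, s_n) \in S$. Hence $\sigma(a_1, \ldots, a_n)$ is $\sim_\theta$-related to an element of $S$, which by definition places it in $S^\theta$. As $\sigma$ and the $a_k$ were arbitrary, $S^\theta$ is closed under all operations and is therefore a subalgebra.

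I do not expect a genuine obstacle here; the metric refinement (the actual pseudometric values of $\theta$) plays no role beyond cutting out the equivalence relation $\sim_\theta$, and the non-expansiveness of operations is irrelevant to the closure claim. The only point deserving care is to apply the hypothesis correctly: it is the kernel relation $\theta(\cdot,\cdot) = 0$---not the pseudometric $\theta$ as a whole---that carries the algebraic compatibility supplied by Definition~\ref{def:cong}, and this is exactly what upgrades the componentwise relations $s_k \sim_\theta a_k$ to the single relation on the $\sigma$-images.
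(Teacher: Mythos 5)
Your proof is correct and follows essentially the same argument as the paper: pick $\sim_\theta$-witnesses in $S$ for the given elements of $S^\theta$, use the congruence property of $\sim_\theta$ (guaranteed by Definition~\ref{def:cong}) to transfer the relation through $\sigma$, and conclude from closure of $S$. Your added remark that $S^\theta$ is the $\sim_\theta$-saturation of $S$, and that only the kernel relation $\theta(\cdot,\cdot)=0$ matters, is a faithful reading of what the paper's proof implicitly uses.
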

\begin{proof}
  Let $\sigma \in \Sigma$ be an $n$-ary operation in $\Sigma$.
  Suppose $a_1, \ldots, a_n \in S^{\theta}$. By the definition of $S^{\theta}$,
  there exists $s_1, \ldots, s_n \in S$ such that
  $s_i \sim_{\theta} a_i$ for $i = 1, \ldots, n$.
  Since the relation $\sim_{\theta}$ is preserved by $\sigma^A$, we also have
  $\sigma(s_1, \ldots, s_n) \sim_{\theta} \sigma(a_1, \ldots, a_n)$.
  Since $S$ is a subalgebra, we have $\sigma(s_1, \ldots, s_n) \in S$
  and then we conclude $\sigma(a_1, \ldots, a_n) \in S^{\theta}$.
\end{proof}

\begin{theorem}[Second and Third Isomorphism Theorem] \mbox{}
  \begin{enumerate}
    \item Given a metric algebra $A$, a subalgebra $B$ of $A$ and
      a congruence $\theta$ on $A$, we have a canonical isomorphism
      $B^\theta \quot \theta_{B^\theta} \isom B \quot \theta_B$.
    \item Given a metric algebra $A$ and congruences $\rho$, $\theta$ on $A$ with $\rho \cle \theta$,
      we have a canonical isomorphism $A \quot \rho \isom (A \quot \theta) \quot (\rho \quot \theta)$.
  \end{enumerate}
\end{theorem}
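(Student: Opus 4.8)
The plan is to derive both isomorphisms from the first isomorphism theorem (Proposition~\ref{first-isom-thm}), following the classical pattern; the only metric-specific work is to track the pseudometric through each identification.

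For the second isomorphism theorem I would consider the composite homomorphism
\[
  f \colon B \mono B^{\theta} \epi B^{\theta} \quot \theta_{B^{\theta}},
\]
where the first map is the inclusion of $B$ into its $\theta$-saturation $B^{\theta}$ (a subalgebra, by the preceding theorem) and the second is the canonical projection. The point is that $f$ is surjective: any $a \in B^{\theta}$ satisfies $a \sim_{\theta} s$ for some $s \in B$, so $f(s) = [s] = [a]$. I would then compute, for $b, b' \in B$,
\[
  \ker(f)(b, b') = \theta_{B^{\theta}}(b, b') = \theta(b, b') = \theta_{B}(b, b'),
\]
so that $\ker(f) = \theta_{B}$. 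Applying Proposition~\ref{first-isom-thm} with $\theta_{B} = \ker(f)$ gives the isometric isomorphism $B \quot \theta_{B} \isom \im(f) = B^{\theta} \quot \theta_{B^{\theta}}$.

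The third isomorphism theorem has an identical shape. First I would confirm that $\rho \quot \theta$ is a well-defined congruential pseudometric on $A \quot \theta$, determined by $(\rho \quot \theta)([a], [b]) = \rho(a, b)$: the ordering hypothesis relating $\rho$ and $\theta$ is precisely what forces $\theta$-equivalent elements to be $\rho$-equivalent, whence (by the triangle inequality) $\rho(a,b)$ is independent of the chosen representatives, and domination by the quotient metric on $A \quot \theta$ is automatic. I would then take the composite of canonical projections
\[
  g \colon A \epi A \quot \theta \epi (A \quot \theta) \quot (\rho \quot \theta),
\]
which is surjective, and note that $\ker(g)(a, b) = (\rho \quot \theta)([a], [b]) = \rho(a, b)$, i.e.\ $\ker(g) = \rho$. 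Proposition~\ref{first-isom-thm} then yields $A \quot \rho \isom (A \quot \theta) \quot (\rho \quot \theta)$.

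All remaining verifications---that $B^{\theta}$ is a subalgebra, that $\theta_{B}$, $\theta_{B^{\theta}}$ and $\rho \quot \theta$ are congruential pseudometrics, and that the displayed arrows are genuine homomorphisms---are routine consequences of the definitions and the preceding theorem. The only step requiring real attention, and thus the main (though mild) obstacle, is the well-definedness of $\rho \quot \theta$: one must check that the ordering between $\rho$ and $\theta$ really does make $\rho$ constant on $\theta$-classes, since otherwise the defining formula is meaningless. Once that is in place, both statements fall out immediately, with the isometry supplied for free by the ``$\theta = \ker(f)$'' clause of Proposition~\ref{first-isom-thm}.
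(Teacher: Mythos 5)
Your proposal is correct and follows essentially the same route as the paper: part (1) is the first isomorphism theorem (Proposition~\ref{first-isom-thm}) applied to the inclusion $B \into B^\theta$ composed with the projection --- the paper phrases this as the inclusion ``inducing an embedding'' $\bar{f} \colon B \quot \theta_B \to B^\theta \quot \theta_{B^\theta}$ and then checks surjectivity exactly as you do --- and part (2) is the same kernel computation for the composite of projections, which the paper compresses into ``as (1)''. One caveat on (2), which your well-definedness check exposes: under the paper's \emph{reversed} pointwise order, the stated hypothesis $\rho \cle \theta$ literally means $\rho(a,b) \ge \theta(a,b)$, hence $\sim_\rho \subset \sim_\theta$ --- the opposite of what you correctly identify as necessary for $(\rho \quot \theta)([a],[b]) = \rho(a,b)$ to be well defined (and for it to be dominated by $\overline{\theta}$). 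The hypothesis should be $\theta \cle \rho$, i.e.\ $\rho \le \theta$ pointwise, which is the convention used in Lemma~\ref{lem:quot-perm}; the same sign slip occurs in the paper's lattice-isomorphism proposition, whose statement writes $\rho \ge \theta$ while its proof invokes $\rho \le \theta$. So your assertion that ``the ordering hypothesis forces $\theta$-equivalent elements to be $\rho$-equivalent'' is false for the hypothesis as literally printed, but your argument is exactly the right proof of the correctly oriented statement, and the orientation issue is the paper's typo rather than a gap in your reasoning.
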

\begin{proof}
  (1) Let $f \colon B \to B^\theta$ be an inclusion map.
    Since $\theta_B$ is a restriction of $\theta_{B^\theta}$,
    we have $\theta_B(x, y) = \theta_{B^\theta}(x, y)$ for each $x, y \in B$.
    Then $f$ induces an embedding $\bar{f} \colon B \quot \theta_B \to B^\theta \quot \theta_{B^\theta}$.
    It is surjective; for $x \in B^\theta$, there exists $y \in B$ such that
    $\theta_{B^\theta}(x, y) = \theta(x, y) = 0$ by the definition of $B^\theta$, therefore
    $\bar{f}([y]) = [x]$.

  (2) Let $\pi \colon A \to A \quot \theta$ be the natural projection.
  It is easy to see that $\pi$ induces an isomorphism
  $\bar{\pi} \colon A \quot \rho \isom (A \quot \theta) \quot (\rho \quot \theta)$
  as (1).
\end{proof}

\subsection{Congruence Lattice}
\label{sec:cong-lat}

In classical universal algebra, it is sometimes convenient to consider
the poset of congruences rather than a congruence (see e.g.\ \cite{Burris1981}).

In this section, we show that the poset of congruential pseudometrics is a complete lattice
as in the classical case. Thus we can take arbitrary join and meet
of congruential pseudometrics.

\begin{definition}
  Let $\cat{K}$ be a class of metric algebras.
  A congruence $\theta$ on $A$ is \emph{$\cat{K}$-congruential}
  if $A \quot \theta$ belongs to $\cat{K}$.
  We denote by $\Con(A)$ the set of congruences on $A$,
  and by $\Con_{\cat{K}}(A)$ the set of $\cat{K}$-congruential pseudometrics on $A$.
\end{definition}

\begin{definition}[\cite{Weaver1995}]
  \label{def:subdir-prod}
  Let $(A_i)_{i \in I}$ be a family of metric algebras.

  A \emph{subdirect product of $(A_i)_{i \in I}$} is a subalgebra $A$
  of the product $\prod_{i \in I} A_i$ where each projection map $\pi_i \colon A \to A_i$ is surjective.

  A homomorphism $f \colon A \to \prod_{i \in I} A_i$ between metric algebras is a \emph{subdirect embedding}
  if $f$ is an embedding and $f(A)$ is a subdirect product of $(A_i)_{i \in I}$,
  that is, each component $f_i \colon A \to A_i$ is surjective.
\end{definition}

\begin{lemma}
  Let $A = (A, d)$ be a metric algebra, $(\theta_i)_{i \in I}$ be a family of congruences on $A$
  and $f \colon A \to \prod_{i \in I} A \quot \theta_i$ be the product of their projections.
  Then its kernel
  is presented by  $\ker(f)(a, b) = \sup_{i \in I} \theta_i (a, b)$ for $a, b \in A$.
  Moreover, the induced map $\bar{f} \colon A \quot \ker(f) \to \prod_{i \in I} A \quot \theta_i$
  is a subdirect embedding.
\end{lemma}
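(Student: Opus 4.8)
The plan is to read off the kernel directly from the definitions and then appeal to the first isomorphism theorem. First I would compute $\ker(f)$. By construction $f(a) = ([a]_{\theta_i})_{i \in I}$, so its $i$-th component is the canonical projection $\pi_i \colon A \to A \quot \theta_i$. Since the product carries the supremum metric, the definition of kernel gives $\ker(f)(a, b) = d(f(a), f(b)) = \sup_{i \in I} d^{A \quot \theta_i}([a]_{\theta_i}, [b]_{\theta_i})$. By Proposition~\ref{thm:metric-id}, the metric on $A \quot \theta_i$ satisfies $d^{A \quot \theta_i}([a], [b]) = \theta_i(a, b)$, and therefore $\ker(f)(a, b) = \sup_{i \in I} \theta_i(a, b)$, which is the claimed formula. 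Implicitly this also shows that the pointwise supremum of the $\theta_i$ is again a congruential pseudometric, since $\ker(f)$ always is one by definition.

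For the second claim, I would apply the first isomorphism theorem (Proposition~\ref{first-isom-thm}) in the case $\theta = \ker(f)$. This immediately yields that the induced map $\bar{f} \colon A \quot \ker(f) \to \prod_{i} A \quot \theta_i$ is an isometry onto its image, i.e.\ an embedding. It then remains to verify the subdirect condition, namely that each component $\bar{f}_i \colon A \quot \ker(f) \to A \quot \theta_i$ is surjective. Writing $q \colon A \to A \quot \ker(f)$ for the natural projection, we have $\bar{f}_i \circ q = \pi_i$ by construction, and since $\pi_i$ is surjective, so is $\bar{f}_i$. Hence $\bar{f}$ is a subdirect embedding.

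I do not expect any serious obstacle: the statement unwinds to the definition of the kernel combined with the first isomorphism theorem. The only points that require a moment's care are the identification $d^{A \quot \theta_i}([a], [b]) = \theta_i(a, b)$ coming from the metric identification, and the fact that it is the \emph{supremum} metric on the product (rather than some other product metric) that makes the kernel equal the pointwise supremum $\sup_{i} \theta_i$.
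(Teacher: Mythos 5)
Your proposal is correct and follows essentially the same route as the paper: compute $\ker(f)$ via the supremum metric and the identification $d^{A \quot \theta_i}([a],[b]) = \theta_i(a,b)$, then invoke Proposition~\ref{first-isom-thm} with $\theta = \ker(f)$. The only difference is that you spell out the surjectivity of each component $\bar{f}_i$ explicitly, which the paper leaves implicit in its appeal to the first isomorphism theorem.
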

\begin{proof}
  Since $\prod_{i \in I} A \quot \theta_i$ is endowed with the supremum metric,
  then we have $\ker(f)(a, b) := d(f(a), f(b)) = \sup_{i \in I} \theta_i(a, b)$.
  The rest of the theorem follows from Proposition~\ref{first-isom-thm}.
\end{proof}

\begin{corollary}\label{cor:cong-complat}
  If $(\theta_i)_{i \in I}$ is a family of congruences on $A$,
  then $\theta(a, b) = \sup_{i \in I} \theta_i (a, b)$ is also a congruence on $A$.
  If $\cat{K}$ is closed under subdirect products
  and each $\theta_i$ is $\cat{K}$-congruential,
  then $\theta$ is also $\cat{K}$-congruential.
\end{corollary}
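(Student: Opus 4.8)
The plan is to read both claims directly off the preceding Lemma, which already does the substantive work. Let $f \colon A \to \prod_{i \in I} A \quot \theta_i$ be the product of the canonical projections $\pi_i \colon A \to A \quot \theta_i$. Each $\pi_i$ is a homomorphism of metric algebras, and the product $\prod_{i \in I} A \quot \theta_i$ exists (products of metric algebras always do, carrying the supremum metric), so $f$ is a well-defined homomorphism.

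For the first claim, I would use the Lemma to identify $\theta$ with $\ker(f)$: by the supremum metric on the product, the Lemma computes $\ker(f)(a, b) = \sup_{i \in I} \theta_i(a, b) = \theta(a, b)$. Since the kernel of any homomorphism between metric algebras is a congruential pseudometric by definition---non-expansiveness of $f$ gives $\ker(f)(x, y) \le d^A(x, y)$, and the relation $\ker(f)(x, y) = 0$ is a congruence because $f$ is $\Sigma$-homomorphic---it follows at once that $\theta$ is a congruence on $A$.

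For the second claim, suppose $\cat{K}$ is closed under subdirect products and each $\theta_i$ is $\cat{K}$-congruential, so that $A \quot \theta_i \in \cat{K}$ for every $i$. The Lemma supplies a subdirect embedding $\bar{f} \colon A \quot \ker(f) \to \prod_{i \in I} A \quot \theta_i$; since $\theta = \ker(f)$, its image $\bar{f}(A \quot \theta)$ is a subdirect product of the family $(A \quot \theta_i)_{i \in I}$, every factor of which lies in $\cat{K}$. Closure of $\cat{K}$ under subdirect products then places this image in $\cat{K}$, and the standing assumption that $\cat{K}$ is closed under isomorphisms transports membership back along the isomorphism $A \quot \theta \isom \bar{f}(A \quot \theta)$ induced by the embedding $\bar{f}$, yielding $A \quot \theta \in \cat{K}$; that is, $\theta$ is $\cat{K}$-congruential.

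There is essentially no obstacle here beyond bookkeeping, since the genuine content---the kernel formula and the fact that $\bar{f}$ is an isometric subdirect embedding---is exactly what the preceding Lemma provides. The only subtle point worth flagging is the final step of the second claim: closure under subdirect products yields membership of the image $\bar{f}(A \quot \theta)$ rather than of $A \quot \theta$ itself, so it is the standing closure under isomorphisms that actually closes the argument.
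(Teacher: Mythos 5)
Your proposal is correct and follows exactly the route the paper intends: the corollary is stated without a separate proof precisely because both claims are read off the preceding Lemma, with the first claim following from $\theta = \ker(f)$ being a congruential pseudometric and the second from the subdirect embedding $\bar{f}$ together with closure under subdirect products and isomorphisms. Your write-up simply makes explicit the bookkeeping (well-definedness of $f$, the kernel being congruential, and the transport along the isomorphism onto $\bar{f}(A \quot \theta)$) that the paper leaves implicit.
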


Therefore $\Con(A)$ is a complete lattice,
and if $\cat{K}$ is closed under subdirect products, $\Con_{\cat{K}}(A)$
is also a complete lattice.
We denote the meet and join of $(\theta_i)_{i \in I}$ in $\Con(A)$
by $\bigcurlywedge_{i \in I} \theta_i$ and $\bigcjoin_{i \in I} \theta_i$ respectively.
Recall that we adopt the reversed pointwise order for congruences,
so Corollary~\ref{cor:cong-complat} means
that the meet of congruences in $\Con(A)$ and $\Con_{\cat{K}}(A)$ is their pointwise supremum.

In general, it is difficult to give a concrete description of the join of congruences,
but it can be done for some cases.
For example, the assumption of the following theorem is satisfied
if $A \quot \theta_i$ is a quantitative algebra, or if $A \quot \theta_i$ is a normed vector space.

\begin{theorem}\label{thm:cong-join}
  Let $(\theta_i)_{i \in I}$ be congruences on $A$, and assume the following condition:
  for each $\sigma \in \Sigma_n$, there exists a positive real number $K_{\sigma}$ such that
  for any $i \in I$ and $\vec{a}, \vec{b} \in A^n$
  we have $\theta_i(\sigma(\vec{a}), \sigma(\vec{b})) \le K_{\sigma} \theta_i(\vec{a}, \vec{b})$.
  Then we have:
  \[
    \bigl( \bigcjoin_{i \in I} \theta_i \bigr) (a, b)
    = \inf_{\substack{n \ge 0, \, c_1, \ldots, c_n \in A \\ i_0, i_1, \ldots, i_n \in I}}
      \bigl( \theta_{i_0}(a, c_1) + \theta_{i_1}(c_1, c_2) + \cdots + \theta_{i_n}(c_n, b) \bigr)
      \quad .
  \]
  Moreover
  $(\bigcjoin_{i \in I} \theta_i)(\sigma(\vec{a}), \sigma(\vec{b})) \le K_{\sigma} (\bigcjoin_{i \in I} \theta_i)(\vec{a}, \vec{b})$
  holds for $\vec{a}, \vec{b} \in A^n$.
\end{theorem}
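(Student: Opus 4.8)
Throughout write $\rho$ for the function defined by the right-hand side of the displayed formula; the plan is to show that $\rho$ is a congruential pseudometric, that it coincides with the join $\bigcjoin_{i \in I} \theta_i$, and that it satisfies the stated Lipschitz estimate. The formula itself is the expected metric analogue of the classical ``transitive-closure'' description of the join of congruences, so the first task is purely order-theoretic and the real content sits in the \emph{moreover} clause.

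First I would check that $\rho$ is a pseudometric. The trivial chain ($n = 0$, with $i_0$ arbitrary) gives $\rho(a,a) \le \theta_{i_0}(a,a) = 0$; symmetry follows by reversing a chain from $a$ to $b$ and invoking the symmetry of each $\theta_i$; and the triangle inequality follows by concatenating a chain from $a$ to $b$ with one from $b$ to $c$, since the concatenated sum is again of the admissible form, so the infimum over chains from $a$ to $c$ is bounded by $\rho(a,b) + \rho(b,c)$. The one-step chain $a, b$ with index $j$ shows $\rho(a,b) \le \theta_j(a,b)$ for every $j \in I$, whence $\rho \le d^A$ (so $\rho$ is a legitimate candidate congruential pseudometric) and $\theta_j \cle \rho$, i.e.\ $\rho$ is a $\cle$-upper bound of the family.

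Next I would verify that $\rho$ is the \emph{least} upper bound. Let $\mu$ be any congruence with $\theta_i \cle \mu$ for all $i$, which by definition of $\cle$ means $\mu(a,b) \le \theta_i(a,b)$ pointwise. For any admissible chain $a, c_1, \dots, c_n, b$ with indices $i_0, \dots, i_n$, the triangle inequality for $\mu$ together with $\mu \le \theta_{i_k}$ gives $\mu(a,b) \le \theta_{i_0}(a, c_1) + \cdots + \theta_{i_n}(c_n, b)$; taking the infimum over all chains yields $\mu(a,b) \le \rho(a,b)$, that is $\rho \cle \mu$. Hence, once $\rho$ is known to be a congruence, it is forced to be $\bigcjoin_{i} \theta_i$, and the explicit formula is established.

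It remains to prove the Lipschitz estimate, and this is the step I expect to be the main obstacle. The plan is to fix $\vec a, \vec b \in A^n$ and $\e > 0$, choose for each coordinate $k$ a chain realizing $\rho(a_k, b_k)$ within $\e$, and assemble these into a single chain from $\sigma(\vec a)$ to $\sigma(\vec b)$ by pushing the coordinates through their chains and applying $\sigma$ to each intermediate tuple, estimating every resulting step by the hypothesis $\theta_i(\sigma(\vec c), \sigma(\vec c')) \le K_\sigma \theta_i(\vec c, \vec c')$. The delicate point is the bookkeeping: the coordinate chains have different lengths and use different indices, so a naive ``one coordinate at a time'' assembly only yields a bound in terms of $\sum_k \rho(a_k, b_k)$, whereas the claim requires $K_\sigma \max_k \rho(a_k, b_k)$; arranging a single index per step of the tuple chain, so that the hypothesis applies with the supremum metric $\theta_i(\vec c, \vec c') = \max_k \theta_i(c_k, c'_k)$ on $A^n$ and the step costs add up to the maximum rather than the sum, is exactly where the argument must be made with care. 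Once the estimate $\rho(\sigma(\vec a), \sigma(\vec b)) \le K_\sigma \max_k \rho(a_k, b_k)$ is secured, it immediately gives that $\rho(a_k, b_k) = 0$ for all $k$ forces $\rho(\sigma(\vec a), \sigma(\vec b)) = 0$, so the relation $\sim_{\rho}$ is preserved by every operation; thus $\rho$ is a congruential pseudometric and, by the previous paragraph, equals $\bigcjoin_{i} \theta_i$, which is precisely the \emph{moreover} assertion.
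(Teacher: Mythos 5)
Your first two paragraphs are correct and coincide with the paper's own argument: one-step chains show that $\rho$ is a pseudometric lying pointwise below every $\theta_i$ (hence below $d^A$, and a $\cle$-upper bound of the family), and concatenation plus the triangle inequality shows that any congruence $\mu$ with $\theta_i \cle \mu$ for all $i$ satisfies $\mu \le \rho$ pointwise; so $\rho$ is the join as soon as it is known to be congruential. The gap is in your third paragraph: the estimate $\rho(\sigma(\vec a),\sigma(\vec b)) \le K_\sigma \max_k \rho(a_k,b_k)$, which you correctly single out as the crux, is never actually proved --- you describe the bookkeeping difficulty and then continue ``once the estimate is secured.'' Since you route the congruentiality of $\rho$ (and hence the displayed formula) through this unproved estimate, your proposal as written establishes neither the formula nor the \emph{moreover} clause. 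Note that congruentiality needs much less than the max-bound: it needs only preservation of the zero-relation, and for that the ``naive'' bound you dismiss, namely $\rho(\sigma(\vec a),\sigma(\vec b)) \le K_\sigma \sum_k \rho(a_k,b_k)$, is entirely sufficient; it follows by changing one coordinate at a time and lifting the chain of the $k$-th coordinate through $\sigma$, since for tuples differing in a single coordinate the hypothesis reads $\theta_i(\sigma(\ldots a_k \ldots),\sigma(\ldots b_k \ldots)) \le K_\sigma\, \theta_i(a_k,b_k)$. That repair recovers the displayed formula, and it is essentially what the paper does: its proof treats only $\abs{\sigma}=1$, where sum and max coincide, and declares the other arities ``very similar.''

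The deeper point is that your suspicion was justified: under the paper's convention $\theta_i(\vec a,\vec b)=\max_k \theta_i(a_k,b_k)$, the \emph{moreover} clause is false for $n\ge 2$, so no amount of careful index-scheduling can close your gap. Counterexample: let $\sigma$ be binary, $B=\{s,m,t,s',m',t'\}$, let $\theta_1$ be the shortest-path metric on $B$ whose only cheap edges are $\{s,m\}$ and $\{m',t'\}$ of length $1/2$ (all other nonzero distances $3/2$), and $\theta_2$ the one with cheap edges $\{m,t\}$ and $\{s',m'\}$; on $A=B\times B$ with operation $\sigma(x,y)=(x_1,y_2)$, metric $d=\max(\Theta_1,\Theta_2)$ and congruences $\Theta_i\bigl((x_1,x_2),(y_1,y_2)\bigr)=\max\bigl(\theta_i(x_1,y_1),\theta_i(x_2,y_2)\bigr)$, the hypothesis holds with $K_\sigma=1$. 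The join $\rho$ satisfies $\rho\bigl((s,s'),(t,s')\bigr)=\rho\bigl((s,s'),(s,t')\bigr)=1$, via the chains $s\to m\to t$ (a $\theta_1$-step then a $\theta_2$-step) and $s'\to m'\to t'$ (a $\theta_2$-step then a $\theta_1$-step). But $\rho\bigl((s,s'),(t,t')\bigr)=3/2$: the two coordinates need their $1/2$-moves in opposite index orders, so at most one of the four moves can be merged with another into a common step, forcing at least three steps of cost $1/2$ (and any chain using a $3/2$-distance is no cheaper). With $\vec a=\bigl((s,s'),(s,s')\bigr)$ and $\vec b=\bigl((t,s'),(s,t')\bigr)$ we get $\rho(\sigma(\vec a),\sigma(\vec b))=3/2 > 1 = K_\sigma\max_k\rho(a_k,b_k)$. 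The statement that is actually true is the sum bound above (equivalently, the max bound with constant $nK_\sigma$). So what you flagged as the delicate point is a genuine defect of the theorem's \emph{moreover} clause --- and of its corollary that $\cat{Q}$-congruential pseudometrics are closed under joins, which the same example refutes --- not a defect of your attempt that more care could have fixed; the paper's proof misses it precisely because it only checks the unary case.
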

\begin{proof}
  Let $\theta = \bigcjoin_{i \in I} \theta_i$ and $\rho(a, b)$ be the right hand side.

  ($\le$) Since $\theta_i \cle \theta$ for all $i \in I$, we have:
  \begin{align*}
    \theta_{i_0}(a, c_1) + \theta_{i_1}(c_1, c_2) &+ \cdots + \theta_{i_n}(c_n, b) \\
    &\ge \theta(a, c_1) + \theta(c_1, c_2) + \cdots + \theta(c_n, b) \\
    &\ge \theta(a, b) \quad.
  \end{align*}
  Taking the infimum, we have
  $\theta(a, b) \le \rho(a, b)$

  ($\ge$) Since $\rho(a, b) \le \theta_i(a, b)$ for each $i \in I$ and $a, b \in A$,
  it is sufficient to show that $\rho$ is congruential.
  It is easy to see that $\rho$ is a pseudometric,
  so we only have to show that each $\sigma \in \Sigma$ preserves the metric identification $\rho(x, y) = 0$.
  We only prove the case $\abs{\sigma} = 1$ for the simplicity;
  the other cases are very similar.

  Suppose $\rho(a, b) = 0$, that is, for $\e \gt 0$, there exists
  $c_1, \ldots, c_n \in A$ and $i_0, \ldots, i_n \in I$ such that
  $\theta_{i_0}(a, c_1) + \cdots + \theta_{i_n}(c_n, b) \le \e$.
  Since $\theta_i(\sigma(x), \sigma(y)) \le K_{\sigma} \theta_i(x, y)$, we also have
  $\rho(\sigma(a), \sigma(b)) \le \theta_{i_0}(f(a), f(c_1)) + \cdots + \theta_{i_n}(f(c_n), f(b)) \le K_{\sigma} \e$.
  Therefore $\rho(\sigma(a), \sigma(b)) = 0$ by letting $\e \to 0$, which completes the proof.
\end{proof}

\begin{corollary}
  If $\theta_i$ is $\cat{Q}$-congruential for $i \in I$,
  then $\bigcjoin_{i} \theta_i$ is $\cat{Q}$-congruential.
\end{corollary}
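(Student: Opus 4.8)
The plan is to recognize this as the special case $K_\sigma = 1$ of Theorem~\ref{thm:cong-join}. First I would unpack what it means for $\theta_i$ to be $\cat{Q}$-congruential: by definition $A \quot \theta_i$ is a quantitative algebra, so each operation $\sigma$ on $A \quot \theta_i$ is non-expansive with respect to the supremum metric. Since the metric on $A \quot \theta_i$ is given by $d([x],[y]) = \theta_i(x,y)$ and the quotient operation is $\sigma([a_1],\ldots,[a_n]) = [\sigma(a_1,\ldots,a_n)]$, non-expansiveness translates directly into the inequality
\[
  \theta_i(\sigma(\vec{a}), \sigma(\vec{b})) \le \max_k \theta_i(a_k, b_k) = \theta_i(\vec{a}, \vec{b})
\]
for all $i \in I$ and $\vec{a}, \vec{b} \in A^n$.

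This is exactly the hypothesis of Theorem~\ref{thm:cong-join} with the choice $K_\sigma = 1$ for every $\sigma \in \Sigma$. Applying that theorem, I obtain both the explicit formula for $\bigcjoin_i \theta_i$ and, more importantly here, the conclusion
\[
  \bigl(\bigcjoin_i \theta_i\bigr)(\sigma(\vec{a}), \sigma(\vec{b})) \le \bigl(\bigcjoin_i \theta_i\bigr)(\vec{a}, \vec{b}) .
\]
Reading this back through the same dictionary, it says precisely that each operation $\sigma$ on $A \quot (\bigcjoin_i \theta_i)$ is non-expansive, i.e.\ that $A \quot (\bigcjoin_i \theta_i)$ is a quantitative algebra, which is the definition of $\bigcjoin_i \theta_i$ being $\cat{Q}$-congruential.

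There is essentially no obstacle: the entire content is the verification that quantitativeness of the quotients supplies the uniform Lipschitz constant $K_\sigma = 1$ required by Theorem~\ref{thm:cong-join}. The only point demanding a little care is to confirm that the supremum metric used in the definition of quantitative algebra is the same product metric implicit in the shorthand $\theta_i(\vec{a},\vec{b}) = \max_k \theta_i(a_k,b_k)$ appearing in the theorem, so that the non-expansiveness condition and the hypothesis of the theorem coincide on the nose.
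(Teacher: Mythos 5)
Your proof is correct and matches the paper's intended argument: the paper states the corollary without proof precisely because, as it remarks just before Theorem~\ref{thm:cong-join}, $\cat{Q}$-congruentiality of each $\theta_i$ is exactly the hypothesis of that theorem with $K_\sigma = 1$, and the ``moreover'' clause then yields non-expansiveness of the operations on $A \quot \bigl(\bigcjoin_i \theta_i\bigr)$. Your translation between quantitativeness of the quotients and the Lipschitz condition, including the check that both sides use the supremum metric on tuples, is exactly the right (and only) point to verify.
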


\subsection{Permutable Congruences}
\label{sec:percong}

In the classical case, products are characterized by permutable congruences;
this generalizes the characterization theorem of product of groups via normal subgroups,
and that of product of commutative rings by ideals (see \cite{Burris1981}).

In this section, we prove the metric version of this characterization theorem;
in our formulation, completeness is crucial to prove the theorem.

\begin{definition}
  Let $A$ be a metric algebra.
  For congruences $\theta_1$ and $\theta_2$ on $A$,
  a function $\theta_1 \circ \theta_2 \colon A \times A \to \bbRp$ is defined by
  \[
    (\theta_1 \circ \theta_2)(a, b) = \inf_{c \in A} (\theta_1(a, c) + \theta_2(c, b)) \quad.
  \]
  The congruences $\theta_1$ and $\theta_2$ are \emph{permutable}
  if $\theta_1 \circ \theta_2 = \theta_2 \circ \theta_1$ holds.
\end{definition}

\begin{lemma}\label{lem:cong-prod}
  Let $A$ be a metric algebra and $\theta_1$, $\theta_2$ be congruences on $A$.
  Then the following propositions hold:
  \begin{enumerate}
    \item $\theta_i \cle \theta_1 \circ \theta_2$ for each $i = 1, 2$.
    \item $(\theta_1 \circ \theta_2) (a, a) = 0$ for each $a \in A$.
    \item $\theta_1 \circ \theta_2 \cle \theta_1 \curlyvee \theta_2$.
  \end{enumerate}
\end{lemma}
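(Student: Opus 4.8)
The plan is to read all three parts directly off the definition of $\theta_1 \circ \theta_2$ as an infimum, keeping in mind throughout that $\cle$ is the \emph{reversed} pointwise order, so that a statement $\alpha \cle \beta$ unpacks as $\alpha(a,b) \ge \beta(a,b)$ for all $a,b$. Thus parts (1) and (2) both assert that the composition lies pointwise \emph{below} something, and each is obtained by plugging a convenient witness into the infimum. For (2), taking $c = a$ gives $(\theta_1 \circ \theta_2)(a,a) \le \theta_1(a,a) + \theta_2(a,a) = 0$, and nonnegativity forces equality. For (1), taking $c = b$ gives $(\theta_1 \circ \theta_2)(a,b) \le \theta_1(a,b) + \theta_2(b,b) = \theta_1(a,b)$, hence $\theta_1 \cle \theta_1 \circ \theta_2$; symmetrically, taking $c = a$ yields $(\theta_1 \circ \theta_2)(a,b) \le \theta_2(a,b)$, so $\theta_2 \cle \theta_1 \circ \theta_2$.

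For part (3), I would deliberately avoid the explicit chain formula of Theorem~\ref{thm:cong-join}, since that result relies on the Lipschitz hypothesis, which is not assumed here. Instead I would use only the abstract characterization of $\theta_1 \curlyvee \theta_2$ as the least upper bound of $\theta_1$ and $\theta_2$ in $\Con(A)$, together with the triangle inequality. Being an upper bound, $\theta_1 \curlyvee \theta_2 \cge \theta_i$, which unpacks as $(\theta_1 \curlyvee \theta_2)(x,y) \le \theta_i(x,y)$ for all $x,y$ and $i = 1,2$. Therefore, for every $c \in A$,
\[
  \theta_1(a,c) + \theta_2(c,b)
  \ge (\theta_1 \curlyvee \theta_2)(a,c) + (\theta_1 \curlyvee \theta_2)(c,b)
  \ge (\theta_1 \curlyvee \theta_2)(a,b),
\]
where the final step is the triangle inequality for the pseudometric $\theta_1 \curlyvee \theta_2$. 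Taking the infimum over $c$ gives $(\theta_1 \circ \theta_2)(a,b) \ge (\theta_1 \curlyvee \theta_2)(a,b)$, that is, $\theta_1 \circ \theta_2 \cle \theta_1 \curlyvee \theta_2$.

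None of the three parts presents a genuine obstacle; the only points requiring care are tracking the reversed order convention consistently and resisting the temptation to invoke the explicit join formula, which would illegitimately import the Lipschitz assumption. The entire content of (3) is carried by the triangle inequality once the upper-bound inequality $\theta_1 \curlyvee \theta_2 \cge \theta_i$ is in hand.
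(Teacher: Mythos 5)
Your proof is correct and follows essentially the same route as the paper's: parts (1) and (2) by substituting the obvious witness into the infimum, and part (3) by combining the upper-bound property of the join with the triangle inequality of the pseudometric $\theta_1 \curlyvee \theta_2$ and then taking the infimum over $c$. Your side remark is also apt: the paper likewise avoids the explicit chain formula of Theorem~\ref{thm:cong-join}, relying only on the abstract join in $\Con(A)$.
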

\begin{proof}
  (1) For $a, b \in A$, we have
  $\theta_1 \circ \theta_2 (a, b) \le \theta_1 (a, b) + \theta_2 (b, b) = \theta_1 (a, b)$.
  The case $i = 2$ is exactly the same.

  (2) It directly follows from (1) and $\theta_1(a, a) = 0$.

  (3) Let $\theta = \theta_1 \curlyvee \theta_2$.
  For any $a, b, c \in A$,
  we have $\theta_1(a, c) + \theta_2(c, b) \ge \theta(a, c) + \theta(c, b) \ge \theta(a, b)$.
  Taking the infimum over $c$, we have $(\theta_1 \circ \theta_2) (a, b) \ge \theta(a, b)$.
\end{proof}

\begin{theorem}
  For congruences $\theta_1$ and $\theta_2$, the followings are equivalent:
  \begin{enumerate}
    \item $\theta_1 \circ \theta_2 = \theta_2 \circ \theta_1$, i.e., they are permutable.
    \item $\theta_1 \circ \theta_2 = \theta_1 \curlyvee \theta_2$.
    \item $\theta_2 \circ \theta_1 \cle \theta_1 \circ \theta_2$.
  \end{enumerate}
\end{theorem}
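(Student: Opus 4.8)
The plan is to treat $\circ$ as an associative operation on pseudometrics and reduce the whole statement to a short calculus, after which two of the three implications become purely formal and the third carries all the content. The identities I would record first are: associativity, $(\theta\circ\rho)\circ\mu=\theta\circ(\rho\circ\mu)=\inf_{c,c'}(\theta(a,c)+\rho(c,c')+\mu(c',b))$; idempotency of a single congruence, $\theta_i\circ\theta_i=\theta_i$ (the bound $\theta_i\cle\theta_i\circ\theta_i$ is the triangle inequality of $\theta_i$, and the reverse follows by taking the midpoint equal to an endpoint); and the \emph{reflection identity} $(\theta_1\circ\theta_2)(a,b)=(\theta_2\circ\theta_1)(b,a)$, which is immediate from the symmetry of $\theta_1$ and $\theta_2$. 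I would keep Lemma~\ref{lem:cong-prod} at hand throughout, in particular the chain $\theta_i\cle\theta_1\circ\theta_2\cle\theta_1\curlyvee\theta_2$.

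With this in place the soft implications come for free. The implication $(1)\Rightarrow(3)$ is trivial, since an equality forces $\cle$. For $(3)\Rightarrow(1)$ I would substitute $(b,a)$ for $(a,b)$ in the inequality $\theta_2\circ\theta_1\cle\theta_1\circ\theta_2$ and rewrite both sides via the reflection identity; this turns it into $\theta_1\circ\theta_2\cle\theta_2\circ\theta_1$, and the two inequalities together give permutability. For $(2)\Rightarrow(3)$ I would apply Lemma~\ref{lem:cong-prod}(3) to the pair $(\theta_2,\theta_1)$ to obtain $\theta_2\circ\theta_1\cle\theta_2\curlyvee\theta_1=\theta_1\curlyvee\theta_2=\theta_1\circ\theta_2$, using commutativity of the join and the hypothesis (2). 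None of these steps touch anything beyond the pseudometric structure.

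The substance is $(1)\Rightarrow(2)$, which amounts to showing that $\gamma:=\theta_1\circ\theta_2$ is a congruential pseudometric; once this is known, Lemma~\ref{lem:cong-prod}(1) makes $\gamma$ an upper bound of $\{\theta_1,\theta_2\}$ in $\Con(A)$, so $\theta_1\curlyvee\theta_2\cle\gamma$, while Lemma~\ref{lem:cong-prod}(3) gives the reverse, yielding (2). The pseudometric axioms fall straight out of the calculus: reflexivity is Lemma~\ref{lem:cong-prod}(2); the bound $\gamma\le d^A$ follows from $\gamma(a,b)\le\theta_1(a,b)\le d^A(a,b)$; symmetry is the reflection identity combined with permutability; and the triangle inequality is exactly the computation $\gamma\circ\gamma=\theta_1\circ(\theta_2\circ\theta_1)\circ\theta_2=\theta_1\circ(\theta_1\circ\theta_2)\circ\theta_2=(\theta_1\circ\theta_1)\circ(\theta_2\circ\theta_2)=\gamma$, where permutability is used in the middle and idempotency at the two ends.

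The hard part is that the zero-set $\sim_\gamma$ must be a $\Sigma$-congruence, and here the relational argument of the classical theorem breaks down. The condition $\gamma(a,b)=0$ only asserts that the defining infimum vanishes, so it supplies a sequence $c_n$ with $\theta_1(a,c_n),\theta_2(c_n,b)\to 0$ rather than a single witness $c$ with $\theta_1(a,c)=\theta_2(c,b)=0$; and one cannot push such a sequence through an operation $\sigma$, because $\theta_i(\sigma a,\sigma c_n)$ need not tend to $0$ when $\theta_i(a,c_n)\to 0$ (metric algebras carry no continuity of operations). This is precisely where completeness of $A$ is meant to enter, as flagged in the section introduction: I would try to upgrade the approximating sequence to an honest witness $c$, after which compatibility of $\sim_{\theta_1}$ and $\sim_{\theta_2}$ forces $\gamma(\sigma\vec a,\sigma\vec b)=0$ as in the classical case. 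Making that passage rigorous — realizing the unattained infimum as an exact composite via completeness, given that $\theta_i$-smallness does not control the distance $d^A$ in which $A$ is complete — is the step I expect to be the genuine obstacle, and the only place where more than formal manipulation of $\circ$ is required.
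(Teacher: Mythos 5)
Your handling of the formal part is correct and, where it overlaps, coincides with the paper's proof: the reduction of $(1)\Rightarrow(2)$ via parts (1) and (3) of Lemma~\ref{lem:cong-prod} to showing that $\gamma=\theta_1\circ\theta_2$ is itself a congruence, the symmetry argument via the reflection identity plus permutability, and your $(2)\Rightarrow(3)$ and $(3)\Rightarrow(1)$ are literally the arguments in the paper (your extra implication $(1)\Rightarrow(3)$ is harmless). Your derivation of the triangle inequality from $\gamma\circ\gamma=\theta_1\circ(\theta_2\circ\theta_1)\circ\theta_2=(\theta_1\circ\theta_1)\circ(\theta_2\circ\theta_2)=\gamma$ is actually cleaner than the paper's, which runs an explicit $\e$-chase with two approximate midpoints. (One notational slip: in the paper's reversed order, the triangle inequality of $\theta_i$ reads $\theta_i\circ\theta_i\cle\theta_i$, while ``midpoint equal to an endpoint'' gives $\theta_i\cle\theta_i\circ\theta_i$; you have the two directions swapped, though the conclusion $\theta_i\circ\theta_i=\theta_i$ is right.)

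The genuine issue is the one you flag yourself: Definition~\ref{def:cong} demands that the zero-set of $\gamma$ be a $\Sigma$-congruence, and permutability gives no quantitative control of the operations, so the approximate witnesses $c_n$ cannot be pushed through $\sigma$. Two things you should know about this. First, your proposed repair is not available: the theorem carries no completeness hypothesis --- completeness appears only in Theorem~\ref{thm:prod-cong}, and the remark in the section introduction that ``completeness is crucial'' refers to that product-characterization theorem, not to this one. Moreover, completeness alone would not close the gap even if you had it, because your sequence $(c_n)$ is small with respect to $\theta_1$ and $\theta_2$, not with respect to $d$, so it need not be $d$-Cauchy; in the proof of Theorem~\ref{thm:prod-cong} it is precisely the extra hypothesis $\theta_1\cmeet\theta_2=d$ that converts $\theta_i$-smallness into $d$-smallness. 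Second, and more importantly for calibrating your expectations: the paper's own proof of $(1)\Rightarrow(2)$ never addresses this step at all --- it verifies reflexivity, symmetry, announces that ``it remains to prove the triangle inequality,'' proves that, and stops. So the obstacle you identified is not a step of the paper's argument that you failed to reconstruct; it is a gap in the paper. Whether the zero-set condition can genuinely fail for permutable congruential pseudometrics (which would falsify the statement as written) or admits a proof not given in the paper is, as far as this text goes, open; your proposal is at parity with the published proof on this point and is more candid about where the difficulty sits.
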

\begin{proof}
  $(1 \Rightarrow 2)$ By (1) and (3) of Lemma~\ref{lem:cong-prod}, we only have to show
  $\theta_1 \circ \theta_2$ is a congruence. By (2) of Lemma~\ref{lem:cong-prod},
  we have $(\theta_1 \circ \theta_2)(a, a) = 0$.
  For $a, b \in A$, we have $(\theta_1 \circ \theta_2)(a, b) = (\theta_2 \circ \theta_1)(b, a)$,
  and by the permutability, it is equal to $(\theta_1 \circ \theta_2)(b, a)$.
  It remains to prove the triangle inequality.
  For $a, b, c \in A$,
  \[
    (\theta_1 \circ \theta_2)(a, b) + (\theta_1 \circ \theta_2)(b, c)
    = \inf_{d, e \in A} ( \theta_1(a, d) + \theta_2(d, b) + \theta_1(b, e) + \theta_2(e, c) )
  \]
  by definition. Let us fix $\e \gt 0$. Since $\theta_1 \circ \theta_2 = \theta_2 \circ \theta_1$,
  there exists some $g \in A$ such that
  $\theta_1(d, g) + \theta_2(g, e) \le \theta_2(d, b) + \theta_1(b, e) + \e$.
  Then
  \begin{align*}
    \theta_1(a, d) + \theta_2(d, b) &+ \theta_1(b, e) + \theta_2(e, c) + \e \\
    &\ge \theta_1(a, d) + \theta_1(d, g) + \theta_2(g, e) + \theta_2(e, c) \\
    &\ge \theta_1(a, g) + \theta_2(g, c) \\
    &\ge (\theta_1 \circ \theta_2) (a, c) \quad .
  \end{align*}
  Letting $\e \to 0$, we have
  $\theta_1(a, d) + \theta_2(d, b) + \theta_1(b, e) + \theta_2(e, c) \ge (\theta_1 \circ \theta_2) (a, c)$.
  By taking the infimum over $d$ and $e$, the proof is complete.

  $(2 \Rightarrow 3)$ By (3) of Lemma~\ref{lem:cong-prod},
  we have $\theta_2 \circ \theta_1 \cle \theta_2 \cjoin \theta_1 = \theta_1 \cjoin \theta_2 = \theta_1 \circ \theta_2$.

  $(3 \Rightarrow 1)$ It suffices to show $\theta_2 \circ \theta_1 \cge \theta_1 \circ \theta_2$.
  For $a, b \in A$, we have
  $(\theta_2 \circ \theta_1) (a, b) = (\theta_1 \circ \theta_2)(b, a) \le (\theta_2 \circ \theta_1) (b, a)s
  = (\theta_1 \circ \theta_2)(a, b)$, which concludes the proof.
\end{proof}

\begin{lemma}
  \label{lem:quot-perm}
  Let $A$ be a metric algebra and $\theta, \theta_1, \theta_2$ be congruences
  on $A$ satisfying $\theta \cle \theta_i$ for $i = 1, 2$.
  Then $\theta \cle \theta_1 \circ \theta_2$
  and $(\theta_1 \circ \theta_2) \quot \theta = (\theta_1 \quot \theta) \circ (\theta_2 \quot \theta)$.

  In particular, if $\theta_1$ and $\theta_2$ are permutable,
  $\theta_1 \quot \theta$ and $\theta_2 \quot \theta$ are also permutable.
\end{lemma}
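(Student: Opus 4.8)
The plan is to handle the two displayed assertions in turn and then read off the permutability clause as a formal consequence.

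First, for $\theta \cle \theta_1 \circ \theta_2$ I would simply chain inequalities. By Lemma~\ref{lem:cong-prod}(1) we have $\theta_i \cle \theta_1 \circ \theta_2$ for $i = 1, 2$, and by hypothesis $\theta \cle \theta_i$; since $\cle$ is transitive this yields $\theta \cle \theta_1 \circ \theta_2$ at once. The point of this step is not the inequality in isolation but that it guarantees all three quotients $(\theta_1 \circ \theta_2) \quot \theta$, $\theta_1 \quot \theta$ and $\theta_2 \quot \theta$ are legitimate congruences on $A \quot \theta$, so that the equality in the second assertion even makes sense.

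For the equality $(\theta_1 \circ \theta_2) \quot \theta = (\theta_1 \quot \theta) \circ (\theta_2 \quot \theta)$ I would evaluate both sides at a pair $([a], [b])$ of $\theta$-classes. The left-hand side is, by definition of the quotient congruence, $(\theta_1 \circ \theta_2)(a, b) = \inf_{c \in A} (\theta_1(a, c) + \theta_2(c, b))$, whereas the right-hand side unfolds to $\inf_{[c]}\, ((\theta_1 \quot \theta)([a],[c]) + (\theta_2 \quot \theta)([c],[b])) = \inf_{[c]} (\theta_1(a, c) + \theta_2(c, b))$, the infimum now ranging over $\theta$-classes. The crux is therefore to identify an infimum over $A$ with an infimum over $A \quot \theta$ of the \emph{same} expression $g(c) = \theta_1(a,c) + \theta_2(c,b)$. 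This works because $g$ is constant on $\theta$-classes: if $\theta(c, c') = 0$, then $\theta \cle \theta_i$ forces $\theta_i(c, c') = 0$, and the triangle inequality gives $\theta_1(a, c) = \theta_1(a, c')$ and $\theta_2(c, b) = \theta_2(c', b)$. Since $g$ factors through the surjective projection $A \to A \quot \theta$, the two infima coincide, which is the desired equality. I expect this class-invariance argument to be the only real content of the proof; everything else is unwinding definitions.

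Finally, the permutability clause follows formally. Applying the equality just proved in both orders (the symmetric statement for $\theta_2 \circ \theta_1$ holds by swapping indices, its hypothesis $\theta \cle \theta_2 \circ \theta_1$ again coming from the first assertion) yields $(\theta_1 \circ \theta_2) \quot \theta = (\theta_1 \quot \theta) \circ (\theta_2 \quot \theta)$ and $(\theta_2 \circ \theta_1) \quot \theta = (\theta_2 \quot \theta) \circ (\theta_1 \quot \theta)$. If $\theta_1$ and $\theta_2$ are permutable, then $\theta_1 \circ \theta_2 = \theta_2 \circ \theta_1$, so taking quotients by $\theta$ equates the two right-hand sides, i.e.\ $(\theta_1 \quot \theta) \circ (\theta_2 \quot \theta) = (\theta_2 \quot \theta) \circ (\theta_1 \quot \theta)$, which is exactly the permutability of $\theta_1 \quot \theta$ and $\theta_2 \quot \theta$.
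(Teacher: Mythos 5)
Your proof is correct and takes essentially the same route as the paper's: the paper establishes $\theta \cle \theta_1 \circ \theta_2$ by the same kind of pointwise estimate (computed directly rather than via Lemma~\ref{lem:cong-prod}(1) and transitivity, but with identical content), and it dismisses the displayed equality as following ``easily from the definition of the quotient of congruences,'' which is precisely the class-invariance argument you spell out. Your explicit verification that $c \mapsto \theta_1(a,c) + \theta_2(c,b)$ is constant on $\theta$-classes, and the formal deduction of the permutability clause, simply fill in the details the paper leaves to the reader.
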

\begin{proof}
  For any $a, b \in A$,
  \begin{align*}
      (\theta_1 \circ \theta_2) (a, b) &= \inf_{c \in A} (\theta_1(a, c) + \theta_2(c, b)) \\
      &\le \inf_{c \in A} (\theta(a, c) + \theta(c, b)) \\
      &\le \theta(a, b) \quad .
  \end{align*}
  Therefore $\theta_1 \circ \theta_2 \cge \theta$ holds.
  The equation $(\theta_1 \circ \theta_2) \quot \theta = (\theta_1 \quot \theta) \circ (\theta_2 \quot \theta)$
  easily follows from the definition of the quotient of congruences.
\end{proof}

\begin{theorem}\label{thm:prod-cong}
  Let $A = (A, d)$ be a complete metric algebra
  and $\theta_1$, $\theta_2$ be congruences on $A$.
  The canonical homomorphism $f \colon A \to A \quot \theta_1 \times A \quot \theta_2$
  is isomorphic if the following conditions hold:
  \begin{enumerate}
    \item $\theta_1 \cmeet \theta_2 = d$.
    \item $\theta_1 \cjoin \theta_2 = 0$.
    \item $\theta_1$ and $\theta_2$ are permutable.
  \end{enumerate}
\end{theorem}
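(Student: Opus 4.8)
The plan is to show that $f$ is both an isometric embedding and a surjection, and then to invoke the fact that a bijective isometric homomorphism between metric algebras is an isomorphism. Since $f$ sends $a$ to $([a]_{\theta_1}, [a]_{\theta_2})$ and is the pairing of the two canonical projections $A \to A \quot \theta_i$, it is automatically a $\Sigma$-homomorphism; only its metric behaviour needs attention.

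For the embedding part, I would unfold the supremum metric on $A \quot \theta_1 \times A \quot \theta_2$: the distance between $f(a)$ and $f(b)$ is $\max(\theta_1(a,b), \theta_2(a,b))$. By Corollary~\ref{cor:cong-complat} and the accompanying description of meets in the reversed pointwise order, this maximum is precisely $(\theta_1 \cmeet \theta_2)(a,b)$, so condition~(1) yields $\max(\theta_1(a,b),\theta_2(a,b)) = d(a,b)$. Hence $f$ is isometric, and in particular injective. This step is immediate and uses condition~(1) only.

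Surjectivity is the substantive part. From condition~(3) and the preceding characterization of permutable congruences, $\theta_1 \circ \theta_2 = \theta_1 \cjoin \theta_2$, which by condition~(2) equals $0$; thus $(\theta_1 \circ \theta_2)(x,y) = \inf_{c \in A}(\theta_1(x,c) + \theta_2(c,y)) = 0$ for all $x,y \in A$. Given a target $([x]_{\theta_1}, [y]_{\theta_2})$, this only supplies \emph{approximate} preimages, so for each $n$ I would pick $c_n$ with $\theta_1(x,c_n) + \theta_2(c_n,y) < 2^{-n}$, forcing $\theta_1(x,c_n) \to 0$ and $\theta_2(c_n,y) \to 0$. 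The key observation is that condition~(1) lets me convert these two separate estimates into a single distance bound: from $\theta_1(c_n, c_m) \le \theta_1(c_n,x) + \theta_1(x,c_m) \le 2^{-n} + 2^{-m}$ and the symmetric bound for $\theta_2$, together with $d = \max(\theta_1,\theta_2)$, I get $d(c_n, c_m) \le 2^{-n} + 2^{-m}$, so $(c_n)$ is Cauchy in $A$. Completeness now furnishes a limit $c$, and since $\theta_i \le d$ each map $z \mapsto \theta_i(x,z)$ is non-expansive, hence $d$-continuous; therefore $\theta_1(x,c) = \lim_n \theta_1(x,c_n) = 0$ and $\theta_2(c,y) = \lim_n \theta_2(c_n,y) = 0$, i.e.\ $f(c) = ([x]_{\theta_1}, [y]_{\theta_2})$.

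I expect the passage from approximate to exact preimages to be the only real obstacle. Condition~(1) plays a double role here, supplying both the isometry and the mechanism for collapsing the two $\theta_i$-estimates into one $d$-Cauchy estimate, while completeness is exactly what is needed to realize the vanishing infimum $(\theta_1 \circ \theta_2)(x,y) = 0$ by a genuine element of $A$ rather than a mere limit of distances. Once surjectivity is secured, combining it with the isometry from the first step completes the argument.
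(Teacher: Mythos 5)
Your proposal is correct and follows essentially the same route as the paper: condition (1) gives the isometry via the pointwise-max description of $\theta_1 \cmeet \theta_2$, permutability plus condition (2) yields $\theta_1 \circ \theta_2 = 0$ and hence approximate preimages $c_n$, condition (1) again converts the two $\theta_i$-estimates into a $d$-Cauchy estimate, and completeness plus the continuity of $\theta_i$ (which the paper realizes through the triangle inequality $\theta_i(a_i,c) \le \theta_i(a_i,c^n) + d(c^n,c)$) produces the exact preimage. No gaps; the argument matches the paper's proof step for step.
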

\begin{proof}
  For $a, b \in A$, we have $d(f(a), f(b)) = \max(\theta_1(a, b), \theta_2(a, b)) = d(a, b)$, so $f$ is isometric.
  We show that $f$ is surjective. Suppose $a_1, a_2 \in A$.
  Since $\theta_1 \circ \theta_2 = \theta_1 \cjoin \theta_2 = 0$,
  there exists a sequence $(c^n)_{n=1}^{\infty}$ in $A$ such that
  $\theta_1(a_1, c^n) + \theta_2(c^n, a_2) \le 2^{-n}$ for each $n \in \omega$.
  Since $\theta_i(c^n, c^m) \le \theta_i(c^n, a_i) + \theta_i(a_i, c^m) \le 2^{-n+1}$
  for $i = 1, 2$ and $n \lt m$, we also have $d(c^n, c^m) \le 2^{-n+1}$ by
  $\theta_1 \cmeet \theta_2 = d$. Therefore $(c^n)_{n=1}^{\infty}$ is a Cauchy sequence
  in $(A, d)$ and has a convergent point $c = \lim_{n \to \infty} c^n$.
  Since $\theta_i(a_i, c) \le \theta_i(a_i, c^n) + \theta_i(c^n, c) \le 2^{-n} + d(c^n, c) \to 0$
  (as $n \to \infty$),
  we conclude $\theta_1(a_1, c) = \theta_2(a_2, c) = 0$, that is, $f(c) = ([a_1], [a_2])$.
\end{proof}

By inductively applying Theorem~\ref{thm:prod-cong},
we acquire a slightly generalized version of the theorem for the arbitrary finite cases.

\begin{corollary}
  Let $A = (A, d)$ be a complete metric algebra
  and $(\theta_i)_{i=1}^{n}$ be a family of congruences on $A$.
  The canonical homomorphism $f \colon A \to \prod_{i=1}^n A \quot \theta_i$
  is isomorphic if the following conditions hold:
  \begin{enumerate}
    \item $\bigcurlywedge_{i=1}^n \theta_i = d$.
    \item $(\bigcurlywedge_{i=1}^{k-1} \theta_i) \cjoin \theta_k = 0$ for each $k = 2, \ldots, n$.
    \item $\theta_i$ and $\theta_j$ are permutable for each $i \neq j$.
  \end{enumerate}
\end{corollary}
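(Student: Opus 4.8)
The plan is to induct on $n$, splitting off the last factor and recursing on the quotient by $\rho := \bigcmeet_{i=1}^{n-1}\theta_i$, which is a congruence by Corollary~\ref{cor:cong-complat} and equals the pointwise supremum $\max_{i<n}\theta_i$. The base case $n=2$ is exactly Theorem~\ref{thm:prod-cong}. Before starting I would extract a cleaner, symmetric consequence of hypothesis (2). Writing $\rho_k=\bigcmeet_{i=1}^{k}\theta_i$, each $\theta_i$ with $i<k$ satisfies $\rho_{k-1}\cle\theta_i$ (since $\rho_{k-1}\ge\theta_i$ pointwise); as $\cjoin$ is monotone for $\cle$ and the zero pseudometric $0$ is the $\cle$-greatest element, $0=\rho_{k-1}\cjoin\theta_k\cle\theta_i\cjoin\theta_k$ forces $\theta_i\cjoin\theta_k=0$. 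Ranging over $k$ yields pairwise comaximality $\theta_i\cjoin\theta_j=0$ for all $i\neq j$, and with hypothesis (3) and the permutability criterion ``$\theta_i\circ\theta_j=\theta_i\cjoin\theta_j$'' this gives $\theta_i\circ\theta_j=0$ for every $i\neq j$.

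For the inductive step I would apply Theorem~\ref{thm:prod-cong} to the complete algebra $A$ with the pair $(\rho,\theta_n)$. Its condition (1) is $\rho\cmeet\theta_n=\bigcmeet_{i=1}^{n}\theta_i=d$, which is our hypothesis (1); its condition (2) is $\rho\cjoin\theta_n=0$, which is our hypothesis (2) for $k=n$. Granting its third condition for the moment, we obtain $A\isom A\quot\rho\times A\quot\theta_n$. In particular $A\quot\rho$ is isometric to a closed subspace of the complete space $A$ (fix any point in the $A\quot\theta_n$ factor), hence is itself complete, so the inductive hypothesis applies to it. Each $\theta_i$ ($i<n$) satisfies $\theta_i\cge\rho$, so it descends to a congruence $\theta_i\quot\rho$ on $A\quot\rho$ under the lattice isomorphism between $\Con(A\quot\rho)$ and the congruences above $\rho$; this isomorphism preserves meets and joins, carrying hypotheses (1) and (2) to the corresponding identities for $(\theta_i\quot\rho)_{i<n}$, while Lemma~\ref{lem:quot-perm} transports pairwise permutability (here $\rho\cle\theta_i$ for all $i<n$). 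The inductive hypothesis then gives $A\quot\rho\isom\prod_{i<n}(A\quot\rho)\quot(\theta_i\quot\rho)$, and the identification $(A\quot\rho)\quot(\theta_i\quot\rho)\isom A\quot\theta_i$ (immediate from the same lattice isomorphism) lets me compose with $A\isom A\quot\rho\times A\quot\theta_n$; a routine check that the composite is the canonical map $f$ finishes the step.

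The main obstacle is precisely the third condition of Theorem~\ref{thm:prod-cong} for $(\rho,\theta_n)$, i.e.\ the permutability of the meet $\rho=\bigcmeet_{i<n}\theta_i$ with $\theta_n$. Inspecting the proof of Theorem~\ref{thm:prod-cong}, permutability is used only to pass from $\rho\cjoin\theta_n=0$ to $\rho\circ\theta_n=0$: the isometry part needs only condition (1), and the surjectivity part builds a Cauchy sequence out of $\rho\circ\theta_n=0$ using completeness and condition (1). Thus what is really required is the single identity $\rho\circ\theta_n=0$, equivalently the \emph{simultaneous approximation} statement that for all $a,b$ and $\e>0$ there is $c$ with $\max_{i<n}\theta_i(a,c)\le\e$ and $\theta_n(c,b)\le\e$. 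This does \emph{not} follow formally from the pairwise identities $\theta_i\circ\theta_n=0$: the maximum inside the infimum defining $\rho\circ\theta_n$ prevents combining the separate witnesses for each coordinate into one, and permutability of a meet is well known to fail from pairwise permutability alone.

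I expect this to be the crux, and since invoking the already-decomposed $A\quot\rho$ to prove it would be circular, I would aim to establish $\rho\circ\theta_n=0$ directly (then reprove surjectivity from $\rho\circ\theta_n=0$, condition (1) and completeness, bypassing condition (3) of the theorem). The natural attack is a Chinese-Remainder-style iterative construction: starting from a witness correcting one coordinate, one cycles through the coordinates using the pairwise comaximality $\theta_i\circ\theta_j=0$ to reduce each defect geometrically while bounding the damage to the others through condition (1), so that the resulting sequence is Cauchy and its limit is a simultaneous solution. The delicate point will be the defect-control estimate that makes this iteration converge; this is exactly where completeness enters essentially, and it is the reason the metric statement can hold without the lattice-theoretic hypotheses (distributivity or global congruence-permutability) that the purely algebraic analogue would demand. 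An alternative is to strengthen the inductive hypothesis to assert that $\bigcmeet_{i\le m}\theta_i$ permutes with any congruence permuting with each $\theta_i$, and to maintain this invariant through the recursion.
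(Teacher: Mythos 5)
Your skeleton---induction on $n$, splitting off the last congruence and applying Theorem~\ref{thm:prod-cong} to the pair $(\rho,\theta_n)$ with $\rho=\bigcmeet_{i<n}\theta_i$, then transporting hypotheses (1)--(3) to $A\quot\rho$ via the lattice isomorphism and Lemma~\ref{lem:quot-perm}---is exactly the paper's proof (the paper does not even spell out the completeness of $A\quot\rho$, which you do). Where you diverge is instructive: the paper simply \emph{asserts} that $\rho$ and $\theta_n$ ``are permutable'' and invokes Theorem~\ref{thm:prod-cong}, whereas you correctly flag that permutability of the meet $\rho$ with $\theta_n$ does not follow from pairwise permutability of the $\theta_i$. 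Your diagnosis of the crux is right, and it exposes a genuine gap in the paper's own argument.

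However, your proposed repair---establishing $\rho\circ\theta_n=0$ directly by a Chinese-remainder-style iteration---cannot succeed, because the missing claim, and indeed the corollary as stated, is false. Take $\Sigma=\emptyset$ and $A=\{0,1\}^3\setminus\{(1,1,1)\}$ with the discrete extended metric ($d(x,y)=\infty$ for $x\neq y$), which is complete, and let $\theta_i(x,y)=0$ if $x_i=y_i$ and $\infty$ otherwise, for $i=1,2,3$. Pairwise one has $\theta_i\circ\theta_j\equiv 0$: for instance a witness for $\theta_1\circ\theta_2$ at $(x,y)$ is $c=(x_1,y_2,0)$, which always avoids the deleted point; so condition (3) holds and all pairwise joins are $0$. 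Condition (1) holds since $x\neq y$ forces $\max_i\theta_i(x,y)=\infty=d(x,y)$. Condition (2) holds as well: $\theta_1\cjoin\theta_2=0$, and $(\theta_1\cmeet\theta_2)\cjoin\theta_3=0$ because all seven points are linked by steps that are either $(\theta_1\cmeet\theta_2)$-zero or $\theta_3$-zero (e.g.\ $(1,1,0)$ reaches $(0,0,0)$ by a $\theta_3$-step). Yet $A$ has $7$ points while $\prod_{i=1}^3 A\quot\theta_i$ has $8$, so the canonical map is injective but not surjective. Concretely, $(\rho\circ\theta_3)\bigl((1,1,0),(0,0,1)\bigr)=\infty$ with $\rho=\theta_1\cmeet\theta_2$, since the only candidate simultaneous witness is the deleted point $(1,1,1)$: the simultaneous-approximation statement you aim at fails outright, and no completeness or defect-control argument can help when the defects take only the values $0$ and $\infty$. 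The same example also defeats the alternative invariant you suggest at the end ($\theta_3$ permutes with $\theta_1$ and with $\theta_2$, but not with $\theta_1\cmeet\theta_2$). The only viable repair is to strengthen the hypotheses of the statement itself, e.g.\ to assume outright that $\bigcmeet_{i<k}\theta_i$ and $\theta_k$ are permutable for each $k$, which is precisely what the induction consumes.
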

\begin{proof}
  The proof is by induction on $n$.
  The case $n = 1$ is obviously valid.
  Let us suppose that the proposition holds for $n$; then we prove it for $n+1$.
  Assume $(\theta_i)_{i=1}^{n+1}$ is a family of congruences satisfying the conditions.
  Let $\rho_1 = \bigcurlywedge_{i=1}^n \theta_i$ and $\rho_2 = \theta_{n+1}$.
  Since $\rho_1 \cmeet \rho_2 = d$, $\rho_1 \cjoin \rho_2 = 0$
  and $\rho_1$ and $\rho_2$ are permutable, then
  $A$ is canonically isomorphic to $A \quot \rho_1 \times A \quot \rho_2$.
  The family $(\theta_i \quot \rho_1)_{i=1}^{n}$ of congruences on $A \quot \rho_1$
  satisfies the assumption of the proposition, therefore
  $A \quot \rho_1$ is isomorphic to $\prod_{i=1}^{n} A \quot \theta_i$ by the induction hypothesis.
\end{proof}

\begin{remark}
  The completeness of $A$ is essential.
  Let $\Sigma = \emptyset$.
  Consider $X = [0, 1]^2 \setminus \{(0, 0)\}$ with the supremum metric
  and congruences $\theta_i((x_1, x_2),(y_1, y_2)) = \abs{x_i - y_i}$ for $i = 1, 2$.
  These congruences satisfy $\theta_1 \circ \theta_2 = \theta_2 \circ \theta_1 = 0$;
  let $x = (x_1, x_2)$ and $y = (y_1, y_2)$ in $X$.
  For $\e \ge 0$, take $z = (x_1 + \e, y_2 + \e) \in X$ and then we have
  $\theta_1 \circ \theta_2 (x, y) \le \theta_1(x, z) + \theta_2(z, y) = 2 \e$.
  Letting $\e \to 0$, we get $\theta_1 \circ \theta_2 (x, y) = 0$.
  Similarly we have $\theta_2 \circ \theta_1 = 0$.
  However $X \quot \theta_i \isom [0, 1]$ and $X$ is not isometric
  to $[0, 1]^2$.
\end{remark}

\section{Syntax and Logic}
\label{ch:logical-metric}

So far we have explained the model theoretic aspect of metric algebras.
In this section, we give the syntax to describe properties of metric algebras,
and prove some basic theorems such as a weak form of the compactness theorem.

\subsection{Syntax for Metric Algebra}
\label{sec:metric-equation}

We use indexed equations $s =_\e t$ for atomic formulas in the theory of metric algebras,
differently from usual equations $s = t$ in the classical case.

\begin{definition}[\cite{Weaver1995, Mardare2016}]
  Let $X$ be a variable set.
  \begin{itemize}
    \item
    A \emph{metric equation} (also called an \emph{atomic inequality} \cite{Weaver1995})
    \emph{over $X$}
    is a formula of the form $s =_{\e} t$
    where $s, t \in \Term_{\Sigma} X$ and $\e \ge 0$.

    A \emph{metric implication over $X$} is a formula of
    the form $\bigwedge_{i=1}^n s_i =_{\e_i} t_i \to s =_{\e} t$
    where $s_i =_{\e_i} t_i$ and $s =_{\e} t$ are metric equations over $X$.
    We will identify a metric equation with a metric implication where $n = 0$.

    A \emph{basic quantitative inference over $X$} is a metric implication
    where $s_i$ and $t_i$ are restricted to variables.
    A \emph{$\kappa$-basic quantitative inference} is
    its generalization that allows infinitely many assumptions smaller than $\kappa$.

    \item   Given a metric algebra $A$, a metric equation $s =_{\e} t$ over $X$,
      and a map $v \colon X \to A$,
      we say \emph{$A$ satisfies $s =_{\e} t$ under $v$},
      denoted by $A, v \models s =_{\e} t$, if we have $d(\sem{s}_v, \sem{t}_v) \le \e$.
      We simply say \emph{$A$ satisfies $s =_{\e} t$},
      denoted by $A \models s =_{\e} t$,
      when $A, v \models s =_{\e} t$ holds for any $v \colon X \to A$.
      These notions are similarly defined for metric implications.

    \item
      Let $\cat{K}$ be a class of metric algebras and $\Phi \cup \{ \phi \}$
      be a set of metric implications.
      We write $\cat{K} \models \phi$ if $B \models \phi$ holds for any $B \in \cat{K}$.
      We also define $A \models \Phi$ and $\cat{K} \models \Phi$ similarly.

    \item Let $\Delta \cup \{s =_\e t\}$ be a set of metric equations over $X$.
      We write $\Delta \models_{\cat{K}} s =_\e t$ if, for $A \in \cat{K}$ and a map $v \colon X \to A$
      with $A, v \models \Delta$, we have $A, v \models s =_\e t$.

    \item  Given a class $\cat{V}$ of metric algebras and a set $\Phi$ of metric implications,
      we define the class $\cat{V}(\Phi)$ by $\cat{V}(\Phi) = \set{A \in \cat{V}}{A \models \Phi}$,
      called \emph{the class defined in $\cat{V}$ by $\Phi$}.
      When $\cat{V} = \cat{M}$, we simply call it \emph{the class defined by $\Phi$}
  \end{itemize}
\end{definition}

Given a pseudometric $\theta$ on a set $X$, we identify $\theta$ with a set $E_\theta$ of
metric equations over $X$ defined by $E_\theta = \set{x =_\e y}{x, y \in X,\, \theta(x, y) \le \e}$.
This view is consistent with the reversed pointwise order on $\Con A$:
we have $\theta_1 \cle \theta_2$ if and only if $\theta_1 \subset \theta_2$ holds.

\subsection{Presentation and Free Algebra}

As in classical universal algebra, a metric algebra can be presented by generators and relations
in a given class $\cat{K}$. As the special case, we give the construction of
$\cat{K}$-free algebras.

\begin{definition}
  \label{presentation-malg}
  A \emph{presentation of a metric algebra} is a pair $(X, \Delta)$
  where $X$ is a set and $\Delta$ is a set of metric equations over $X$.

  Let $\cat{K}$ be a class of metric algebras. Given a presentation $(X, \Delta)$,
  the \emph{metric algebra defined by $(X, \Delta)$ in $\cat{K}$}
  is a metric algebra $\Free_{\cat{K}} (X, \Delta) = \Term_{\Sigma} X \quot \theta_\Delta$
  where $\theta_\Delta$ is the smallest $\bbS(\cat{K})$-congruential pseudometric that contains $\Delta$.
  It is equipped with a map $\eta \colon X \to \Free_{\cat{K}} (X, \Delta)$
  defined by $\eta(x) = [x]$, which is called its \emph{unit}.

  We write $\Free_{\cat{K}} X$ when $\Delta = \emptyset$,
  which is called \emph{the $\cat{K}$-free algebra over $X$},
  and write $\Free_{\cat{K}} (X, d)$ for a metric space $(X, d)$
  when $\Delta = \set{x =_\e y}{d(x, y) \le \e}$,
  which is called \emph{the $\cat{K}$-free algebra over $(X, d)$}.
\end{definition}

\begin{lemma}
  \label{lem:presentation}
  Let $s =_\e t$ be a metric equation. In Definition~\ref{presentation-malg}:
  \begin{enumerate}
    \item $\Free_{\cat{K}} (X, \Delta), \eta \models \Delta$ holds.
    \item For any $A \in \cat{K}$ and a map $f \colon X \to A$
      where $A, f \models \Delta$ holds, there exists
      a unique homomorphism $h \colon \Free_{\cat{K}} (X, \Delta) \to A$ such that
      $h \circ \eta = f$.
    \item $\Free_{\cat{K}} (X, \Delta), \eta \models s =_\e t$
      if and only if $\Delta \models_{\cat{K}} s =_{\e} t$.
    \item If $\cat{K}$ is a prevariety, then
      $\Free_{\cat{K}} (X, \Delta)$ belongs to $\cat{K}$.
  \end{enumerate}
\end{lemma}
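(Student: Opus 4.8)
The plan is to establish the four assertions in the order given, since each relies on the machinery of the smallest $\bbS(\cat{K})$-congruential pseudometric $\theta_\Delta$ containing $\Delta$, whose existence is guaranteed by the completeness of the congruence lattice (Corollary~\ref{cor:cong-complat}, noting that $\bbS(\cat{K})$ is closed under subdirect products since subdirect products are in particular subalgebras of products). For (1), I would unwind the identification between pseudometrics on $X$ and sets of metric equations: the relation $\theta_\Delta(x, y) \le \e$ for a metric equation $x =_\e y$ in $\Delta$ holds precisely because $\theta_\Delta$ contains $\Delta$ in the sense that $\Delta \subset E_{\theta_\Delta}$. Passing to the quotient $\Term_\Sigma X \quot \theta_\Delta$ with unit $\eta(x) = [x]$, I would check that $d^{\Free}([s], [t]) = \theta_\Delta(s, t) \le \e$ whenever $s =_\e t \in \Delta$, so that $\Free_{\cat{K}}(X, \Delta), \eta \models \Delta$.

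For (2), the key is the universal property of the free $\Sigma$-algebra $\Term_\Sigma X$ combined with the First Isomorphism Theorem (Proposition~\ref{first-isom-thm}). Given $A \in \cat{K}$ and $f \colon X \to A$ with $A, f \models \Delta$, the map $f$ extends uniquely to a $\Sigma$-homomorphism $f^\sharp \colon \Term_\Sigma X \to A$. I would then argue that $\ker(f^\sharp)$ is an $\bbS(\cat{K})$-congruential pseudometric containing $\Delta$: it is $\bbS(\cat{K})$-congruential because $\Term_\Sigma X \quot \ker(f^\sharp) \isom \im(f^\sharp)$ is a subalgebra of $A \in \cat{K}$, and it contains $\Delta$ because $A, f \models \Delta$ forces $d(\sem{s}_f, \sem{t}_f) \le \e$ for each $s =_\e t \in \Delta$. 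By minimality of $\theta_\Delta$, we get $\theta_\Delta \cle \ker(f^\sharp)$, and Proposition~\ref{first-isom-thm} yields the desired unique $h \colon \Free_{\cat{K}}(X, \Delta) \to A$ with $h \circ \eta = f$, non-expansiveness coming from $\theta_\Delta \cle \ker(f^\sharp)$.

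For (3), the forward direction is immediate once we interpret satisfaction: if $\Free_{\cat{K}}(X, \Delta), \eta \models s =_\e t$ then $\theta_\Delta(s, t) \le \e$, and I would use (2) to transport any witness $A, v \models \Delta$ in $\cat{K}$ back along the induced homomorphism, concluding $A, v \models s =_\e t$. For the converse, assuming $\Delta \models_{\cat{K}} s =_\e t$, the main obstacle is showing $\theta_\Delta(s, t) \le \e$; here I expect to use the characterization of $\theta_\Delta$ as a supremum (equivalently, infimum in the pointwise order) of kernels of homomorphisms into algebras of $\cat{K}$ that satisfy $\Delta$, so that the semantic entailment forces the distance bound. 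This is the delicate step, since it requires relating the smallest congruential pseudometric to the family of all $\cat{K}$-models of $\Delta$, and care is needed because we work with $\bbS(\cat{K})$ rather than $\cat{K}$ itself.

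Finally, for (4), when $\cat{K}$ is a prevariety (closed under subalgebras and products), I would show $\bbS(\cat{K}) = \cat{K}$, so that the $\bbS(\cat{K})$-congruential pseudometric $\theta_\Delta$ is in fact $\cat{K}$-congruential, whence $\Free_{\cat{K}}(X, \Delta) = \Term_\Sigma X \quot \theta_\Delta$ belongs to $\cat{K}$ by definition of $\cat{K}$-congruential. The expected main difficulty across the whole lemma is assertion (3)'s converse: pinning down the exact relationship between the minimality of $\theta_\Delta$ and semantic consequence over the class $\cat{K}$, which hinges on realizing $\theta_\Delta$ as the join (pointwise infimum) of the kernels of all $\cat{K}$-homomorphisms out of $\Term_\Sigma X$ respecting $\Delta$.
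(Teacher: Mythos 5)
Your handling of (1), (2), the forward implication of (3), and (4) coincides with the paper's own proof: (1) is immediate from $\Delta \subset \theta_\Delta$; (2) is exactly the paper's argument, observing that $\ker(f^{\sharp})$ is an $\bbS(\cat{K})$-congruential pseudometric containing $\Delta$ and invoking minimality of $\theta_\Delta$ together with Proposition~\ref{first-isom-thm}; the forward direction of (3) transports a $\cat{K}$-model of $\Delta$ along the homomorphism supplied by (2); and (4) reduces to $\bbS(\cat{K}) = \cat{K}$ together with Corollary~\ref{cor:cong-complat}.

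The gap is in the converse of (3), which you yourself single out as the delicate step and leave as a plan. The characterization you propose has the lattice polarity inverted: you assert that $\theta_\Delta$ is the ``join (pointwise infimum)'' of the kernels of valuations into members of $\cat{K}$ satisfying $\Delta$. In the paper's reversed order, the \emph{smallest} element of this family is its $\cle$-meet, i.e.\ its pointwise \emph{supremum} (this is exactly what Corollary~\ref{cor:cong-complat} produces); the pointwise infimum is not $\theta_\Delta$. For instance, with $\Sigma = \emptyset$, $\Delta = \emptyset$ and $X$ a two-point set, every value in $[0, \infty]$ occurs as a kernel distance, so the pointwise infimum is $0$, whereas $\theta_\Delta$ assigns distance $\infty$; moreover a pointwise infimum of pseudometrics need not satisfy the triangle inequality at all. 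Your plan does succeed once the polarity is corrected: the $\bbS(\cat{K})$-congruential pseudometrics containing $\Delta$ are precisely the kernels $\ker(v^{\sharp})$ of valuations $v \colon X \to B$ with $B \in \cat{K}$ and $B, v \models \Delta$ (for one inclusion, embed $\Term_{\Sigma} X \quot \theta$ isometrically into a member of $\cat{K}$), so $\theta_\Delta$ is their pointwise supremum, and $\Delta \models_{\cat{K}} s =_\e t$ bounds every kernel, hence their supremum, by $\e$. Note also that the full characterization is more than is needed, and the paper's route is shorter: $\Free_{\cat{K}}(X, \Delta)$ lies in $\bbS(\cat{K})$ by the very definition of $\bbS(\cat{K})$-congruential, so it embeds isometrically into some $B \in \cat{K}$; by (1) the valuation $\eta$, pushed into $B$, satisfies $\Delta$ there, so the hypothesis $\Delta \models_{\cat{K}} s =_\e t$ applies to $B$ and yields $d(\sem{s}_\eta, \sem{t}_\eta) \le \e$. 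That is your ``$\theta_\Delta$ is itself one of the kernels,'' with no lattice computation required.

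A secondary inaccuracy: your opening claim that $\bbS(\cat{K})$ is closed under subdirect products for an \emph{arbitrary} class $\cat{K}$, ``since subdirect products are in particular subalgebras of products,'' only places such subdirect products in $\bbS\bbP(\cat{K})$, which equals $\bbS(\cat{K})$ only when $\cat{K}$ is closed under products. So your justification for the existence of $\theta_\Delta$ (a point the paper itself leaves implicit) does not work for general $\cat{K}$; it is correct in (4), where $\cat{K}$ is a prevariety.
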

\begin{proof}
  (1) It is obvious from
  $\Free_{\cat{K}} (X, \Delta) = \Term_{\Sigma} X \quot \theta_\Delta$ and $\Delta \subset \theta_\Delta$.

  (2) Let us consider $f^{\sharp} \colon \Term_{\Sigma} X \to A$.
  Since $\Delta \subset \ker(f^{\sharp})$ holds by assumption and
  $\ker(f^{\sharp})$ is $\bbS(\cat{K})$-congruential,
  we have $\theta_\Delta \cle \ker(f^{\sharp})$.
  By Proposition~\ref{first-isom-thm}, there exists a unique
  homomorphism $h \colon \Free_{\cat{K}} (X, \Delta) \to A$
  such that $h \circ \eta = f$.

  (3) (if) Assume $\Delta \models_{\cat{K}} s =_{\e} t$.
  Since $\Free_{\cat{K}} (X, \Delta) \models_{\eta} \Delta$ holds by (1),
  then we conclude $\Free_{\cat{K}} (X, \Delta) \models_{\eta} s =_{\e} t$.
  (only if)
  Let $A \in \cat{K}$ and $v \colon X \to A$ be a map with $A, v \models \Delta$.
  By (2), we have a homomorphism $h \colon \Free_{\cat{K}} (X, \Delta) \to A$
  such that $h \circ \eta = v$.
  Therefore
  $d(\sem{s}_v, \sem{t}_v) \le d(h(\sem{s}_\eta), h(\sem{t}_\eta))
  \le d(\sem{s}_\eta, \sem{t}_\eta) \le \e$.

  (4) Directly follows from Corollary~\ref{cor:cong-complat}.
\end{proof}

\subsection{Weak Compactness Theorem}
\label{sec:weak-compactness}

We do not have the full version of the compactness theorem.
There are two restrictions:
we restrict ourselves to metric equations,
and a finite subset of the assumptions is chosen only for
each perturbation of the conclusion by $\e \gt 0$.

\begin{theorem}[Weak compactness]
  \label{thm:weak-compactness}
  Let $\cat{K}$ be a continuous class of metric algebras and
  $\Delta \cup \{ s =_\e t \}$ be a set of metric equations over $X$.
  If $\Delta \models_{\cat{K}} s =_\e t$,
  then for any $\e' \gt \e$ there exists
  a finite subset $\Delta_0 \subset \Delta$ such that
  $\Delta_0 \models_{\cat{K}} s =_{\e'} t$.
\end{theorem}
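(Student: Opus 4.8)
The plan is to argue by contraposition using ultraproducts, exactly the mechanism the paper has set up for dealing with the size problem. Suppose no finite subset works: for every finite $\Delta_0 \subset \Delta$, we have $\Delta_0 \not\models_{\cat{K}} s =_{\e'} t$. This means that for each such $\Delta_0$ there is a witness $A_{\Delta_0} \in \cat{K}$ and a valuation $v_{\Delta_0} \colon X \to A_{\Delta_0}$ with $A_{\Delta_0}, v_{\Delta_0} \models \Delta_0$ but $d(\sem{s}_{v_{\Delta_0}}, \sem{t}_{v_{\Delta_0}}) > \e'$. Index these witnesses by the set $I$ of finite subsets of $\Delta$, directed upward by inclusion. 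My goal is to assemble them into a single ultraproduct in $\cat{K}$ that satisfies all of $\Delta$ yet violates $s =_\e t$, contradicting $\Delta \models_{\cat{K}} s =_\e t$.

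The key construction is to take a free ultrafilter $\calU$ on $I$ that refines the filter generated by the ``tails'' $U_{\Delta_0} = \set{\Delta_1 \in I}{\Delta_0 \subset \Delta_1}$; this is the standard trick ensuring that each fixed equation of $\Delta$ is satisfied $\calU$-eventually. I form the ultraproduct $\prod_{\Delta_0}^{\calU} A_{\Delta_0}$, which lies in $\cat{K}$ by continuity (Definition~\ref{def:continuity}), together with the pointwise valuation $\bar{v} \colon X \to \prod_{\Delta_0}^{\calU} A_{\Delta_0}$ sending $x$ to $[v_{\Delta_0}(x)]_{\Delta_0}$. For a fixed assumption $(s_j =_{\e_j} t_j) \in \Delta$, every $A_{\Delta_0}$ with $\{s_j =_{\e_j} t_j\} \subset \Delta_0$ satisfies it under $v_{\Delta_0}$, and these indices form a $\calU$-large set; since the metric of the ultraproduct is computed as a $\limsup$ along $\calU$, one gets $d(\sem{s_j}_{\bar v}, \sem{t_j}_{\bar v}) = \limsup_{\Delta_0 \to \calU} d(\sem{s_j}_{v_{\Delta_0}}, \sem{t_j}_{v_{\Delta_0}}) \le \e_j$, so the ultraproduct satisfies all of $\Delta$ under $\bar v$.

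For the contradiction I must show the conclusion fails, i.e.\ $d(\sem{s}_{\bar v}, \sem{t}_{\bar v}) > \e$. Here each witness gives $d(\sem{s}_{v_{\Delta_0}}, \sem{t}_{v_{\Delta_0}}) > \e'$ for \emph{every} $\Delta_0$, so the $\limsup$ along $\calU$ is at least $\e' > \e$, which is the required strict violation. The distance computation relies on the fact that term evaluation commutes with the canonical projection $\prod A_{\Delta_0} \to \prod^{\calU} A_{\Delta_0}$, which follows because the projection is a $\Sigma$-homomorphism, so $\sem{t}_{\bar v} = [\,\sem{t}_{v_{\Delta_0}}\,]_{\Delta_0}$ for every term $t$.

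The main obstacle, and the reason this is only a \emph{weak} compactness theorem, is the $\limsup$ (rather than $\lim$) in the ultraproduct metric and the consequent need to perturb $\e$ to $\e'$. For the assumptions we only get the upper bound $\le \e_j$ from $\limsup$, which is fine. But for the conclusion, the strict gap between $\e$ and $\e'$ is exactly what lets the $\limsup$ bound $\ge \e'$ survive as a strict inequality against the hypothesis threshold $\e$; if I tried to prove the statement with $\e' = \e$ the argument would collapse, since a $\limsup$ of values all exceeding $\e$ need not itself exceed $\e$ and the ultraproduct could satisfy $s =_\e t$ on the nose. I should double-check that $\cat{K}$ being continuous guarantees the ultraproduct \emph{exists} (the paper treats ultraproduct as a partial operation, Definition~\ref{def:reduced-prod-malg}), but this is exactly what Definition~\ref{def:continuity} packages together with membership in $\cat{K}$.
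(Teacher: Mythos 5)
Your proposal is correct and follows essentially the same route as the paper's own proof: argue by contradiction, index witnesses $(A_{\Gamma}, v_{\Gamma})$ by the finite subsets $\Gamma \subset \Delta$, take an ultrafilter $\calU$ containing all upward cones $J_{\Gamma}$, and use continuity of $\cat{K}$ to obtain an ultraproduct in $\cat{K}$ that satisfies $\Delta$ under the pointwise valuation while violating $s =_{\e} t$, since the $\limsup$ of distances all exceeding $\e'$ is at least $\e' \gt \e$. The only slip is the word \emph{free}: when $\Delta$ is finite the tail filter contains a singleton and so admits no free ultrafilter refinement, but since your argument never actually uses freeness this is harmless.
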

\begin{proof}
  We prove the theorem by contradiction.
  Suppose that there exists $\delta \gt 0$ such that, for any finite subset $\Gamma \subset \Delta$,
  we have $A_{\Gamma} \in \cat{K}$ and a map $v_{\Gamma} \colon X \to A_{\Gamma}$
  where $A_{\Gamma}, v_{\Gamma} \models \Gamma$ and
  $A_{\Gamma}, v_{\Gamma} \not\models s =_{\e'} t$ hold.

  Let $I$ be the set of finite subsets of $\Delta$.
  We define $J_{\Gamma} = \set{\Gamma' \in I}{\Gamma \subset \Gamma'}$
  for each $\Gamma \in I$
  and $\calB = \set{J_{\Gamma}}{\Gamma \in I}$.
  Since $\calB$ satisfies the finite intersection property,
  there exists an ultrafilter $\calU$ containing $\calB$.
  Let $A = \prod_{\Gamma}^{\calU} A_{\Gamma}$ be the ultraproduct of metric algebras,
  and $v \colon X \to A$ be a map defined by $v(x) = [v_i(x)]_i$.
  Then $A, v \models \Delta$ and
  $d(\sem{s}_v, \sem{t}_v)  \ge \e' \gt \e$, hence $A, v \not\models s =_{\e} t$,
  which contradicts $A \in \cat{K}$ and the assumption.
\end{proof}

\begin{corollary}
  Let $\cat{K}$ be a continuous class of metric algebras,
  $(X, \Delta)$ be a presentation and $\eta \colon X \to \Free_{\cat{K}} (X, \Delta)$
  be the unit.
  Then $d(\sem{s}_\eta, \sem{t}_\eta) \le \e$ holds if and only if
  for any $\e' \gt \e$
  there is a finite subset $\Delta_0 \subset \Delta$ such that
  $\Delta_0 \models_{\cat{K}} s =_{\e'} t$.
\end{corollary}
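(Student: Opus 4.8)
The plan is to reduce the statement to the Weak Compactness Theorem (Theorem~\ref{thm:weak-compactness}) via the presentation machinery of Lemma~\ref{lem:presentation}. First I would observe that, since $\eta$ is the unit of $\Free_{\cat{K}}(X, \Delta)$, the condition $d(\sem{s}_\eta, \sem{t}_\eta) \le \e$ is by definition exactly the statement $\Free_{\cat{K}}(X, \Delta), \eta \models s =_\e t$. By Lemma~\ref{lem:presentation}(3) this is equivalent to $\Delta \models_{\cat{K}} s =_\e t$. Thus the corollary reduces to showing that $\Delta \models_{\cat{K}} s =_\e t$ holds if and only if for every $\e' \gt \e$ there is a finite $\Delta_0 \subset \Delta$ with $\Delta_0 \models_{\cat{K}} s =_{\e'} t$.

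For the forward direction, assuming $\Delta \models_{\cat{K}} s =_\e t$, I would simply invoke Theorem~\ref{thm:weak-compactness}, which produces exactly the finite $\Delta_0 \subset \Delta$ with $\Delta_0 \models_{\cat{K}} s =_{\e'} t$ for each prescribed $\e' \gt \e$. This direction carries all the substantial content, but that content is already packaged inside the compactness theorem, so no further work is needed here.

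For the converse, I would argue directly, without any appeal to ultraproducts. Fix an arbitrary $A \in \cat{K}$ and a valuation $v \colon X \to A$ with $A, v \models \Delta$; the goal is $d(\sem{s}_v, \sem{t}_v) \le \e$. Given $\e' \gt \e$, choose the finite $\Delta_0 \subset \Delta$ supplied by the hypothesis. Since $\Delta_0 \subset \Delta$, satisfaction of $\Delta$ under $(A, v)$ entails $A, v \models \Delta_0$, and hence $A, v \models s =_{\e'} t$, i.e.\ $d(\sem{s}_v, \sem{t}_v) \le \e'$. As this bound holds for every $\e' \gt \e$, passing to the infimum over such $\e'$ yields $d(\sem{s}_v, \sem{t}_v) \le \e$. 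Since $A$ and $v$ were arbitrary, $\Delta \models_{\cat{K}} s =_\e t$, and the reduction above then gives $d(\sem{s}_\eta, \sem{t}_\eta) \le \e$, completing the proof.

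I do not expect any genuine obstacle: the heavy lifting is entirely confined to the compactness theorem, while the reverse implication is an elementary monotonicity-plus-limit argument. The only points demanding a little care are the bookkeeping that satisfaction of a subset $\Delta_0 \subset \Delta$ follows from satisfaction of $\Delta$, and the observation that taking the infimum over $\e' \gt \e$ is legitimate because $d(\sem{s}_v, \sem{t}_v)$ is a fixed real number that does not depend on the choice of $\e'$.
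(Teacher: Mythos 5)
Your proof is correct and follows the same route as the paper, whose entire proof is the one-line citation of Lemma~\ref{lem:presentation}~(3) together with Theorem~\ref{thm:weak-compactness}. You merely spell out what the paper leaves implicit: the reduction of $d(\sem{s}_\eta, \sem{t}_\eta) \le \e$ to $\Delta \models_{\cat{K}} s =_\e t$, and the elementary converse direction via monotonicity and letting $\e' \to \e$.
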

\begin{proof}
  By Lemma~\ref{lem:presentation} (3) and Theorem~\ref{thm:weak-compactness}.
\end{proof}

Using this weak version of the compact theorem,
we can show that \emph{continuous quasivarieties} in \cite{Khudyakov2003}
(simply called \emph{quasivarieties} in \cite{Weaver1995}) are
expectedly quasivarieties that are continuous in our terminology.

\begin{definition}[\cite{Weaver1995}]
  Given a class $\cat{K}$ of metric algebras and
  a metric implication $\phi \equiv \bigland_{i=1}^{n} s_i =_{\e_i} t_i \to s =_{\e} t$,
  we say \emph{$\cat{K}$ satisfies $\phi$ equicontinuously}
  if, for any $\e' \gt \e$, there exists $\delta \gt 0$ such that
  $\cat{K} \models \bigland_{i=1}^{n} s_i =_{\e_i + \delta} t_i \to s =_{\e'} t$.
\end{definition}

\begin{proposition}
  \label{cor:cont-class-equicont}
  Let $\cat{K}$ be a continuous class
  and $\phi \equiv \bigland_{i=1}^{n} s_i =_{\e_i} t_i \to s =_{\e} t$
  be a metric implication.
  If $\cat{K}$ satisfies $\phi$,
  then $\cat{K}$ satisfies $\phi$ equicontinuously.
\end{proposition}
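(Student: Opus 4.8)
The plan is to mirror the ultraproduct argument used in the proof of the weak compactness theorem (Theorem~\ref{thm:weak-compactness}), arguing by contradiction. Fix $\e' \gt \e$ and suppose that $\cat{K}$ does not satisfy $\phi$ equicontinuously for this $\e'$: for every $\delta \gt 0$ there is a witness in $\cat{K}$ violating the implication with its premises loosened by $\delta$. Specializing $\delta = 1/m$ for each $m \ge 1$, I would extract a metric algebra $A_m \in \cat{K}$ together with an assignment $v_m \colon X \to A_m$ such that $A_m, v_m \models s_i =_{\e_i + 1/m} t_i$ for all $i$, while $d(\sem{s}_{v_m}, \sem{t}_{v_m}) \gt \e'$.

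Next I would form the ultraproduct. Choosing a free ultrafilter $\calU$ on $\bbN$, let $A = \prod_m^{\calU} A_m$; this exists and lies in $\cat{K}$ because $\cat{K}$ is continuous, i.e.\ closed under ultraproducts (Definition~\ref{def:continuity}). Define $v \colon X \to A$ by $v(x) = [v_m(x)]_m$. The crucial bookkeeping step is to check that term evaluation commutes with the ultraproduct projection: since the canonical surjection $\prod_m A_m \to A$ is a homomorphism and $v$ agrees on generators with the induced map, one gets $\sem{r}_v = [\sem{r}_{v_m}]_m$ for every term $r \in \Term_\Sigma X$. Consequently, for any two terms $r, r'$ the distance in $A$ is computed as the ultralimit $d(\sem{r}_v, \sem{r'}_v) = \lim_{m \to \calU} d(\sem{r}_{v_m}, \sem{r'}_{v_m})$.

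With this in hand the contradiction is routine. For each premise, $d(\sem{s_i}_v, \sem{t_i}_v) = \lim_{m \to \calU} d(\sem{s_i}_{v_m}, \sem{t_i}_{v_m}) \le \lim_{m \to \calU}(\e_i + 1/m) = \e_i$, where the last equality uses that $\calU$ is free and $1/m \to 0$ (Proposition~\ref{lem:limit-exists}); hence $A, v \models \bigland_{i=1}^n s_i =_{\e_i} t_i$. Since $A \in \cat{K}$ and $\cat{K} \models \phi$, this forces $d(\sem{s}_v, \sem{t}_v) \le \e$. On the other hand, every $d(\sem{s}_{v_m}, \sem{t}_{v_m})$ exceeds $\e'$, so its ultralimit is at least $\e'$, giving $d(\sem{s}_v, \sem{t}_v) \ge \e' \gt \e$, a contradiction.

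The main obstacle is the passage from the loosened premises back to the exact premises in the limit: the perturbations $\e_i + 1/m$ must collapse to $\e_i$ under the ultralimit, which is precisely what freeness of $\calU$ on $\bbN$ guarantees. The only other point requiring care is the commutation of term evaluation with the ultraproduct quotient, so that distances between interpreted terms are genuinely computed as ultralimits of their componentwise distances.
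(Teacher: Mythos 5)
Your proof is correct: the contradiction argument goes through, and the two points you flag are exactly the right ones to check (term evaluation commutes with the quotient because the canonical surjection $\prod_m A_m \to \prod_m^{\calU} A_m$ is a homomorphism and evaluation in the product is componentwise; and the loosened bounds collapse, $\limsup_{m \to \calU} (\e_i + 1/m) = \e_i$, by Proposition~\ref{lem:limit-exists} since $\calU$ is free). However, your route differs from the paper's. The paper proves this proposition in two lines as a corollary of the weak compactness theorem (Theorem~\ref{thm:weak-compactness}): it sets $\Delta = \set{s_i =_{\e_i + \delta} t_i}{\delta \gt 0,\, i = 1, \ldots, n}$, notes that $\cat{K} \models \phi$ gives $\Delta \models_{\cat{K}} s =_{\e} t$, extracts a finite $\Delta_0 \subset \Delta$ with $\Delta_0 \models_{\cat{K}} s =_{\e'} t$, and takes the minimum $\delta$ occurring in $\Delta_0$. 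Your argument in effect inlines the proof of that compactness theorem, specialized to the present countable situation: where the paper's compactness proof forms an ultraproduct indexed by the finite subsets of $\Delta$ (with an ultrafilter containing the cones $J_\Gamma$), you index by $\bbN$ with a free ultrafilter, exploiting that the loosenings $\e_i + 1/m$ are cofinal among all loosenings $\e_i + \delta$. What the paper's route buys is brevity and modularity: compactness is already established and handles arbitrary infinite sets of premises. What your route buys is a self-contained argument that needs only a countable ultraproduct, and it makes explicit where freeness of the ultrafilter is used -- something the paper's reduction hides inside the compactness theorem.
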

\begin{proof}
  Let $\Delta = \set{s_i =_{\e_i + \delta} t_i}{\delta \gt 0, \, i = 1, \ldots, n}$.
  We have $\Delta \models_{\cat{K}} s =_{\e} t$ by assumption.
  Given $\e' \gt 0$, by Theorem~\ref{thm:weak-compactness},
  there exists a finite subset $\Delta_0 \subset \Delta$ such that
  $\Delta_0 \models_{\cat{K}} s_0 =_{\e'} t_0$.
  Since $\Delta_0$ is finite, we can take the minimum $\delta \gt 0$ that arises in $\Delta_0$.
  And then we have $\cat{K} \models \bigland_{i} s_i =_{\e_i + \delta} t_i \to s =_{\e'} t$.
\end{proof}

\begin{definition}[\cite{Khudyakov2003}]
  A \emph{continuous family of metric implications} is
  a set $\Phi$ of metric implications that satisfies the following conditions:
  \begin{itemize}
    \item For each $\sigma \in \Sigma$, the formula
    $\vec{x} =_0 \vec{y} \to \sigma(\vec{x}) =_0 \sigma(\vec{y})$
    belongs to $\Phi$.
    \item If $\bigwedge_{i=1}^n s_i =_{\e_i} t_i \to s =_{\e} t$ belongs to $\Phi$
      and $\e' \gt \e$, then there exists $\delta \ge 0$ such that
      $\bigwedge_{i=1}^n s_i =_{\e_i + \delta} t_i \to s =_{\e'} t$ also belongs to $\Phi$.
  \end{itemize}
\end{definition}

\begin{lemma}
  \label{lem:cont-cont}
  Let $\Phi$ be a continuous family of metric implications.
  Then $\cat{V}(\Phi)$ is closed under reduced products.
\end{lemma}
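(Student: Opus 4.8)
The plan is to discharge the two obligations packed into Definition~\ref{def:reduced-prod-malg}: given a family $(A_i)_{i \in I}$ in $\cat{V}(\Phi)$ and a filter $\calF$ on $I$, the reduced product $\prod_i^{\calF} A_i$ must first \emph{exist} (i.e.\ the pseudometric $\theta(x,y) = \limsup_{i \to \calF} d^{A_i}(x_i, y_i)$ on $\prod_i A_i$ must be congruential), and then it must \emph{satisfy} $\Phi$. The workhorse for both parts is the translation between an analytic $\limsup$-bound and a combinatorial membership statement: directly from the definition of $\limsup_{i \to \calF}$ (cf.\ Lemma~\ref{lem:usual-limit}), $\limsup_{i \to \calF} a_i \le \alpha$ holds if and only if $\set{i \in I}{a_i \le \alpha + \delta} \in \calF$ for every $\delta \gt 0$.

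For existence I would exploit the first clause of the continuous-family definition together with the second. For each $\sigma \in \Sigma_n$ the formula $\vec{x} =_0 \vec{y} \to \sigma(\vec{x}) =_0 \sigma(\vec{y})$ lies in $\Phi$, and applying the perturbation clause to it produces, for every $\e' \gt 0$, a tolerance $\delta$ with $\vec{x} =_\delta \vec{y} \to \sigma(\vec{x}) =_{\e'} \sigma(\vec{y}) \in \Phi$. Since every $A_i$ models $\Phi$ and this formula does not depend on $i$, the operations are uniformly equicontinuous, so $\theta$ is congruential by the uniform-equicontinuity criterion stated just after Definition~\ref{def:reduced-prod-malg}, and $\prod_i^{\calF} A_i$ exists. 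Concretely: if $\theta(a^k, b^k) = 0$ for all $k$, then each $\set{i}{d(a^k_i, b^k_i) \le \delta} \in \calF$, their finite intersection $J \in \calF$ forces $d(\sigma(\vec{a}_i), \sigma(\vec{b}_i)) \le \e'$ on $J$, whence $\theta(\sigma(\vec{a}), \sigma(\vec{b})) \le \e'$; letting $\e' \to 0$ gives $0$.

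For membership, fix $\phi \equiv \bigwedge_{j=1}^n s_j =_{\e_j} t_j \to s =_{\e} t$ in $\Phi$ and an assignment $v \colon X \to \prod_i^{\calF} A_i$; choosing coordinatewise representatives yields maps $v_i \colon X \to A_i$ with $\sem{u}_v = [\sem{u}_{v_i}]_i$ for every term $u$. Assume $\prod_i^{\calF} A_i, v \models \bigwedge_j s_j =_{\e_j} t_j$, that is $\limsup_{i \to \calF} d(\sem{s_j}_{v_i}, \sem{t_j}_{v_i}) \le \e_j$ for each $j$. Fix $\e' \gt \e$; continuity furnishes a $\delta \gt 0$ with $\bigwedge_j s_j =_{\e_j + \delta} t_j \to s =_{\e'} t \in \Phi$. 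The premise bounds then give $J_j := \set{i}{d(\sem{s_j}_{v_i}, \sem{t_j}_{v_i}) \le \e_j + \delta} \in \calF$, so $J := \bigcap_j J_j \in \calF$; on $J$ each $A_i$ satisfies the perturbed formula's premise under $v_i$, hence $d(\sem{s}_{v_i}, \sem{t}_{v_i}) \le \e'$. Therefore $\limsup_{i \to \calF} d(\sem{s}_{v_i}, \sem{t}_{v_i}) \le \e'$, and letting $\e' \to \e$ yields $d(\sem{s}_v, \sem{t}_v) \le \e$. Thus $\prod_i^{\calF} A_i \models \phi$ for every $\phi \in \Phi$, so $\prod_i^{\calF} A_i \models \Phi$; since it lies in $\cat{M}$ (and in $\cat{V}$ whenever $\cat{V}$ is closed under reduced products, e.g.\ $\cat{V} = \cat{M}$), it belongs to $\cat{V}(\Phi)$.

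The hard part is precisely the gap between the $\limsup$-statement $\limsup \le \e_j$ and the combinatorial statement ``the premise holds on an $\calF$-large set'': the latter fails for the exact bound $\e_j$ and is bridged only by the strictly positive slack $\delta$ extracted from the perturbation clause, which is fed through the filter characterisation and afterwards removed by letting $\e' \to \e$. Securing that positive slack is exactly what the continuity condition is designed to do, and it is the step on which the whole lemma turns. A secondary point meriting care is the uniformity in $i$ of the equicontinuity estimate, since it is this uniformity that legitimises invoking the existence criterion for the reduced product in the first part.
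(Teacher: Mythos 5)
Your satisfaction argument coincides with the paper's proof step for step: fix $\phi \in \Phi$ and $\e' \gt \e$, extract the perturbed formula with slack $\delta$ from the continuity clause, convert the $\limsup$ bounds on the premises into an $\calF$-large set $J$ on which the perturbed premises hold coordinatewise, apply the perturbed formula on $J$, and let $\e' \to \e$. Where you go beyond the paper is the existence obligation. The paper's definition of closure under reduced products explicitly requires that the reduced product \emph{exist} (``Note that we require the existence''), i.e.\ that the $\limsup$ pseudometric be congruential, yet its proof of Lemma~\ref{lem:cont-cont} begins with ``we show $A \models \phi$ where $A = \prod_{i}^{\calF} A_i$'' and never checks this. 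Your argument --- the formulas $\vec{x} =_0 \vec{y} \to \sigma(\vec{x}) =_0 \sigma(\vec{y})$ from the first clause, perturbed via the second clause, yield equicontinuity bounds uniform in $i$, hence congruentiality by the criterion following Definition~\ref{def:reduced-prod-malg} or by your direct computation --- correctly fills this hole. Your parenthetical remark that membership in $\cat{V}(\Phi)$ for general $\cat{V}$ additionally needs $\cat{V}$ itself closed under reduced products (automatic for $\cat{V} = \cat{M}$, which is what the paper actually proves) is likewise a fair reading of a statement the paper leaves imprecise.

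One caveat you share with the paper rather than diverge from: both proofs take the $\delta$ supplied by the perturbation clause to be \emph{strictly positive}, while the printed definition only guarantees $\delta \ge 0$, and the step from $\limsup_{i \to \calF} a_i \le \e_j$ to $\set{i \in I}{a_i \le \e_j + \delta} \in \calF$ fails when $\delta = 0$. This is not pedantry: under the literal ``$\delta \ge 0$'' reading the lemma is false. For $\Sigma = \emptyset$, the family $\Phi = \set{x =_1 y \to x =_{\e} y}{\e \gt 0}$ satisfies the printed definition (always with $\delta = 0$); its models are exactly the metric spaces in which any two points at distance at most $1$ are equal; the two-point spaces $A_n$ with $d(p, q) = 1 + 1/n$ are models; but their reduced product over the cofinite filter on $\bbN$ contains two distinct points at distance exactly $1$, so it is not a model. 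Hence ``$\delta \ge 0$'' must be read as ``$\delta \gt 0$''; the paper's proof does so silently, and your proof inherits the same reading, so this is a defect of the paper's definition rather than a gap in your argument relative to the paper's.
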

\begin{proof}
  Let $\calF$ be a filter on $I$
  and $(A_i, d_i)_{i \in I}$ be a family of metric algebras with $A_i \models \Phi$.
  Let $\phi \equiv \bigland_{k=1}^{n} s_k =_{\e_k} t_k \to s =_{\e} t$ be
  a metric implication over $X$ that belongs to $\Phi$, and we show $A \models \phi$
  where $A = \prod_{i}^{\calF} A_i$.

  Let $(v_i \colon X \to A_i)_i$ be a family of maps, and $v \colon X \to A$
  be a map defined by $v(x) = [v_i(x)]_i$.
  Assume $A, v \models s_k =_{\e_k} t_k$ for each $k = 1, \ldots, n$,
  and let us fix $\e' \gt \e$. Since $\Phi$ is a continuous family of metric implications,
  there exists $\delta \gt 0$ such that
  $\phi' \equiv \bigland_{k=1}^{n} s_k =_{\e_k + \delta} t_k \to s =_{\e'} t$ belongs to $\Phi$.
  By the definition of reduced product,
  there exists $J \in \calF$ such that,
  for each $k = 1, \ldots, n$, we have $\sup_{i \in J} d_i(\sem{s_k}_{v_i}, \sem{t_k}_{v_i}) \le \e_k + \delta$.
  Since $A_i, v_i \models \phi'$, we also have
  $\limsup_{i \to \calF} d_i(\sem{s}_{v_i}, \sem{t}_{v_i}) \le \sup_{i \in J} d_i(\sem{s}_{v_i}, \sem{t}_{v_i}) \le \e'$.
  Letting $\e' \to \e$, we conclude $A, v \models s =_\e t$.
\end{proof}

\begin{proposition}
  \label{equiv-conti-quasi}
  Let $\cat{K}$ be a quasivariety.
  The followings are equivalent:
  \begin{enumerate}
    \item $\cat{K}$ is a continuous quasivariety.
    \item The set $\Phi_{\cat{K}}$ of metric implications that holds in $\cat{K}$ is continuous.
    \item $\cat{K}$ is defined by a continuous family of metric implications.
  \end{enumerate}
\end{proposition}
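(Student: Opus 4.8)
The plan is to prove the cycle $(1) \Rightarrow (2) \Rightarrow (3) \Rightarrow (1)$. Throughout I read condition (1) as asserting that the quasivariety $\cat{K}$, already a quasivariety by hypothesis, is \emph{continuous} in the sense of Definition~\ref{def:continuity}, i.e.\ closed under ultraproducts. A preliminary observation I would record first is that, since $\cat{K}$ is a quasivariety, it is cut out by some set $\Psi$ of metric implications, so $\cat{K} = \cat{M}(\Psi)$. As every implication in $\Psi$ holds throughout $\cat{K}$ we have $\Psi \subset \Phi_{\cat{K}}$ and hence $\cat{M}(\Psi) \supseteq \cat{M}(\Phi_{\cat{K}})$, while the reverse inclusion is immediate from the definition of $\Phi_{\cat{K}}$ (every member of $\cat{K}$ satisfies every implication holding in $\cat{K}$). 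Thus $\cat{K} = \cat{M}(\Phi_{\cat{K}})$, which is what lets conditions phrased in terms of $\Phi_{\cat{K}}$ speak about $\cat{K}$ itself.

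For $(1) \Rightarrow (2)$ I would verify the two clauses in the definition of a continuous family directly for $\Phi_{\cat{K}}$. The first clause is automatic: in any metric space $=_0$ is genuine equality and each $\sigma$ is a function, so $\vec{x} =_0 \vec{y} \to \sigma(\vec{x}) =_0 \sigma(\vec{y})$ holds in every metric algebra and therefore lies in $\Phi_{\cat{K}}$. The second clause is \emph{precisely} equicontinuous satisfaction: given $\phi \equiv \bigland_{i} s_i =_{\e_i} t_i \to s =_\e t$ in $\Phi_{\cat{K}}$ (equivalently $\cat{K} \models \phi$) and $\e' \gt \e$, the continuity of $\cat{K}$ lets me invoke Proposition~\ref{cor:cont-class-equicont} to obtain $\delta \gt 0$ with $\cat{K} \models \bigland_i s_i =_{\e_i + \delta} t_i \to s =_{\e'} t$; that is, the perturbed implication belongs to $\Phi_{\cat{K}}$, and $\delta \gt 0$ certainly satisfies the weaker $\delta \ge 0$ demanded in the definition.

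For $(2) \Rightarrow (3)$ I simply take $\Phi = \Phi_{\cat{K}}$: by (2) it is a continuous family, and by the preliminary observation $\cat{K} = \cat{M}(\Phi_{\cat{K}})$, so $\cat{K}$ is defined by a continuous family. For $(3) \Rightarrow (1)$, if $\cat{K} = \cat{M}(\Phi)$ with $\Phi$ a continuous family, then Lemma~\ref{lem:cont-cont} shows $\cat{K}$ is closed under reduced products, hence in particular under ultraproducts (the special case in which the filter is an ultrafilter), so $\cat{K}$ is continuous in the sense of Definition~\ref{def:continuity}; together with the standing hypothesis that $\cat{K}$ is a quasivariety, this yields (1). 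The only genuinely substantive input is Proposition~\ref{cor:cont-class-equicont}, whose proof rests on the weak compactness theorem; once that is in hand each step here is routine, and the main thing to get right is the bookkeeping identification $\cat{K} = \cat{M}(\Phi_{\cat{K}})$ together with the recognition that the second clause of the continuous-family definition is exactly equicontinuous satisfaction, so that Proposition~\ref{cor:cont-class-equicont} applies verbatim.
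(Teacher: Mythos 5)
Your proof is correct and follows essentially the same route as the paper's: $(1)\Rightarrow(2)$ via Proposition~\ref{cor:cont-class-equicont}, $(2)\Rightarrow(3)$ via the identification $\cat{K}=\cat{M}(\Phi_{\cat{K}})$, and $(3)\Rightarrow(1)$ via Lemma~\ref{lem:cont-cont}. The details you supply beyond the paper's three-line proof (that the $=_0$-preservation formulas hold in every metric algebra, and the bookkeeping argument that a quasivariety equals $\cat{M}(\Phi_{\cat{K}})$) are exactly what the paper leaves implicit, and they are handled correctly.
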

\begin{proof}
$(1 \Rightarrow 2)$ It follows from Proposition~\ref{cor:cont-class-equicont}.

$(2 \Rightarrow 3)$ $\cat{K}$ is defined by $\Phi_{\cat{K}}$.

$(3 \Rightarrow 1)$ Directly follows from Lemma~\ref{lem:cont-cont}.
\end{proof}

%

\subsection{Generalized Metric Inequality}
\label{sec:met-ineq}

In Subsection~\ref{sec:weak-compactness}, we saw the ultraproduct construction
preserves properties described by a continuous family of metric implications.
We will see that some richer properties are preserved by ultraproducts.

\begin{definition}
  A \emph{(generalized) metric inequality over a set $X$} is a tuple $(f; \vec{s}, \vec{t})$ of
  a continuous function $f \colon (\bbRp)^n \to \bbRp$ and
  terms $\vec{s} = (s_1, \ldots, s_n)$, $\vec{t} = (t_1, \ldots, t_n)$ over $X$,
  denoted by $f(d(s_1, t_1), \ldots, d(s_n, t_n)) \ge 0$.

  Given a metric algebra $A$ and a map $v \colon X \to A$,
  \emph{the metric inequality $f(d(s_1, t_1), \ldots, d(s_n, t_n)) \ge 0$ holds under $v$},
  if the following condition holds.
  \[
    f(d(\sem{s_1}_v, \sem{t_1}_v), \ldots, d(\sem{s_n}_v, \sem{t_n}_v)) \ge 0 \quad.
  \]

  Other expressions such as
  $f(d(s_1, t_1), d(s_2, t_2)) \le g(d(s'_1, t'_1), d(s'_2, t'_2))$
  and $f(d(s_1, t_1), d(s_2, t_2)) = 0$ are defined and interpreted naturally.
\end{definition}

\begin{theorem}
  \label{th:ultraprod-met-ineq}
  Let $\phi \equiv f(d(s_1, t_1), \ldots, d(s_n, t_n)) \ge 0$ be a metric inequality,
  $\calU$ be an ultrafilter on $I$, and $(A_i)$ be a family of metric algebras.
  If $A_i$ satisfies $\phi$ for any $i \in I$,
  the ultraproduct $\prod_{i}^{\calU} A_i$ also satisfies $\phi$.
\end{theorem}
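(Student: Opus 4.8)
The plan is to verify satisfaction directly from the definition. I take an arbitrary valuation $v \colon X \to A$ into the ultraproduct $A = \prod_i^{\calU} A_i$ and aim to show $f(d(\sem{s_1}_v, \sem{t_1}_v), \ldots, d(\sem{s_n}_v, \sem{t_n}_v)) \ge 0$. The first step is to \emph{lift} $v$ to a family of component valuations: for each $x \in X$ I choose a representative of $v(x) \in (\prod_i A_i) \quot \theta$, obtaining maps $v_i \colon X \to A_i$ with $v(x) = [v_i(x)]_i$. Writing $\pi \colon \prod_i A_i \epi A$ for the canonical projection, which is a homomorphism, and $\tilde v \colon X \to \prod_i A_i$ for $\tilde v(x) = (v_i(x))_i$, we have $v = \pi \circ \tilde v$. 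Since term interpretation in a product is computed pointwise and $\pi$ is homomorphic, this yields $\sem{s}_v = [\sem{s}_{v_i}]_i$ for every term $s$.

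With this in hand, I rewrite each distance occurring in $\phi$. By the definition of the ultraproduct pseudometric $\theta$ and the fact that $\calU$ is an ultrafilter (so that $\limsup$ and $\liminf$ coincide), putting $x^k_i = d_i(\sem{s_k}_{v_i}, \sem{t_k}_{v_i})$ we obtain
\[
  d(\sem{s_k}_v, \sem{t_k}_v) = \limsup_{i \to \calU} x^k_i = \lim_{i \to \calU} x^k_i =: \alpha^k \quad.
\]
The hypothesis that every $A_i$ satisfies $\phi$ gives $f(x^1_i, \ldots, x^n_i) \ge 0$ for all $i \in I$. Assuming each $\alpha^k$ is finite, Proposition~\ref{cont-limit} applies to the continuous map $f$ and yields $\lim_{i \to \calU} f(x^1_i, \ldots, x^n_i) = f(\alpha^1, \ldots, \alpha^n)$; since the sequence inside the limit is everywhere non-negative, the limit is non-negative, so $f(\alpha^1, \ldots, \alpha^n) \ge 0$, which is exactly $A, v \models \phi$.

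The step I expect to require the most care is the treatment of \emph{infinite distances}. Because we work with extended metrics, some $x^k_i$ (and hence some $\alpha^k$) may equal $+\infty$, yet $f$ is declared only on $(\bbRp)^n$ and Proposition~\ref{cont-limit} presupposes finite limits. This forces one to fix a convention for the truth value of $\phi$ when an argument is infinite (for instance, by extending $f$ continuously to $[0,\infty]^n$, or by decreeing $\phi$ to hold vacuously at such valuations) and to check that both the hypothesis and the ultraproduct computation respect it. Everything else—the lifting of $v$, the pointwise evaluation of term interpretations, and the passage from $\limsup$ to $\lim$—is a routine consequence of the results already established.
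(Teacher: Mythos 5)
Your proposal is correct and follows essentially the same route as the paper's own proof: lift the valuation to component valuations, identify each distance in the ultraproduct with the ultrafilter limit $\lim_{i \to \calU} d_i(\sem{s_k}_{v_i}, \sem{t_k}_{v_i})$, and apply Proposition~\ref{cont-limit} together with non-negativity of each $f(x^1_i, \ldots, x^n_i)$. The two points where you are more careful than the paper --- making the lifting $v \mapsto (v_i)_i$ and the identity $\sem{s}_v = [\sem{s}_{v_i}]_i$ explicit, and flagging that extended (infinite) distances fall outside the domain $(\bbRp)^n$ of $f$ --- are genuine refinements; the paper's proof silently assumes finiteness, so your observation that a convention is needed there is a legitimate caveat about the statement itself, not a defect of your argument.
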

\begin{proof}
  Let $(v_i \colon X \to A_i)_{i \in I}$ be a family of maps,
  and $v \colon X \to \prod_{i}^{\calU} A_i$ be a map defined by $v(x) = [v(x)]_i$.
  Let us define $x_k^i = d(\sem{s_k}_{v_i}, \sem{t_k}_{v_i})$
  and $\gamma_k = d(\sem{s_k}_v, \sem{t_k}_v)$,
  and assume $A_i, v \models \phi$ for each $i \in I$.
  By Proposition~\ref{cont-limit} and $\gamma_k = \lim_{i \to \calU} x_k^i$,
  we have
  $f(\gamma_1, \ldots, \gamma_n) = \lim_{i \to \calU} f(x_1^i, \ldots, x_n^i) \ge 0$.
\end{proof}

An immediate application is on the class of inner product spaces.
An inner product space is equipped with the norm determined by its inner product.
A classical result of functional analysis states that a norm that satisfies a certain equation
comes from an inner product. Then we can apply Theorem~\ref{th:ultraprod-met-ineq}
and prove that the class of inner product spaces is closed under ultraproducts.
See \cite{Heinrich1980, Li2005} for more examples from functional analysis and operator algebra.

\begin{example}
  For the signature of normed vector space,
  \[
    \norm{x + y}^2 + \norm{x - y}^2 = 2 (\norm{x}^2 + \norm{y}^2)
  \]
  is a metric inequality, where $\norm{z}$ is a shorthand for $d(z, 0)$.
  This metric inequality characterizes the class of inner product spaces \cite{JordanN1935}.
\end{example}

\begin{corollary}[\cite{Li2005}]
  Ultraproducts of inner product spaces are inner product spaces.
  Moreover ultraproducts of Hilbert spaces are Hilbert spaces.
\end{corollary}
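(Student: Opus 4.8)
The plan is to realize the class of inner product spaces as the class of normed vector spaces cut out by a single metric inequality, and then invoke Theorem~\ref{th:ultraprod-met-ineq}. By the Jordan--von Neumann theorem \cite{JordanN1935}, a normed vector space is an inner product space precisely when its norm obeys the parallelogram law, and as recorded in the preceding example this law
\[
  \norm{x+y}^2 + \norm{x-y}^2 = 2\bigl(\norm{x}^2 + \norm{y}^2\bigr)
\]
is a (generalized) metric inequality over the signature of normed vector spaces, with $\norm{z}$ abbreviating $d(z, 0)$. Thus, once we know that an ultraproduct of normed vector spaces is again a normed vector space, Theorem~\ref{th:ultraprod-met-ineq} applied to the parallelogram law immediately yields that an ultraproduct of inner product spaces satisfies the parallelogram law, hence is an inner product space.

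First I would discharge the preliminary claim that the class of normed vector spaces is closed under ultraproducts. Existence of the ultraproduct is already guaranteed by the corollary to the equicontinuity criterion, so it remains to see that the resulting metric algebra is a normed vector space. The vector-space axioms are equations $s = t$, which we may read as the metric inequalities $-\,d(s, t) \ge 0$ using the continuous function $a \mapsto -a$; the compatibility of the metric with the linear structure---translation invariance $d(x+z, y+z) = d(x, y)$ and homogeneity $d(\lambda x, \lambda y) = \abs{\lambda}\, d(x, y)$ for each scalar $\lambda$---are likewise metric (in)equalities of the admissible form. Each is preserved by Theorem~\ref{th:ultraprod-met-ineq}, and definiteness of the norm holds automatically because the ultraproduct is formed by metric identification. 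I expect this bookkeeping to be the main (though essentially routine) obstacle: one must check that every defining clause of ``normed vector space'' is genuinely of the shape $f(d(s_1, t_1), \ldots, d(s_n, t_n)) \ge 0$ for a continuous $f \colon (\bbRp)^n \to \bbRp$, and keep an eye on the extended-metric subtlety that points at infinite distance may in principle appear in the ultraproduct.

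With the preliminary claim in hand, the first assertion follows as sketched above. For the second assertion I would simply layer on completeness: a Hilbert space is a complete inner product space, and by Proposition~\ref{thm:uprod-comp-comp} an ultraproduct of complete metric spaces is again complete. Hence an ultraproduct of Hilbert spaces is both an inner product space, by the first part, and complete, i.e.\ a Hilbert space, which finishes the proof.
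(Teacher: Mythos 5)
Your proposal is correct and follows essentially the same route as the paper: the paper's proof consists precisely of citing Theorem~\ref{th:ultraprod-met-ineq} (applied to the parallelogram law, recorded as a metric inequality in the example immediately preceding the corollary) together with Proposition~\ref{thm:uprod-comp-comp} for completeness. The only difference is that you explicitly carry out the bookkeeping that ultraproducts of normed vector spaces exist and remain normed vector spaces, which the paper leaves implicit (it is covered by its earlier corollary on existence of reduced products of normed spaces and the quasivariety fact cited in Example~\ref{eg:nvs-not-0variety}).
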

\begin{proof}
  By Theorem~\ref{th:ultraprod-met-ineq} and Proposition~\ref{thm:uprod-comp-comp}.
\end{proof}

\section{Variety Theorem}
\label{chp:variety}

Now we prove the variety theorems of metric algebras.

\subsection{Basic Closure Properties}

We know the following closure properties of
classes defined by a certain formula.

\begin{proposition}[\cite{Weaver1995, Mardare2017}]
  \label{basic-closure}
  Let $\phi$ be a metric implication.
  \begin{enumerate}
    \item The class $\cat{M}(\phi)$ is closed under subalgebras, products.
    \item If $\phi$ is a quantitative basic inference, then $\cat{M}(\phi)$ is
      closed under $\omega$-reflexive quotients, and then closed under reflexive quotients.
    \item If $\phi$ is a metric equation, then $\cat{M}(\phi)$ is
      closed under quotients.
  \end{enumerate}
\end{proposition}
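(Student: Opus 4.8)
The plan is to prove the three closure properties separately, in each case by tracking how a metric equation $s =_\e t$ or implication is interpreted under the relevant construction. The unifying idea is that satisfaction $A, v \models s =_\e t$ is the condition $d(\sem{s}_v, \sem{t}_v) \le \e$, so I only need to understand how the terms $\sem{s}_v, \sem{t}_v$ and the distance $d$ behave under subalgebras, products, and the various quotients.

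\begin{proof}[Proof plan]
For (1), closure under subalgebras is immediate: if $B \mono A$ is an embedding (isometric and homomorphic) and $v \colon X \to B$ is a valuation, then $\sem{s}_v$ and $\sem{t}_v$ are computed the same way in $B$ as in $A$, and the metric on $B$ is the restriction of that on $A$, so $A \models \phi$ forces $B \models \phi$. For products, I would use the relational characterization of the supremum metric recorded in Section~\ref{sec:metric-sp}: in $\prod_i A_i$ we have $(x_i)_i =_\e (y_i)_i \iff x_i =_\e y_i$ for all $i$. Given a valuation $v$ into the product, its components $v_i$ are valuations into each $A_i$, and term interpretation is pointwise, so $\sem{s}_v = (\sem{s}_{v_i})_i$. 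Checking an implication then reduces, coordinatewise, to the hypotheses: if each assumption $s_k =_{\e_k} t_k$ holds in the product under $v$, it holds in every $A_i$ under $v_i$, whence the conclusion $s =_\e t$ holds in each $A_i$, and therefore in the product by the supremum characterization.

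For (3), suppose $\phi$ is a metric equation $s =_\e t$ (the case $n = 0$), and let $\pi \colon A \epi B$ be a surjective homomorphism with $A \models \phi$. Given any valuation $w \colon X \to B$, surjectivity lets me lift it to $v \colon X \to A$ with $\pi \circ v = w$; since $\pi$ is a $\Sigma$-homomorphism we get $\pi(\sem{s}_v) = \sem{s}_w$ and likewise for $t$. Then non-expansiveness of $\pi$ gives $d(\sem{s}_w, \sem{t}_w) = d(\pi\sem{s}_v, \pi\sem{t}_v) \le d(\sem{s}_v, \sem{t}_v) \le \e$, so $B \models s =_\e t$. The crucial point is that an unconditional equation has no hypotheses to verify, so the single inequality contracted by the non-expansive $\pi$ suffices.

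Part (2) is where the difficulty lies, and I expect it to be the main obstacle, because for a genuine basic quantitative inference the antecedents must be \emph{reflected} back up the quotient, not merely pushed down. Here I would exploit the defining feature of a reflexive (or $\omega$-reflexive) quotient from Definition~\ref{def:refl-homom}: there is a section $\iota \colon B \to A$ that is an isometric embedding of metric spaces with $\pi \circ \iota = \id_B$, although $\iota$ need not be homomorphic. The plan is, given a valuation $w \colon X \to B$ satisfying the hypotheses $x_k =_{\e_k} y_k$ (which are equations \emph{between variables}), to lift along $\iota$ by setting $v = \iota \circ w$. Because $\iota$ is an isometry, each hypothesis $d(w(x_k), w(y_k)) \le \e_k$ transfers exactly to $d(v(x_k), v(y_k)) \le \e_k$ in $A$; this is precisely why the restriction to variables matters, since $\iota$ preserves distances between its values but not the interpretation of compound terms. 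Applying $A \models \phi$ to $v$ then yields $d(\sem{s}_v, \sem{t}_v) \le \e$ in $A$, and finally the non-expansive homomorphism $\pi$ carries this down to $d(\sem{s}_w, \sem{t}_w) \le \e$ in $B$, using $\pi \circ v = \pi \circ \iota \circ w = w$ as in part (3). The $\omega$-reflexive case only differs in that the section need exist on each countable (here finite, since there are finitely many variables) subset, which is enough to accommodate the finite variable set of $\phi$; that every reflexive quotient is in particular handled, and that $\omega$-reflexivity already suffices for finitary $\phi$, follows since the variables appearing in $\phi$ form a finite set.
\end{proof}
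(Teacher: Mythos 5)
Your proof is correct, and in fact the paper offers no proof of this proposition at all---it is stated as a known result cited from Weaver and from Mardare et al., so your argument fills in exactly the standard reasoning those sources use. In particular, you correctly isolate the one genuinely delicate point: the section $\iota$ of a reflexive quotient is only a metric embedding, not a homomorphism, so it can transfer the antecedents (distances between \emph{variable} values) but not compound terms, which is precisely why part (2) requires basic quantitative inferences while part (3) handles arbitrary quotients only for unconditional equations.
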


Our goal is to prove the converse of this result: if a class of metric algebras
is closed under some constructions, it is defined by
a certain class of formulas.

\subsection{Strict Variety Theorem}

First we give a very simple version of metric variety theorems.
As we will see, this formulation is very naive, which excludes
normed vector spaces as example.

\begin{definition}
  \label{def:strict-variety}
  A class of metric algebras is called a \emph{strict variety}
  (also called a \emph{1-variety} in \cite{Mardare2017})
  if it is defined by a set of metric equations.
\end{definition}

The proof of the strict variety theorem is
almost analogous to the classical case in \cite{Burris1981};
we use congruential pseudometrics instead of congruences.

\begin{theorem}[\cite{Hino2016}]
  \label{hsp-theorem}
  A class $\cat{K}$ of metric algebras is a strict variety
  if and only if $\cat{K}$ is closed under products, subalgebras and quotients.
\end{theorem}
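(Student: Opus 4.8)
The forward direction is immediate from the closure properties already recorded: if $\cat{K} = \cat{M}(E)$ for a set $E$ of metric equations, then $\cat{K} = \bigcap_{\phi \in E} \cat{M}(\phi)$, and each $\cat{M}(\phi)$ is closed under subalgebras, products and quotients by Proposition~\ref{basic-closure}(1) and~(3); an intersection of classes with these closure properties inherits them. So the content lies in the converse. Assume $\cat{K}$ is closed under $\bbH$, $\bbS$ and $\bbP$, and let $E$ be the set of all metric equations valid in $\cat{K}$ (a genuine set rather than a proper class, since each term mentions only finitely many variables, so it suffices to range over a fixed countably infinite variable set and rename). Trivially $\cat{K} \subset \cat{M}(E)$, and the whole problem reduces to proving $\cat{M}(E) \subset \cat{K}$.

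Fix $A \in \cat{M}(E)$ and take the underlying set $X = A$ as a set of variables. Since $\cat{K}$ is closed under $\bbS$ and $\bbP$ it is a prevariety, so by Lemma~\ref{lem:presentation}(4) the free algebra $\Free_{\cat{K}} X = \Term_{\Sigma} X \quot \theta_{\cat{K}}$ exists and lies in $\cat{K}$, where $\theta_{\cat{K}}$ is the smallest $\cat{K}$-congruential pseudometric on $\Term_{\Sigma} X$ (its existence follows from Corollary~\ref{cor:cong-complat}). The key lemma I would establish first is the characterization
\[
  \cat{K} \models s =_{\e} t \quad\Longleftrightarrow\quad \theta_{\cat{K}}(s, t) \le \e \quad .
\]
The direction $(\Leftarrow)$ holds because any $B \in \cat{K}$ with a valuation $w \colon X \to B$ extends, by the universal property of Lemma~\ref{lem:presentation}(2), to a non-expansive homomorphism $h \colon \Free_{\cat{K}} X \to B$, so $d^B(\sem{s}_w, \sem{t}_w) = d^B(h[s], h[t]) \le \theta_{\cat{K}}(s, t) \le \e$. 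The direction $(\Rightarrow)$ is the instance $B = \Free_{\cat{K}} X$, $w = \eta$ of the hypothesis, using Lemma~\ref{lem:presentation}(1).

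Now apply this to $A$. Let $v = \id \colon X \to A$, so that $v^{\sharp} \colon \Term_{\Sigma} X \to A$ is a surjective homomorphism, the variables already exhausting $A$. For any terms $s, t$ the equation $s =_{\theta_{\cat{K}}(s,t)} t$ is valid in $\cat{K}$ by the characterization, hence belongs to $E$ and is satisfied by $A$; evaluating under $v$ gives $d^A(\sem{s}_v, \sem{t}_v) \le \theta_{\cat{K}}(s, t)$, that is, $\ker(v^{\sharp})(s,t) \le \theta_{\cat{K}}(s,t)$, which in the reversed order reads $\theta_{\cat{K}} \cle \ker(v^{\sharp})$. By the First Isomorphism Theorem (Proposition~\ref{first-isom-thm}), $v^{\sharp}$ factors through $\theta_{\cat{K}}$ as a surjective homomorphism $\bar{v} \colon \Free_{\cat{K}} X \onto A$. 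Thus $A$ is a quotient of $\Free_{\cat{K}} X \in \cat{K}$, and closure under $\bbH$ yields $A \in \cat{K}$.

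The step I expect to be the main obstacle is the characterization lemma together with the observation that it can be used non-circularly. The naive Birkhoff argument would like to say that $\ker(v^{\sharp})$ is itself $\cat{K}$-congruential, but that presupposes $A \in \cat{K}$, which is precisely what we are trying to prove. The device that breaks the circularity is to push the purely equational hypothesis $A \models E$ forward along the single evaluation $v = \id$ into a metric inequality $\theta_{\cat{K}} \cle \ker(v^{\sharp})$ between pseudometrics on the free term algebra; this is already enough to factor the surjection through $\Free_{\cat{K}} X$ without ever asserting membership of an intermediate object in $\cat{K}$. Care is also needed with the size bookkeeping — ensuring that $E$ is a set and that $\theta_{\cat{K}}$ is well-defined as the $\cle$-least $\cat{K}$-congruential pseudometric — but these are handled by the finitariness of terms and by Corollary~\ref{cor:cong-complat} respectively.
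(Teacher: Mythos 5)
Your proof is correct and follows essentially the same route as the paper's: both take $E$ to be the set of all metric equations valid in $\cat{K}$, form $\Free_{\cat{K}} X$ over the underlying set of a given $A \in \cat{M}(E)$, establish $\theta_{\cat{K}} \cle \ker(v^{\sharp})$ via the characterization of distances in the free algebra (your key lemma is exactly Lemma~\ref{lem:presentation}(3) with $\Delta = \emptyset$, which the paper cites rather than reproves), and conclude by the First Isomorphism Theorem (Proposition~\ref{first-isom-thm}) and closure under quotients. The differences are only presentational, namely your explicit size bookkeeping and the inline re-derivation of that lemma.
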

\begin{proof}
  (only if) Directly follows from Proposition~\ref{basic-closure}.

  (if)
  Let $E$ be the set of metric equations that hold in $\cat{K}$.
  Since $\cat{K} \subset \cat{M}(E)$ is trivial, we only have to
  show $\cat{M}(E) \subset \cat{K}$.
  Let $A$ be a metric algebra that satisfies $E$,
  and $X$ be its underlying set.
  Let $f \colon \Term_{\Sigma} X \to A$ be the homomorphic extension of the identity map
  and $\pi \colon \Term_{\Sigma} X \to \Free_{\cat{K}} X$ be the canonical projection.
  By Lemma~\ref{lem:presentation} we have $\Free_{\cat{K}} X \in \cat{K}$.
  Since $f$ and $\pi$ are surjective, by Proposition~\ref{first-isom-thm}, it suffices to show that
  $d(f(s), f(t)) \le d(\pi(s), \pi(t))$ for any $s, t \in \Term_\Sigma X$.

  Assume $d(\pi(s), \pi(t)) \le \e$, that is, $\Free_{\cat{K}} X, \pi \models s =_\e t$.
  By Lemma~\ref{lem:presentation} (3), we have $\cat{K} \models s =_\e t$.
  Since $A$ satisfies all metric equations that hold in $\cat{K}$,
  we have $A \models s =_\e t$ and especially $d(f(s), f(t)) \le \e$,
  which concludes the proof.
\end{proof}

Applying Theorem~\ref{hsp-theorem},
we can prove that the class of normed vector spaces is not a variety of metric algebras
for the signature of vector space.

\begin{example}
  \label{eg:nvs-not-0variety}
  For the signature $\Sigma = \{{+}, 0, (\lambda {\cdot}) \}_{\lambda \in \bbR}$,
  the class $\cat{N}$ of normed vector spaces is a \emph{quasivariety}
  of metric algebras \cite{Weaver1995, Khudyakov2003},
  but it is not a strict variety. Indeed consider $\bbR \in \cat{N}$ and
  let $R'$ be a metric algebra that has the same algebraic structure as $\bbR$
  but whose metric is defined by $d(x, y) = \abs{\tanh(y) - \tanh(x)}$.
  The identity map $f \colon \bbR \to R'$ is a quotient
  while $R' \not\in \cat{N}$. Therefore $\cat{N}$ is not closed under quotient,
  hence not a strict variety.
\end{example}

The class of normed vector space is a prototypical example of classes of metric algebras,
but Example~\ref{eg:nvs-not-0variety} showed that it cannot be expressed by metric equations.
To deal with such classes, we need to use more expressive formulas.

\newpara

We can extend the strict variety theorem to the quantitative case.
For a generality, we give the notion of \emph{variety relative to $\cat{V}$}
(see \cite{Gorbunov1998} for the classical case)
and deal with the quantitative case as its particular case.

\begin{definition}
  \label{def:strict-variety-rel}
  A class of metric algebras is a \emph{strict variety relative to $\cat{V}$}
  if it is defined in $\cat{V}$ by a set of metric equations.
\end{definition}

\begin{theorem}
  \label{hsp-theorem-rel}
  Let $\cat{V}$ be a prevariety.
  A class $\cat{K} \subset \cat{V}$ of metric algebras is a strict variety
  relative to $\cat{V}$
  if and only if $\cat{K}$ is closed under products, subalgebras and $\cat{V}$-quotients.
\end{theorem}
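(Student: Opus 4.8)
The plan is to mirror the absolute strict variety theorem (Theorem~\ref{hsp-theorem}) step by step, adapting each closure statement to the relative setting by intersecting with $\cat{V}$ and by replacing ordinary quotients with $\cat{V}$-quotients. Observe first that $\cat{K} = \cat{V}(E) = \cat{V} \cap \cat{M}(E)$ whenever $\cat{K}$ is a strict variety relative to $\cat{V}$ via a set $E$ of metric equations, so the two directions become a matter of combining the closure properties of $\cat{M}(E)$ from Proposition~\ref{basic-closure} with those of the prevariety $\cat{V}$.

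For the (only if) direction, suppose $\cat{K} = \cat{V}(E)$. By Proposition~\ref{basic-closure} part (1), each $\cat{M}(\phi)$ for $\phi \in E$ is closed under products and subalgebras, hence so is their intersection $\cat{M}(E)$; since $\cat{V}$ is a prevariety it too is closed under products and subalgebras, and therefore so is $\cat{K} = \cat{V} \cap \cat{M}(E)$. For $\cat{V}$-quotients I would argue: if $A \in \cat{K}$ and $(B, \pi)$ is a quotient of $A$ with $B \in \cat{V}$, then $B \in \cat{M}(E)$ by Proposition~\ref{basic-closure} part (3) (closure of $\cat{M}(E)$ under quotients, valid because $E$ consists of metric equations), and combining $B \in \cat{M}(E)$ with $B \in \cat{V}$ gives $B \in \cat{V}(E) = \cat{K}$.

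For the (if) direction, let $E$ be the set of all metric equations that hold in $\cat{K}$. Then $\cat{K} \subseteq \cat{V}(E)$ holds trivially (we are given $\cat{K} \subseteq \cat{V}$, and $\cat{K} \models E$ by the choice of $E$), so the task reduces to proving $\cat{V}(E) \subseteq \cat{K}$. Fix $A \in \cat{V}(E)$ and let $X$ be its underlying set. Since $\cat{K}$ is closed under subalgebras and products it is a prevariety, so Lemma~\ref{lem:presentation} part (4) gives $\Free_{\cat{K}} X \in \cat{K}$. Let $f \colon \Term_{\Sigma} X \to A$ be the homomorphic extension of $\id_X$ (surjective because $X = A$) and let $\pi \colon \Term_{\Sigma} X \to \Free_{\cat{K}} X$ be the canonical projection. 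The crux is to realize $A$ as a $\cat{V}$-quotient of $\Free_{\cat{K}} X$ by establishing the key inequality $d(f(s), f(t)) \le d(\pi(s), \pi(t))$ for all $s, t \in \Term_{\Sigma} X$, exactly as in Theorem~\ref{hsp-theorem}: if $d(\pi(s), \pi(t)) \le \e$ then $\Free_{\cat{K}} X, \pi \models s =_{\e} t$, so by Lemma~\ref{lem:presentation} part (3) with empty $\Delta$ we get $\cat{K} \models s =_{\e} t$, i.e.\ $s =_{\e} t \in E$, and since $A \models E$ this yields $d(\sem{s}_{\id}, \sem{t}_{\id}) = d(f(s), f(t)) \le \e$. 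In the reversed pointwise order this inequality says $\ker(\pi) \cle \ker(f)$, so Proposition~\ref{first-isom-thm} produces a (surjective, since $f$ is) homomorphism $\bar{f} \colon \Free_{\cat{K}} X \to A$; thus $(A, \bar{f})$ is a quotient of $\Free_{\cat{K}} X \in \cat{K} \subseteq \cat{V}$.

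The one genuine departure from the absolute case — and the point where I expect the argument could slip — is that closure under quotients is only available for quotients whose target already lies in $\cat{V}$. This is precisely why the membership $A \in \cat{V}$, carried along from $A \in \cat{V}(E)$, must be invoked at the very last step: it is what certifies $(A, \bar{f})$ as a $\cat{V}$-quotient and lets us conclude $A \in \cat{K}$. I would therefore be careful never to appeal to closure under arbitrary quotients. No completeness, ultraproduct, or compactness machinery is needed, so once this relative bookkeeping is set up the remainder is routine.
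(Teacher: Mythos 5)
Your proof is correct, but your ``if'' direction takes a genuinely different route from the paper's. The paper does not re-run the free-algebra argument at all: it forms the class $\bbH(\cat{K})$ of all quotients of members of $\cat{K}$, uses Proposition~\ref{classop-comp} (the commutation laws $\bbS\bbH \subset \bbH\bbS$ and $\bbP\bbH \subset \bbH\bbP$) to conclude that $\bbH(\cat{K})$ is closed under quotients, subalgebras and products, applies the absolute Theorem~\ref{hsp-theorem} to it as a black box to get a set $E$ of metric equations with $\cat{M}(E) = \bbH(\cat{K})$, and finishes by intersecting with $\cat{V}$: $\cat{V}(E) = \cat{M}(E) \cap \cat{V} = \bbH(\cat{K}) \cap \cat{V} = \cat{K}$, where the last equality is precisely closure under $\cat{V}$-quotients. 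You instead replay the proof of Theorem~\ref{hsp-theorem} inside the relative setting: take $E$ to be the equational theory of $\cat{K}$, build $\Free_{\cat{K}} X \in \cat{K}$ over the underlying set of a given $A \in \cat{V}(E)$, establish $\ker(\pi) \cle \ker(f)$ via Lemma~\ref{lem:presentation}(3), and exhibit $A$ explicitly as a $\cat{V}$-quotient of $\Free_{\cat{K}} X$ through Proposition~\ref{first-isom-thm}. Both arguments are sound, and both invoke the membership $A \in \cat{V}$ at the same critical point, namely to certify the quotient as a $\cat{V}$-quotient. The paper's reduction is shorter and illustrates the utility of the class-operator calculus; yours is self-contained (it never needs Proposition~\ref{classop-comp}), makes the witnessing quotient concrete, and also spells out the ``only if'' direction, which the paper leaves implicit.
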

\begin{proof}
  By Proposition~\ref{classop-comp}, $\bbH(\cat{K})$ is closed under quotients, subalgebras and products.
  Then by Theorem~\ref{hsp-theorem},
  there exists a set $E$ of metric equations
  such that $\cat{M}(E) = \bbH(\cat{K})$.
  Thus by assumption $\cat{M}(E) \cap \cat{V} = \bbH(\cat{K}) \cap \cat{V} = \cat{K}$,
  which concludes the proof.
\end{proof}

\begin{corollary}[\cite{Hino2016}]
  \label{strict-qvar}
  A class $\cat{K}$ of quantitative algebras is a strict variety
  relative to $\cat{Q}$
  if and only if $\cat{K}$ is closed under products, subalgebras and $\cat{Q}$-quotients.
\end{corollary}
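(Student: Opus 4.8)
The plan is to recognize this corollary as the specialization of the relative strict variety theorem (Theorem~\ref{hsp-theorem-rel}) to the case $\cat{V} = \cat{Q}$. That theorem already asserts exactly the desired equivalence for any prevariety $\cat{V}$, so the entire argument reduces to checking the single hypothesis it requires of $\cat{V}$, namely that $\cat{Q}$ is a prevariety: that $\cat{Q}$ is closed under subalgebras and products. This is stated in passing earlier in the paper (``$\cat{Q}$ and $\cat{M}$ are prevarieties''), but I would spell it out before invoking the theorem.

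Closure of $\cat{Q}$ under subalgebras is immediate: if $A \in \cat{Q}$ and $B$ is a subalgebra with the induced metric, then each $\sigma^B$ is the restriction of $\sigma^A$ and the metric on $B^n$ is the restriction of the supremum metric on $A^n$, so non-expansiveness of $\sigma^A$ transfers verbatim to $\sigma^B$. The only mildly substantive point is closure under products. Let $(A_i)_{i \in I}$ be quantitative algebras, and let $A = \prod_i A_i$ carry the pointwise operations and the supremum metric. Write a tuple $\vec{a} = (a^1, \ldots, a^n) \in A^n$ with each $a^k = (a^k_i)_i$, and let $\vec{a}_i = (a^1_i, \ldots, a^n_i) \in A_i^n$ denote its $i$-th component. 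Then for $\sigma \in \Sigma_n$,
\[
  d(\sigma(\vec{a}), \sigma(\vec{b}))
  = \sup_{i \in I} d_i(\sigma(\vec{a}_i), \sigma(\vec{b}_i))
  \le \sup_{i \in I} \max_{k} d_i(a^k_i, b^k_i)
  = \max_{k} \sup_{i \in I} d_i(a^k_i, b^k_i)
  = \max_{k} d(a^k, b^k),
\]
where the inequality is non-expansiveness of each $\sigma^{A_i}$, and the interchange of $\sup_i$ with $\max_k$ is legitimate precisely because the $\max$ ranges over the finite set $\{1, \ldots, n\}$. Hence $\sigma^A$ is non-expansive and $A \in \cat{Q}$, so $\cat{Q}$ is a prevariety.

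With that in hand I would simply invoke Theorem~\ref{hsp-theorem-rel} with $\cat{V} = \cat{Q}$: a class $\cat{K} \subset \cat{Q}$ is a strict variety relative to $\cat{Q}$ if and only if $\cat{K}$ is closed under products, subalgebras and $\cat{Q}$-quotients, which is the statement of the corollary. I do not anticipate any genuine obstacle here; the result is a transparent instantiation of the general relative theorem, and the only step carrying any content is the interchange of $\sup$ and $\max$ in the product computation above, which is exactly the compatibility of non-expansiveness with the supremum metric.
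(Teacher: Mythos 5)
Your proposal is correct and matches the paper's own treatment: the corollary is exactly the instantiation of Theorem~\ref{hsp-theorem-rel} with $\cat{V} = \cat{Q}$, using the fact that $\cat{Q}$ is a prevariety (which the paper asserts without proof and you verify explicitly, correctly, via the interchange of $\sup$ and finite $\max$). No gap; your verification of closure of $\cat{Q}$ under subalgebras and products is a reasonable detail to spell out, but it adds nothing structurally different from the paper's argument.
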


\subsection{Continuous Variety Theorem}
\label{sec:cont-variety}

We give a characterization of classes defined by basic quantitative inferences.

Mardare et al.\ give a solution for this characterization problem in \cite{Mardare2017}.

\begin{theorem}[\cite{Mardare2017}]
  For a cardinal $\kappa \le \aleph_1$,
  a class of metric algebras is defined by a set of $\kappa$-basic quantitative inferences
  if and only if it is closed under subalgebras, products and $\kappa$-reflexive quotients.
\end{theorem}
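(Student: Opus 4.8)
The plan is to prove the two implications separately, following the pattern of the strict variety theorem (Theorem~\ref{hsp-theorem}) but replacing \emph{quotients} by \emph{$\kappa$-reflexive quotients} throughout. For necessity, suppose $\cat{K} = \cat{M}(\Psi)$ for a set $\Psi$ of $\kappa$-basic quantitative inferences. Closure under subalgebras and products is immediate from a $\kappa$-indexed reading of Proposition~\ref{basic-closure}(1). For closure under $\kappa$-reflexive quotients, let $p \colon B \epi C$ be $\kappa$-reflexive with $B \in \cat{K}$, and let $\phi \equiv \bigwedge_{j < \lambda} x_j =_{\e_j} y_j \to s =_\e t$ lie in $\Psi$ with $\lambda < \kappa$. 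Given a valuation $w \colon X \to C$ of the variables with $C, w \models x_j =_{\e_j} y_j$ for all $j$, the set $C'$ of values of $w$ on the $<\kappa$ variables occurring in $\phi$ satisfies $C' \subset_\kappa C$; by $\kappa$-reflexivity it lifts to an isometric copy $B' \subset_\kappa B$, and composing with $(p|_{B'})^{-1}$ gives $w' \colon X \to B$ with $p \circ w' = w$ on these variables and $d^B(w'(x_j), w'(y_j)) = d^C(w(x_j), w(y_j)) \le \e_j$. Since $B \models \phi$ we get $d^B(\sem{s}_{w'}, \sem{t}_{w'}) \le \e$, and applying the non-expansive homomorphism $p$ yields $d^C(\sem{s}_w, \sem{t}_w) \le \e$. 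Thus $C \models \phi$; note that it is the \emph{isometric} (not merely homomorphic) nature of the lift that keeps the assumption-distances intact.

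For sufficiency, assume $\cat{K}$ is closed under subalgebras, products and $\kappa$-reflexive quotients, and let $\Phi$ be the set of all $\kappa$-basic quantitative inferences valid in $\cat{K}$. The inclusion $\cat{K} \subset \cat{M}(\Phi)$ is trivial, so the task is $\cat{M}(\Phi) \subset \cat{K}$. Fix $A \models \Phi$ and form the free algebra $F = \Free_{\cat{K}}(A, d^A)$ over the metric space $(A, d^A)$, presented by $\Delta = \set{a =_\e b}{a, b \in A,\, d^A(a,b) \le \e}$; since $\cat{K}$ is a prevariety, $F \in \cat{K}$ by Lemma~\ref{lem:presentation}(4), with non-expansive unit $\eta \colon A \to F$. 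Writing $f \colon \Term_\Sigma A \to A$ for the identity extension and $\pi \colon \Term_\Sigma A \to F$ for the projection, by Proposition~\ref{first-isom-thm} the induced surjection $e \colon F \epi A$ with $e \circ \eta = \id_A$ exists precisely when the \emph{factoring inequality} $d^A(f(s), f(t)) \le d^F(\pi(s), \pi(t))$ holds for all terms $s, t$. Crucially, \emph{once $e$ exists the rest is automatic}: the bound $d^F(\eta(a),\eta(b)) \le d^A(a,b)$ holds because $\eta \models \Delta$, while $\ge$ holds because $e$ is non-expansive, so $\eta$ is an isometric metric-space section of $e$. Hence $e$ is \emph{reflexive} in the sense of Definition~\ref{def:refl-homom}, and a reflexive quotient is a fortiori $\kappa$-reflexive; closure under $\kappa$-reflexive quotients then delivers $A \in \cat{K}$. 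So the entire weight of the theorem rests on the factoring inequality.

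The clean half of the factoring is a \emph{size reduction}. Since $s, t$ are finite terms, they involve only finitely many generators, lying in some countable $C \subset A$; form $F_C = \Free_{\cat{K}}(C, d^A\restriction C) \in \cat{K}$ with projection $\pi_C$, and set $\e = d^{F_C}(\pi_C(s), \pi_C(t))$. Then $F_C, \eta_C \models s =_\e t$, so by Lemma~\ref{lem:presentation}(3) we have $\Delta_C \models_{\cat{K}} s =_\e t$ where $\Delta_C = \Delta \restriction C$ is \emph{countable}, i.e.\ has $<\kappa$ assumptions between variables. Thus $\bigwedge \Delta_C \to s =_\e t$ is a valid $\kappa$-basic quantitative inference of $\cat{K}$, hence belongs to $\Phi$; evaluating it in $A$ under the inclusion valuation $C \hookrightarrow A$, where every assumption holds by construction, gives $d^A(f(s), f(t)) \le \e = d^{F_C}(\pi_C(s), \pi_C(t))$. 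In short, factoring is unproblematic for every \emph{countable} presentation, precisely because a countable presentation is already $\kappa$-basic.

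The main obstacle is globalizing this. The natural comparison runs the wrong way: the canonical non-expansive map $F_C \to F$ only yields $d^F(\pi(s), \pi(t)) \le d^{F_C}(\pi_C(s), \pi_C(t))$, so the countable bound gives $d^A(f(s),f(t)) \le d^{F_C}(\pi_C(s),\pi_C(t))$ rather than the sought $\le d^F(\pi(s),\pi(t))$. Closing the gap amounts to the \emph{metric downward Löwenheim–Skolem property} $d^F(\pi(s),\pi(t)) = \inf_{C} d^{F_C}(\pi_C(s),\pi_C(t))$ over countable $C$ — equivalently, that $F$ is the directed colimit of its countably generated free subalgebras. With continuity (closure under ultraproducts) one could read this off the weak compactness theorem (Theorem~\ref{thm:weak-compactness}), but continuity is \emph{not} among the present hypotheses, so that route is closed. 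I would therefore prove the colimit/approximation statement intrinsically, patching the partial sections $\eta_C$ of the $e_C \colon F_C \epi \langle C\rangle$ across the directed system of countable $C$ into a global section of $e$; controlling the congruential closure of a countable relation set with no Lipschitz bound available (contrast the chain formula of Theorem~\ref{thm:cong-join}) is the delicate point, and it is exactly here that the calibration $\kappa \le \aleph_1$ — making countable approximations both admissible as assumptions and sufficient to exhaust finite terms — does its work.
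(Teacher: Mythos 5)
A preliminary remark for context: the paper does not prove this statement at all --- it is quoted from \cite{Mardare2017} --- and the paper's own analogue (the continuous variety theorem) pushes the hard step through with the weak compactness theorem (Theorem~\ref{thm:weak-compactness}), which is unavailable here because closure under ultraproducts is not among the hypotheses, a point you correctly recognize. Judged on its own, your attempt has the right frame: the necessity direction is correct, and so is the reduction of sufficiency to the factoring inequality $d^A(f(s),f(t)) \le d^F(\pi(s),\pi(t))$ for $F=\Free_{\cat{K}}(A,d^A)$. Indeed, by Proposition~\ref{first-isom-thm} that inequality is equivalent to the existence of $e\colon F \epi A$ with $e\circ\eta=\id_A$, such an $e$ is automatically reflexive in the sense of Definition~\ref{def:refl-homom} with metric section $\eta$, and a reflexive quotient is $\kappa$-reflexive, so $A\in\cat{K}$ would follow.

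The genuine gap is that the factoring inequality is never proved, and you say so yourself. Your size reduction only yields $d^A(f(s),f(t)) \le \inf_C d^{F_C}(\pi_C(s),\pi_C(t))$ over small $C$, and since the canonical homomorphisms $F_C \to F$ are merely non-expansive, this infimum sits \emph{above} $d^F(\pi(s),\pi(t))$, exactly the wrong way. Closing the gap along your route amounts to $\theta_\Delta = \inf_C \theta_{\Delta_C}$, i.e.\ to showing that the pointwise infimum of this directed family --- its join $\bigcjoin_C \theta_{\Delta_C}$ in $\Con(\Term_\Sigma A)$ --- is itself $\bbS(\cat{K})$-congruential. This is precisely the situation the paper's congruence theory cannot reach: joins are tractable only under a Lipschitz hypothesis (Theorem~\ref{thm:cong-join}), and quotients by joins do not embed into products of the quotients (that device works for meets, Corollary~\ref{cor:cong-complat}), so closure under $\bbS$ and $\bbP$ gives nothing here. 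Your proposed repair --- patching the sections $\eta_C$ of the maps $e_C \colon F_C \epi \langle C\rangle$ across the directed system of countable $C$ --- is a statement of intent rather than an argument: the $\eta_C$ are sections of \emph{different} maps into \emph{different} algebras, the transition maps $F_C \to F_{C'}$ are only non-expansive and need not carry $\eta_C$ to $\eta_{C'}$, and prevarieties are not closed under the directed colimits such patching would produce. Note finally that your plan uses closure under $\kappa$-reflexive quotients only in its last line, where plain reflexive quotients suffice; if the approximation step could really be established for an arbitrary prevariety, the theorem would follow with ``reflexive'' in place of ``$\kappa$-reflexive'' and the characterization would not depend on $\kappa$ at all. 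That the cited result is calibrated by $\kappa$, and that this paper needs ultraproduct closure to carry out the corresponding step, is strong evidence that the missing step requires the hypothesis your argument leaves idle. (Two smaller repairs: $\Delta\restriction C$ is countable only after pruning to one equation $a=_{d^A(a,b)}b$ per pair, and for $\kappa=\aleph_0$ your countable $C$ must be replaced by the finite set of generators of $s,t$.)
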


Differently from their result, our goal is to prove the continuous version.
In this case, the size condition is included in the continuity assumption.

\begin{theorem}[Continuous variety theorem]
  A class $\cat{K}$ of metric algebras is defined by a continuous family of basic quantitative inferences
  if and only if it is closed under subalgebras, products, reflexive quotients and ultraproducts.
\end{theorem}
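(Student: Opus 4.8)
For this direction the plan is purely to assemble closure properties already established. Suppose $\cat{K} = \cat{M}(\Phi)$ for a continuous family $\Phi$ of basic quantitative inferences. For each single basic quantitative inference $\phi$, Proposition~\ref{basic-closure} gives that $\cat{M}(\phi)$ is closed under subalgebras, products and reflexive quotients, and all three properties are inherited by the intersection $\cat{K} = \bigcap_{\phi \in \Phi} \cat{M}(\phi)$. Closure under ultraproducts is exactly the content of Lemma~\ref{lem:cont-cont} applied with $\cat{V} = \cat{M}$, since every member of $\Phi$ is in particular a metric implication and $\Phi$ is a continuous family; thus $\cat{K}$ enjoys all four closure properties.

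\textbf{Reduction for the converse.} Now assume $\cat{K}$ is closed under subalgebras, products, reflexive quotients and ultraproducts, and let $\Phi_{\cat{K}}$ be the set of all basic quantitative inferences valid in $\cat{K}$. I would first check that $\Phi_{\cat{K}}$ is a continuous family: the formulas $\vec{x} =_0 \vec{y} \to \sigma(\vec{x}) =_0 \sigma(\vec{y})$ hold in every metric algebra (a metric separates points, so $d(x_k, y_k) = 0$ forces $x_k = y_k$), hence belong to $\Phi_{\cat{K}}$; and the perturbation clause is precisely the equicontinuity supplied by Proposition~\ref{cor:cont-class-equicont}, which applies because $\cat{K}$ is continuous. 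Since $\cat{K} \subseteq \cat{M}(\Phi_{\cat{K}})$ is immediate, everything reduces to proving $\cat{M}(\Phi_{\cat{K}}) \subseteq \cat{K}$.

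\textbf{Heart of the converse.} Fix $A \in \cat{M}(\Phi_{\cat{K}})$ and present its underlying metric space as $(X, \Delta)$ with $X = A$ and $\Delta = \set{x =_\e y}{d^A(x, y) \le \e}$, then form $F = \Free_{\cat{K}}(X, \Delta)$, which lies in $\cat{K}$ by Lemma~\ref{lem:presentation}(4) as $\cat{K}$ is a prevariety. Let $f \colon \Term_\Sigma X \to A$ be the homomorphic extension of $\id_X$. The crucial step is to prove $\theta_\Delta \cle \ker(f)$, i.e.\ $d^A(f(s), f(t)) \le \theta_\Delta(s, t)$ for all terms $s, t$. Here I would argue: if $\theta_\Delta(s, t) = d^F(\sem{s}_\eta, \sem{t}_\eta) \le \e$, then by the corollary to the weak compactness theorem (Theorem~\ref{thm:weak-compactness}, which needs that $\cat{K}$ is continuous) every $\e' > \e$ admits a finite $\Delta_0 \subset \Delta$ with $\Delta_0 \models_{\cat{K}} s =_{\e'} t$; because the equations of $\Delta_0$ are between variables, this is a basic quantitative inference lying in $\Phi_{\cat{K}}$, so $A$ satisfies it. Evaluating along $\id_X$ — under which every equation of $\Delta_0$ holds, by the very definition of $\Delta$ — yields $d^A(f(s), f(t)) \le \e'$, and letting $\e' \to \e$ proves the claim. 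By Proposition~\ref{first-isom-thm}, $f$ then factors through a surjective homomorphism $h \colon F \to A$ with $h \circ \eta = \id_A$. Finally $\eta$ is an isometric embedding: $d^F(\eta(x), \eta(y)) \le d^A(x, y)$ because $\Delta$ contains $x =_\e y$ for $\e = d^A(x, y)$, while non-expansiveness of $h$ gives $d^A(x, y) = d^A(h\eta(x), h\eta(y)) \le d^F(\eta(x), \eta(y))$. Thus $\eta$ is a metric section of $h$, so $h$ exhibits $A$ as a reflexive quotient of $F \in \cat{K}$, and closure under reflexive quotients delivers $A \in \cat{K}$.

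\textbf{The main obstacle.} The decisive point will be the inequality $\theta_\Delta \cle \ker(f)$, where all four hypotheses must interlock: building $F$ uses closure under subalgebras and products; the finite-subset description of $\theta_\Delta$ needs the weak compactness theorem and hence closure under ultraproducts; the passage from ``$\Delta_0 \models_{\cat{K}} s =_{\e'} t$'' to membership in $\Phi_{\cat{K}}$ depends on the assumptions in $\Delta$ being equations \emph{between variables}, which is exactly what keeps these formulas \emph{basic} quantitative inferences rather than arbitrary metric implications; and reflexivity of $h$, not mere surjectivity, is what licenses the final closure step. The subtle structural observation carrying the argument is that $\eta$ is genuinely isometric, so the metric of $A$ is recovered inside $F$ and $h$ becomes reflexive.
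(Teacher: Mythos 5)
Your proposal is correct and follows essentially the same route as the paper: both prove the converse by forming $\Free_{\cat{K}}(A, d^A)$ over the underlying metric space of $A \in \cat{M}(\Phi_{\cat{K}})$, use Lemma~\ref{lem:presentation}(3) together with the weak compactness theorem (Theorem~\ref{thm:weak-compactness}) to extract a finite $\Delta_0$ turning the entailment into a basic quantitative inference that $A$ itself satisfies, conclude via the first isomorphism theorem that $A$ is a reflexive quotient of the free algebra, and establish continuity of the defining family through Proposition~\ref{cor:cont-class-equicont} (the paper routes this last point through Proposition~\ref{equiv-conti-quasi} at the end rather than at the start, a purely cosmetic difference).
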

\begin{proof}
  (only if) Directly follows from Proposition~\ref{basic-closure} and Lemma~\ref{lem:cont-cont}.

  (if)
  Let $E$ be the set of basic quantitative inferences that hold in $\cat{K}$.
  We show $\cat{M}(E) \subset \cat{K}$.
  Let $A$ be a metric algebra that satisfies $E$,
  and $(A, d)$ be its underlying metric space.
  Let $f \colon \Term_{\Sigma} A \to A$ be the homomorphic extension of the identity map
  and $\pi \colon \Term_{\Sigma} A \to \Free_{\cat{K}} (A, d)$ be the canonical projection.
  It suffices to show that (1) $d(f(s), f(t)) \le d(\pi(s), \pi(t))$ for any $s, t \in \Term_\Sigma A$,
  and (2) $d(f(a), f(b)) = d(\pi(a), \pi(b))$ for any $a, b \in A \subset \Term_\Sigma A$.

  (1) Assume $d(\pi(s), \pi(t)) \le \e$, that is, $\Free_{\cat{K}} (A, d), \pi \models s =_\e t$.
  By Lemma~\ref{lem:presentation} (3), we have $\Delta \models_\cat{K} s =_\e t$,
  where $\Delta = \set{a =_{\delta} b}{a, b \in A,\, d(a, b) \le \delta}$.
  Given $\e' \gt \e$, by Theorem~\ref{thm:weak-compactness} there exists a finite subset $\Delta_0 \subset \Delta$
  such that $\Delta_0 \models_\cat{K} s =_{\e'} t$.
  Since $A$ satisfies all quantitative basic inferences that holds in $\cat{K}$,
  we have $A \models \bigwedge \Delta_0 \to s =_{\e'} t$.
  Since $A, f \models \Delta_0$ by the definition of $\Delta_0 \subset \Delta$,
  we have $A, f \models s =_{\e'} t$, that is, $d(f(s), f(t)) \le \e'$.
  Letting $\e' \to \e$, we conclude $d(f(s), f(t)) \le \e$.

  (2) We only have to show that $d(f(a), f(b)) \ge d(\pi(a), \pi(b))$. Let us assume $d(f(a), f(b)) \le \e$.
  Since $f$ is an identity on $A$, we have $d(a, b) \le \e$. This means $a =_{\e} b \in \Delta$
  and then $d(\pi(a), \pi(b)) \le \e$.

  Therefore the class $\cat{K}$ is defined by $E$, hence a quasivariety.
  By Proposition~\ref{equiv-conti-quasi}, the family $E$ is moreover continuous,
  which concludes the proof.
\end{proof}

\begin{corollary}
  A class $\cat{K}$ of quantitative algebras is defined by a continuous family of
  basic quantitative inferences in $\cat{Q}$
  if and only if $\cat{K}$ is closed under products, subalgebras, ultraproducts and
  reflexive $\cat{Q}$-quotients.
\end{corollary}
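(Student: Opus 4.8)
The plan is to reduce the statement to the continuous variety theorem exactly as Theorem~\ref{hsp-theorem-rel} reduces the relative strict variety theorem to Theorem~\ref{hsp-theorem}, replacing the quotient operator $\bbH$ by the reflexive-quotient operator $\bbH_r$ and carrying ultraproducts along in the list of closures. For the \emph{only if} direction, write $\cat{K} = \cat{Q}(E) = \cat{Q} \cap \cat{M}(E)$ for a continuous family $E$ of basic quantitative inferences. By Proposition~\ref{basic-closure} and Lemma~\ref{lem:cont-cont} the class $\cat{M}(E)$ is closed under subalgebras, products, reflexive quotients and ultraproducts, while $\cat{Q}$ is a prevariety whose non-expansive operations make it closed under ultraproducts as well; intersecting, $\cat{K}$ inherits closure under subalgebras, products and ultraproducts. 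For reflexive $\cat{Q}$-quotients I would argue directly: if $B$ is a reflexive quotient of some $A \in \cat{K}$ with $B \in \cat{Q}$, then $B \in \cat{M}(E)$ by the reflexive-quotient closure of $\cat{M}(E)$ and $B \in \cat{Q}$ by hypothesis, so $B \in \cat{K}$.

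For the \emph{if} direction I would set $\cat{L} = \bbH_r(\cat{K})$ and verify the hypotheses of the continuous variety theorem for $\cat{L}$. Because $\cat{K}$ is closed under subalgebras, products and ultraproducts—and conversely contains each of its members as a one-element subalgebra, a one-index product, and a principal ultrapower—we have $\bbS(\cat{K}) = \bbP(\cat{K}) = \bbP_U(\cat{K}) = \cat{K}$. The commutation relations $\bbS\bbH_r \subset \bbH_r\bbS$, $\bbP\bbH_r \subset \bbH_r\bbP$ and $\bbP_U\bbH_r \subset \bbH_r\bbP_U$ of Proposition~\ref{classop-comp} then immediately give $\bbS(\cat{L}), \bbP(\cat{L}), \bbP_U(\cat{L}) \subset \cat{L}$. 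Closure of $\cat{L}$ under reflexive quotients is the idempotency $\bbH_r\bbH_r \subset \bbH_r$, which I would check by noting that the composite of two reflexive quotients is again reflexive: compose the surjective homomorphisms and compose their metric sections (a composite of metric embeddings is a metric embedding, and the section identities compose). By the continuous variety theorem, $\cat{L} = \cat{M}(E)$ for some continuous family $E$ of basic quantitative inferences.

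It then remains to identify $\cat{K}$ with $\cat{M}(E) \cap \cat{Q} = \cat{Q}(E)$. The inclusion $\cat{K} \subset \bbH_r(\cat{K}) \cap \cat{Q}$ is immediate. Conversely, if $A \in \bbH_r(\cat{K}) \cap \cat{Q}$ then $A$ is a reflexive quotient of some $B \in \cat{K}$ and $A \in \cat{Q}$, hence a reflexive $\cat{Q}$-quotient of $B$, so $A \in \cat{K}$ by hypothesis; therefore $\cat{K} = \bbH_r(\cat{K}) \cap \cat{Q} = \cat{M}(E) \cap \cat{Q} = \cat{Q}(E)$. I expect the only genuine content to lie in this bookkeeping—specifically in confirming the idempotency of $\bbH_r$ and in recognizing that being \emph{closed under reflexive $\cat{Q}$-quotients} is precisely the hypothesis that makes $\cat{K} = \bbH_r(\cat{K}) \cap \cat{Q}$ hold—while everything else is a direct transcription of the relative strict variety argument with $\bbH$ replaced by $\bbH_r$ and ultraproducts added to the tracked closures.
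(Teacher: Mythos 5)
Your proposal is correct and is essentially the paper's own argument: the paper proves this corollary by the same relativization used for Theorem~\ref{hsp-theorem-rel} and Corollary~\ref{strict-qvar}, namely applying the absolute (continuous) variety theorem to $\bbH_r(\cat{K})$ via the commutation relations of Proposition~\ref{classop-comp} and then intersecting with $\cat{Q}$ using the hypothesis of closure under reflexive $\cat{Q}$-quotients. Your extra bookkeeping (idempotency of $\bbH_r$ and the identification $\cat{K} = \bbH_r(\cat{K}) \cap \cat{Q}$) is exactly the detail the paper leaves implicit, so the two proofs coincide.
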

\begin{proof}
  Exactly the same as Corollary~\ref{strict-qvar}.
\end{proof}

\section{Conclusions and Future Work}
\label{ch:conclusions}

We developed a general theory of metric and quantitative algebra
from the viewpoint of universal algebra.
We investigated the lattices of congruential pseudometrics
on a metric algebra, and proved the metric variants of the variety theorem
by using their structure.

Our work is different from \cite{Mardare2017} because
we aim at continuous classes of metric and quantitative algebras,
following the work by Weaver \cite{Weaver1995} and Khudyakov \cite{Khudyakov2003}.
This design choice seems to be natural since the continuity of classes of metric algebras
can be understood as a sort of closedness in the topological sense, hence a sort of robustness.
Moreover our result is mainly on general metric algebras rather than quantitative algebras,
which enables our theory to include examples from functional analysis and operator algebra.

We did not pursue the connection to the category theoretic treatments
of universal algebra: Lawvere theory, monad and orthogonality.

The theory of quantitative algebra can be viewed as a special case of enriched Lawvere theory.
More specifically, it is the discrete Lawvere theory \cite{HylandP2006} enriched by the category of metric spaces.
Here the adjective \emph{discrete} means that we only consider operations
whose arities are natural numbers, while in enriched Lawvere theory
an operation whose arity is a finite metric space is allowed.
It would be possible to give a syntax and prove the variety theorem for that situation.

The use of monads and Eilenberg-Moore categories is another way
to deal with equational theories in category theory.
Mardare et al.\ showed that a class of quantitative algebras defined by basic quantitative inferences
induce a monad on the category of metric spaces.
The next problem is whether the class of quantitative algebras is monadic.

It would also be interesting to check whether our work is an instance of
the categorical variety theorem formulated by Ad\'{a}mek et al.\ in \cite{JoyOfCats}.
Our theory seems to implicitly use the orthogonal factorization system on the category of metric algebras
that consists of embeddings and quotients.
But there is another factorization system: closed embeddings and dense maps.
The natural question is what kind of variety theorems is acquired
if we use this factorization system instead of embeddings and quotients.

The metric structures on free algebras are also yet to be investigated.
For example, we could investigate whether the free algebra on a metric space is complete, or compact
for a given axiom of metric algebras.




\bibliographystyle{plain}
\bibliography{ref}

\end{document}